\newtheorem{Th}{Theorem}[section]
\newtheorem{Rem}[Th]{Remark}
\newtheorem{Ex}[Th]{Example}
\newtheorem{Lemma}[Th]{Lemma}
\newtheorem{Def}[Th]{Definition}
\newtheorem{Prop}[Th]{Proposition}
\renewcommand{\section}%
   {\setcounter{equation}{0}\@startsection {section}{1}{\z@}{-3.5ex plus -1ex
  minus -.2ex}{2.3ex plus .2ex}{\Large\bf}}
\def\supp{\mathop{\rm supp}\nolimits}
\def\Im{\mathop{\rm Im}\nolimits}
\def\Wig{\mathop{\rm Wig}\nolimits}
\def\indlim{\mathop{\rm ind\,lim}}
\def\ds{\displaystyle}
\def\R{\mathbb R}
\def\C{\mathbb C}
\def\N{\mathbb N}
\newcommand{\D}{\mathcal{D}}
\newcommand{\E}{\mathcal{E}}
\newcommand{\F}{\mathcal{F}}
\newcommand{\Sch}{\mathcal{S}}
\newcommand{\afrac}[2]{\genfrac{}{}{0pt}{1}{#1}{#2}}
\newcommand{\beqsn}{\arraycolsep1.5pt\begin{eqnarray*}}
\newcommand{\eeqsn}{\end{eqnarray*}\arraycolsep5pt}
\newcommand{\beqs}{\arraycolsep1.5pt\begin{eqnarray}}
\newcommand{\eeqs}{\end{eqnarray}\arraycolsep5pt}
\title{Regularity of partial differential operators in ultradifferentiable
  spaces and Wigner type transforms}
\author[Boiti]{Chiara Boiti}
\address{
Dipartimento di Matematica e Informatica \\Universit\`a di Ferrara\\
Via Ma\-chia\-vel\-li n.~30\\
I-44121 Ferrara\\
Italy}
\email{chiara.boiti@unife.it}
\author[Jornet]{David Jornet}
\address{
Instituto Universitario de Matem\'atica Pura y Aplicada IUMPA\\
Universitat Po\-li\-t\`ecni\-ca de Val\`encia\\
Camino de Vera, s/n\\
E-46071 Valencia\\
Spain}
\email{djornet@mat.upv.es}
\author[Oliaro]{Alessandro Oliaro}
\address{Dipartimento di Matematica\\ Universit\`a di Torino\\
  Via Carlo Alberto n. 10\\ I-10123 Torino\\ Italy}
  \email{alessandro.oliaro@unito.it}
\begin{document}

\keywords{Regularity, linear partial differential operators with
  polynomial coefficients, Schwartz spaces, Wigner transform}
\subjclass[2010]{Primary 35B65; Secondary 47F05, 46F05 }

\begin{abstract}
  We study the behaviour of linear partial differential operators with
  polynomial coefficients via
  a Wigner type transform.
  In particular, we obtain some results of regularity
  in the Schwartz space $\Sch$ and in the space $\Sch_\omega$ as introduced by Bj\"orck for
  weight functions
  $\omega$. Several examples are discussed in this new setting.
\end{abstract}

\maketitle
% computing hour and minutes
%  \newcount\minutes
%  \newcount\hour
%  \minutes=\time
%  \divide\minutes by 60
%  \hour=\minutes
%  \minutes=\time
%  \multiply \hour by 60
%  \advance \minutes by -\hour
%  \divide \hour by 60
%  \markboth{\today\space\number\hour :\number\minutes}%
%  {\today\space\number\hour :\number\minutes}
\markboth{\sc Regularity of partial differential operators \ldots}
{\sc C.~Boiti, D.~Jornet and A.~Oliaro}

  \section{Introduction}

In this paper we are concerned with the regularity of linear partial
differential operators with polynomial coefficients. This problem was
introduced by Shubin \cite{S}, who says that a linear operator
$A:\mathcal{S}^\prime\to\mathcal{S}^\prime$ is regular if the conditions
$u\in\mathcal{S}^\prime$, $Au\in\mathcal{S}$ imply that $u\in\mathcal{S}$.
In \cite[Chapter IV]{S} global pseudodifferential operators on $\mathbb{R}^n$
are studied, giving a notion of (global) hypoellipticity (see formula (\ref{g-hypoelliptic})), that implies the
above mentioned regularity in Schwartz spaces. Such global pseudodifferential
operators are defined by treating in the same way variables and covariables,
and have as basic examples linear partial differential operators with polynomial
coefficients. The global hypoellipticity, on the other hand, is far from
being a necessary condition for the regularity of an operator; some results
have been obtained in this direction, we refer in particular to \cite{W} who
proved the regularity of the Twisted Laplacian (a non hypoelliptic operator
in two variables), and to \cite{NR}, who gave a characterization of the
regularity of ordinary differential operators in the case when the roots
of the corresponding Weyl symbol are suitably separated at infinity.
Moreover, in \cite{BO} a class of non hypoelliptic regular partial
differential operators with polynomial coefficients have been found,
by using a technique related to transformations of Wigner type; such
class includes as a particular case the Twisted Laplacian. The idea to use quadratic transformations for the study of general properties of partial differential equations (that underlies \cite{BO}, as well as the present paper) goes back to some works related to engineering applications, cf. \cite{CG}, \cite{GC}, where the main aim is to understand the Wigner transform of the solution of a partial differential equation without finding the solution itself; the ideas of \cite{CG}, \cite{GC} are developed and organically presented in \cite{C}. In the present paper we study the regularity of linear partial differential operators, in the spirit of \cite{BO}, developing the research in two directions; first, we
consider a general representation in the Cohen class, defined as
$$
Q[w]:=\sigma * \Wig[w]
$$
for a kernel $\sigma\in\mathcal{S}^\prime$, where $\Wig[w]$ is the Wigner
transform, defined as
$$
\Wig[w](x,y):=\int e^{-ity} w\left( x+\frac{1}{2}t,x-\frac{1}{2}t\right)\,dt.
$$
The idea is that a linear partial differential operator $B$ with polynomial coefficients is
transformed into another one by a formula of the kind
$$
Q[Bw]=\tilde{B} Q[w];
$$
moreover, under suitable hypotheses on the kernel $\sigma$, the
regularity is preserved by such transformation, and if we start from a
global hypoelliptic operator $B$ we find in general a non-global
hypoelliptic operator $\tilde{B}$. Then, we can construct a large class
of partial differential operators that are regular but not globally
hypoelliptic. We also study regularity and the results just mentioned for the class $\mathcal{S}_\omega$ for a weight function $\omega$, as introduced by Bj\"orck~\cite{B} (see also \cite{F} for non subadditive weight functions), which gives a large scale of examples, working in particular for Gevrey weight functions.
This requires a preliminary study of the Schwartz ultradifferentiable
space $\mathcal{S}_\omega$ and of
the Cohen class representation $Q$
in $\mathcal{S}_\omega$
and $\mathcal{S}^\prime_\omega$.
In particular, we give a characterization of the spaces $\Sch_\omega$,
improving a result of \cite{CKL},  introducing a new kind of seminorms in the spirit of the spaces of ultradifferentiable functions introduced by Braun, Meise and Taylor~\cite{BMT} (compare with Langenbruch~\cite{L}).

The examples that we can construct with our
technique are quite general, we mention here some cases. We show for example
that, if $b$ is a polynomial in one variable that never vanishes, and
$P(D_x,D_y)$ is an arbitrary partial differential operator with constant
real coefficients, then the operator
$$
b(x+P(D_x,D_y))
$$
in $\mathbb{R}^2$ is regular in the sense of Shubin and in the sense of
ultradifferentiable classes $\mathcal{S}_\omega$. The same is true for
the operator in two variables
$$
(x-D_y+Q(D_x))^2+(y+R(D_y))^2,
$$
for arbitrary ordinary differential operators $Q(D_x)$ and $R(D_y)$
with constant real coefficients. Observe in particular that the regularity
here does not depend on the higher order terms, since the operators
$P$, $Q$, $R$ can have arbitrary order.

The paper is organized as follows. Section \ref{sec2} is devoted to the
study of some properties of the Wigner transform in $\mathcal{S}$, that
we use in the following; in Sections \ref{sec3} and \ref{secSomega} we
study the global regularity through Cohen class representations in
$\mathcal{S}$ and $\mathcal{S}_\omega$, respectively; finally, in the
last section we analyze some examples. The results are proved in the
case of dimension $2$, for sake of simplicity, but they could
easily be generalized to higher even dimension.

\section{Some properties of the Wigner transform on $\Sch$}
\label{sec2}

Let us define, following \cite{BO}, the Wigner-like transform of a function
$w\in\Sch(\R^2)$, by
\beqs
\label{1}
\Wig[w](x,y):=\int e^{-ity}w\Big(x+\frac12 t,x-\frac12 t\Big)dt.
\eeqs
In this way
\beqsn
&&\Wig:\ \Sch\to\Sch\\
&&\Wig:\ \Sch'\to\Sch'
\eeqsn
is invertible, since it is the composition of a linear invertible change
of variables and a partial Fourier transform, i.e.
\beqs
\label{wigner-fourier}
\Wig[w](x,y)=\F_t\left(Tw(x,t)\right)(x,y)
\eeqs
with
\beqs
\label{wigner-comp-op}
Tw(x,t)=w\Big(x+\frac12 t,x-\frac12 t\Big),
\qquad
\F(f)(y)=\F_t(f(t))(y)=\int e^{-ity}f(t)dt.
\eeqs

Denote, as in \cite{BO},
\beqsn
&&M_1w(x,y)=xw(x,y),\qquad M_2w(x,y)=yw(x,y),\\
&&D_1w(x,y)=D_xw(x,y),\qquad D_2w(x,y)=D_yw(x,y),
\eeqsn
with $D_x=-i\partial_x$, $D_y=-i\partial_y$, and recall,
from \cite{BO}, the following properties:
\beqs
\label{3}
&&D_1\Wig[w]=\Wig[(D_1+D_2)w]\\
\label{4}
&&D_2\Wig[w]=\Wig[(M_2-M_1)w]\\
\label{5}
&&M_1\Wig[w]=\Wig\left[\frac12(M_2+M_1)w\right]\\
\label{6}
&&M_2\Wig[w]=\Wig\left[\frac12(D_1-D_2)w\right].
\eeqs

More generally,
let $P(D_1,D_2)=\sum_{|(h,k)|\leq m}a_{hk}D_x^h D_y^k$ be a linear partial
differential operator with constant coefficients and denote by
\beqs
\label{2}
P(D_1+D_2,M_2-M_1)=\sum_{|(h,k)|\leq m}a_{hk}(D_1+D_2)^h(M_2-M_1)^k,
\eeqs
which is a linear partial differential operator with polynomial coefficients.

Note that
\beqs
\label{11}
(D_1+D_2)^h(M_2-M_1)^k=(M_2-M_1)^k(D_1+D_2)^h
\eeqs
since
\beqsn
(D_1+D_2)(M_2-M_1)w=&&D_1M_2w-D_1M_1w+D_2M_2w-D_2M_1w\\
=&&M_2D_1w{+i}w-M_1D_1w{-i}w+M_2D_2w-M_1D_2w\\
=&&(M_2-M_1)D_1w+(M_2-M_1)D_2{w}\\
=&&(M_2-M_1)(D_1+D_2)w.
\eeqsn

We have the following
\begin{Lemma}
\label{lemma1}
Let $P(D_x,D_y)$ be a linear partial differential operator with constant
coefficients.
Then, for every $w\in\Sch(\R^2)$,
\beqs
\label{12}
P(D_1,D_2)\Wig[w](x,y)=\Wig[P(D_1+D_2,M_2-M_1)w](x,y).
\eeqs
\end{Lemma}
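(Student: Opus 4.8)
The plan is to reduce the statement for a general operator $P(D_1,D_2)$ to the four basic identities \eqref{3}--\eqref{6}, exploiting that $\Wig$ conjugates the operators $D_1,D_2,M_1,M_2$ acting on the image side to certain first-order operators acting on the source side, and that composition of operators on one side corresponds to composition of the conjugated operators on the other. Concretely, I would first record the two identities from \eqref{3}--\eqref{4} that we actually need, namely $D_1\Wig[w]=\Wig[(D_1+D_2)w]$ and $D_2\Wig[w]=\Wig[(M_2-M_1)w]$, valid for all $w\in\Sch(\R^2)$ (and note that $(D_1+D_2)w$, $(M_2-M_1)w$ are again in $\Sch(\R^2)$, so the identities may be iterated).

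The key step is an induction on the monomials $D_x^hD_y^k$. For this I would prove, by induction on $h$, that $D_1^h\Wig[w]=\Wig[(D_1+D_2)^hw]$: the base case $h=0$ is trivial and $h=1$ is \eqref{3}; for the inductive step write $D_1^{h+1}\Wig[w]=D_1\bigl(D_1^h\Wig[w]\bigr)=D_1\Wig[(D_1+D_2)^hw]=\Wig[(D_1+D_2)(D_1+D_2)^hw]=\Wig[(D_1+D_2)^{h+1}w]$, using \eqref{3} with $(D_1+D_2)^hw\in\Sch$ in the second-to-last equality. An identical induction on $k$ using \eqref{4} gives $D_2^k\Wig[w]=\Wig[(M_2-M_1)^kw]$. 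Combining the two, and using that $D_1$ and $D_2$ commute on the image side while $(D_1+D_2)$ and $(M_2-M_1)$ commute on the source side by \eqref{11}, I get
\beqsn
D_1^hD_2^k\Wig[w]=D_1^h\Wig[(M_2-M_1)^kw]=\Wig[(D_1+D_2)^h(M_2-M_1)^kw],
\eeqsn
which is exactly $\Wig[P(D_1+D_2,M_2-M_1)w]$ in the case of a single monomial with coefficient one. Finally, multiplying by $a_{hk}$ and summing over $|(h,k)|\le m$, using linearity of $\Wig$, yields \eqref{12} for the general operator $P(D_x,D_y)=\sum a_{hk}D_x^hD_y^k$.

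I do not expect a serious obstacle here; the only point requiring a little care is the bookkeeping of which variable's operator is being peeled off at each stage and the invocation of \eqref{11} to interchange the order of $(D_1+D_2)^h$ and $(M_2-M_1)^k$ so that the result matches the definition \eqref{2} of $P(D_1+D_2,M_2-M_1)$. One should also remark explicitly that each intermediate function stays in $\Sch(\R^2)$, so that the basic identities \eqref{3}--\eqref{4}, stated for $w\in\Sch(\R^2)$, legitimately apply at every step of the induction; since $D_1+D_2$ and $M_2-M_1$ map $\Sch(\R^2)$ into itself, this is immediate.
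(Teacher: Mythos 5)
Your proposal is correct and follows essentially the same route as the paper: an induction on the powers of $D_1$ and $D_2$ using the basic identities \eqref{3} and \eqref{4}, the commutation relation \eqref{11}, and linearity of $\Wig$. The only cosmetic difference is that you run two separate one-variable inductions and then compose, whereas the paper fixes $k$ and inducts on $h$ (absorbing the $D_2^k$ case into the base step); the substance is identical.
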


\begin{proof}
Let us prove by induction on $h\in\N_0:=\N\cup\{0\}$ that
\beqs
\label{7}
D_1^hD_2^k\Wig[w]=\Wig[(D_1+D_2)^h(M_2-M_1)^kw]
\eeqs
for all $k\in\N_0$.

Indeed, for $h=0$ by \eqref{4} we have that
\beqsn
D_2^k\Wig[w]=\Wig[(M_2-M_1)^kw].
\eeqsn
Let us assume \eqref{7} to be true for $h$ and prove it for $h+1$. By
the inductive assumption \eqref{7} and \eqref{3}:
\beqsn
D_1^{h+1}D_2^k\Wig[w]=&&D_1\Wig[(D_1+D_2)^h(M_2-M_1)^kw]\\
=&&\Wig[(D_1+D_2)^{h+1}(M_2-M_1)^kw].
\eeqsn
Moreover, since $\Wig[w]\in C^2(\R^2)$ we have that
\beqs
\label{9}
D_1^hD_2^k\Wig[w]=D_2^kD_1^h\Wig[w]=\Wig[(D_1+D_2)^h(M_2-M_1)^kw].
\eeqs

The thesis then follows from \eqref{9} and the definition of $P$.
\end{proof}

Analogous formulas hold for linear partial differential operators with
polynomial coefficients:
\begin{Prop}
\label{cor1}
Let $P(x,y,D_x,D_y)$ be a linear partial differential operator with
polynomial coefficients. Then, for all $w\in\Sch(\R^2)$, the
following formula holds:
\beqsn
P(M_1,M_2,D_1,D_2)\Wig[w]=
\Wig\left[P\left(\frac12(M_2+M_1),\frac12(D_1-D_2),D_1+D_2,M_2-M_1\right)
w\right].
\eeqsn
\end{Prop}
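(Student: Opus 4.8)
The plan is to reduce Proposition \ref{cor1} to Lemma \ref{lemma1} by handling the multiplication operators $M_1, M_2$ with the same inductive bookkeeping already used for $D_1, D_2$, and then combining both. First I would record the two ``multiplication'' analogues of \eqref{7}: namely that for all $a,b\in\N_0$,
\beqsn
M_1^aM_2^b\Wig[w]=\Wig\left[\Big(\tfrac12(M_2+M_1)\Big)^a\Big(\tfrac12(D_1-D_2)\Big)^bw\right],
\eeqsn
proved by a double induction on $a$ and $b$ using \eqref{5} and \eqref{6} exactly as \eqref{7} was proved from \eqref{3} and \eqref{4}; here one also needs that $\tfrac12(M_2+M_1)$ and $\tfrac12(D_1-D_2)$ commute (immediate, since $M_2$ and $D_1$ commute and $M_1$, $D_2$ commute), which plays the role of \eqref{11}.

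Next I would combine the monomials. A general term of $P(M_1,M_2,D_1,D_2)$, once the coefficients are written on the left, is a sum of monomials $M_1^aM_2^bD_1^hD_2^k$. Applying first \eqref{7} (in the form \eqref{9}) to move $D_1^hD_2^k$ through $\Wig$, and then the multiplication analogue above to move $M_1^aM_2^b$ through $\Wig$, we get
\beqsn
M_1^aM_2^bD_1^hD_2^k\Wig[w]=\Wig\left[\Big(\tfrac12(M_2+M_1)\Big)^a\Big(\tfrac12(D_1-D_2)\Big)^b(D_1+D_2)^h(M_2-M_1)^kw\right].
\eeqsn
Summing over the monomials with the coefficients $a_{abhk}$ of $P$ yields the stated identity, since $P(\tfrac12(M_2+M_1),\tfrac12(D_1-D_2),D_1+D_2,M_2-M_1)$ is by definition $\sum a_{abhk}(\tfrac12(M_2+M_1))^a(\tfrac12(D_1-D_2))^b(D_1+D_2)^h(M_2-M_1)^k$ with the transformed operators inserted in the slots of $x,y,D_x,D_y$ respectively.

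The main obstacle is purely bookkeeping: one must fix once and for all an ordering convention for the (non-commuting) factors $M_1,M_2,D_1,D_2$ inside $P$, so that ``the transformed operator'' on the right-hand side is unambiguous, and then check that the substitution rules $M_1\mapsto\tfrac12(M_2+M_1)$, $M_2\mapsto\tfrac12(D_1-D_2)$, $D_1\mapsto D_1+D_2$, $D_2\mapsto M_2-M_1$ are compatible with whatever commutators arise — i.e. that the map sending a monomial to its transformed image is well defined on $P$. Since $\tfrac12(M_2+M_1)$ and $\tfrac12(D_1-D_2)$ commute, $D_1+D_2$ and $M_2-M_1$ commute by \eqref{11}, and the mixed commutators on the right match (up to the constant) those on the left, this is consistent; but it is the one point where care is needed, and it is cleanest simply to declare the convention (say, all powers of $x$ first, then $y$, then $D_x$, then $D_y$) and carry it on both sides. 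Everything else is a direct concatenation of Lemma \ref{lemma1} with its $M$-counterpart, and no analytic input beyond $\Wig[w]\in C^\infty$ (already used in \eqref{9}) is required.
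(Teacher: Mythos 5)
Your argument is correct and is essentially the paper's own proof: the paper also reduces to Lemma~\ref{lemma1} and then iterates \eqref{5} and \eqref{6} to obtain $M_1^mM_2^nD_1^hD_2^k\Wig[w]=\Wig\bigl[2^{-(m+n)}(M_2+M_1)^m(D_1-D_2)^n(M_2-M_1)^k(D_1+D_2)^hw\bigr]$, invoking the same commutation $(D_1-D_2)^n(M_2+M_1)^m=(M_2+M_1)^m(D_1-D_2)^n$ that you need to make the substitution well defined. The one inaccuracy is your parenthetical reason for that commutation: it is not ``immediate because $M_2,D_1$ and $M_1,D_2$ commute'' (the cross terms $D_1M_1$ and $D_2M_2$ do not commute individually); rather the two nonzero commutators $[D_1,M_1]=[D_2,M_2]=-i$ enter with opposite signs and cancel, exactly as in the computation establishing \eqref{11}.
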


\begin{proof}
From Lemma~\ref{lemma1} and \eqref{5},\eqref{6} we have that
\beqsn
M_1^mM_2^nD_1^hD_2^k\Wig[w]=\Wig\left[\frac{1}{2^{n+m}}(M_2+M_1)^m
(D_1-D_2)^n(M_2-M_1)^k(D_1+D_2)^hw\right]
\eeqsn
and hence the thesis, since $M_1^mM_2^n=M_2^nM_1^m$.

Note that, analogously to \eqref{11}, we have that
\beqsn
(D_1-D_2)^n(M_2+M_1)^m=(M_2+M_1)^m(D_1-D_2)^n.
\eeqsn
\end{proof}

\section{Properties and regularity of time-frequency representations in the
Cohen's class with kernel in $\Sch'$}\label{sec3}

Let us now consider a time-frequency representation $Q[w]$ in the
Cohen's class, i.e. of the form
\beqsn
Q[w]:=\sigma*\Wig[w]
\eeqsn
for $w\in\Sch(\R^2)$ and $\sigma\in\Sch'(\R^2)$.

By \eqref{3}, \eqref{4}  we have that
\beqs
\label{13}
&&D_1Q[w]=\sigma*D_1\Wig[w]=Q[(D_1+D_2)w]\\
\label{14}
&&D_2Q[w]=\sigma*D_2\Wig[w]=Q[(M_{2}-M_1)w].
\eeqs
Moreover, let us prove that
\beqs
\label{15}
&&M_1Q[w]=Q\left[\frac12(M_2+M_1)w\right]+(M_1\sigma)*\Wig[w]\\
\label{16}
&&M_2Q[w]=Q\left[\frac12(D_1-D_2)w\right]+(M_2\sigma)*\Wig[w].
\eeqs
Indeed, from \eqref{5} and \eqref{6}:
\beqsn
M_1Q[w](x,y)=&&\int x\sigma(\alpha,\beta)\Wig[w](x-\alpha,y-\beta)d\alpha
d\beta\\
=&&\int\sigma(\alpha,\beta)(x-\alpha)\Wig[w](x-\alpha,y-\beta)d\alpha d\beta\\
&&+\int\alpha\sigma(\alpha,\beta)\Wig[w](x-\alpha,y-\beta)d\alpha d\beta\\
=&&\sigma*(M_1\Wig[w])+(M_1\sigma)*\Wig[w]\\
=&&\sigma*\Wig\left[\frac12(M_2+M_1)w\right]+(M_1\sigma)*\Wig[w]
\eeqsn
and analogously
\beqsn
M_2Q[w](x,y)=&&\int y\sigma(\alpha,\beta)\Wig[w](x-\alpha,y-\beta)d\alpha
d\beta\\
=&&\sigma*(M_2\Wig[w])+(M_2\sigma)*\Wig[w]\\
=&&\sigma*\Wig\left[\frac12(D_1-D_2)w\right]+(M_2\sigma)*\Wig[w],
\eeqsn
where the integrals are intended as the action of the distribution
$\sigma$ when $\sigma$ is not a function.

In order to write also \eqref{15} and \eqref{16} in terms of $Q$ applied
to some $\tilde{P}(M_1,M_2,D_1,D_2)w$, for a linear partial differential
operator $\tilde{P}$ with polynomial coefficients, we now choose
$\sigma(\alpha,\beta)$ so that
\beqs
\label{17}
\begin{cases}
M_1\sigma(\alpha,\beta)=\alpha\sigma(\alpha,\beta)=P_1(D_\alpha,D_\beta)
\sigma(\alpha,\beta)\cr
M_2\sigma(\alpha,\beta)=\beta\sigma(\alpha,\beta)=P_2(D_\alpha,D_\beta)
\sigma(\alpha,\beta)
\end{cases}
\eeqs
for some linear partial differential operators $P_1,P_2$ with
constant coefficients.

Let us solve \eqref{17} by Fourier transform:
\beqs
\label{18}
\begin{cases}
P_1(\xi,\eta)\widehat{\sigma}(\xi,\eta)=i\partial_\xi
\widehat{\sigma}(\xi,\eta)\cr
P_2(\xi,\eta)\widehat{\sigma}(\xi,\eta)=i\partial_\eta
\widehat{\sigma}(\xi,\eta).
\end{cases}
\eeqs

By simple computations, chosen any given real valued polynomial
$P(\xi,\eta)\in\R[\xi,\eta]$,
we can thus set
\beqsn
P_1(\xi,\eta)=\partial_\xi P(\xi,\eta),\qquad
P_2(\xi,\eta)=\partial_\eta P(\xi,\eta)
\eeqsn
and obtain that
\beqs
\label{19}
\widehat{\sigma}(\xi,\eta)=e^{-iP(\xi,\eta)}\in\Sch'(\R^2)
\eeqs
solves \eqref{18} (note that $|\widehat{\sigma}|=1$).
Since the Fourier transform $\F:\ \Sch'\to\Sch'$ is invertible, we have
that
\beqs
\label{20}
\sigma(\alpha,\beta)=\F^{-1}\left(e^{-iP(\xi,\eta)}\right)\in\Sch'(\R^2)
\eeqs
solves \eqref{17}.

For such a choice of $\sigma$, substituting in \eqref{15},
by Lemma~\ref{lemma1} we get:
\beqs
\label{28}
M_1Q[w]=&&Q\left[\frac12(M_2+M_1)w\right]+P_1(D_1,D_2)\sigma*\Wig[w]\\
\nonumber
=&&Q\left[\frac12(M_2+M_1)w\right]+\sigma*P_1(D_1,D_2)\Wig[w]\\
\nonumber
=&&Q\left[\frac12(M_2+M_1)w\right]+\sigma*\Wig[P_1(D_1+D_2,M_2-M_1)w]\\
\nonumber
=&&Q\left[\left(\frac12(M_2+M_1)+P_1(D_1+D_2,M_2-M_1)\right)w\right]\\
\label{21}
=&&Q\left[\left(\frac12(M_2+M_1)+(iD_1P)(D_1+D_2,M_2-M_1)\right)w\right].
\eeqs
Analogously, from \eqref{16}:
\beqs
\label{29}
M_2Q[w]=&&Q\left[\frac12(D_1-D_2)w\right]+P_2(D_1,D_2)\sigma*\Wig[w]\\
\nonumber
=&&Q\left[\left(\frac12(D_1-D_2)+P_2(D_1+D_2,M_2-M_1)\right)w\right]\\
\label{22}
=&&Q\left[\left(\frac12(D_1-D_2)+(iD_2P)(D_1+D_2,M_2-M_1)\right)w\right].
\eeqs

Iterating this procedure we get the following:
\begin{Th}
\label{th1}
Let $B(x,y,D_x,D_y)$ be a linear partial differential operator with
polynomial coefficients and let $\sigma=\F^{-1}(e^{-iP(\xi,\eta)})\in\Sch'(\R^2)$
for some $P\in\R[\xi,\eta]$. Then, for every $w\in\Sch(\R^2)$, the
time-frequency representation $Q[w]=\sigma*\Wig[w]$ satisfies:
\beqs
\label{23}
B(M_1,M_2,D_1,D_2)Q[w]=Q[\bar{B}(M_1,M_2,D_1,D_2)w],
\eeqs
where $\bar{B}$  is the linear partial differential operator with
polynomial coefficients defined by
\beqs
\label{24}
\bar{B}(M_1,M_2,D_1,D_2):=B\left(\frac{M_2+M_1}{2}+P^*_1,
\frac{D_1-D_2}{2}+P^*_2,D_1+D_2,M_2-M_1\right),
\eeqs
with
\beqsn
P^*_1=(iD_1P)(D_1+D_2,M_2-M_1),\qquad P^*_2=(iD_2P)(D_1+D_2,M_2-M_1).
\eeqsn
\end{Th}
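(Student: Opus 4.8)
The plan is to derive \eqref{23} from the four identities \eqref{13}, \eqref{14}, \eqref{21}, \eqref{22} by iteration. Write, for brevity, $\bar D_1:=D_1+D_2$, $\bar D_2:=M_2-M_1$, $\bar M_1:=\frac12(M_1+M_2)+P_1^*$ and $\bar M_2:=\frac12(D_1-D_2)+P_2^*$, so that \eqref{13}, \eqref{14}, \eqref{21}, \eqref{22} read $D_1Q[w]=Q[\bar D_1 w]$, $D_2Q[w]=Q[\bar D_2 w]$, $M_1Q[w]=Q[\bar M_1 w]$, $M_2Q[w]=Q[\bar M_2 w]$. By linearity of $Q$ and of the operators involved it suffices to prove \eqref{23} when $B$ is a single word $W=X_1X_2\cdots X_N$ in the letters $M_1,M_2,D_1,D_2$, which I would do by induction on the length $N$; the case $N=1$ is exactly one of the four identities above.

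For the inductive step I would split $W=X_1W'$ with $W'=X_2\cdots X_N$. Since $W'$ is a linear partial differential operator with polynomial coefficients, $v:=\bar W'w$ lies again in $\Sch(\R^2)$, so the inductive hypothesis applies and gives $W'Q[w]=Q[v]$; then, applying the length-one identity attached to the letter $X_1$ with $v$ in place of $w$, one obtains
\[
W Q[w]=X_1Q[v]=Q[\bar X_1 v]=Q[\bar X_1\bar W'w]=Q[\bar W w].
\]
Summing over the monomials composing $B$ then proves \eqref{23} with $\bar B$ equal to the operator obtained from $B$ by replacing every occurrence of $M_1,M_2,D_1,D_2$ (in the order in which they appear) by $\bar M_1,\bar M_2,\bar D_1,\bar D_2$ respectively.

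The one delicate point, which I expect to be the real obstacle, is to show that the operator so obtained equals the right-hand side of \eqref{24}, i.e.\ that \eqref{24} does not depend on how $B$ is written as a polynomial in $M_1,M_2,D_1,D_2$. This amounts to checking that $\bar M_1,\bar M_2,\bar D_1,\bar D_2$ satisfy the same canonical commutation relations as $M_1,M_2,D_1,D_2$, so that $M_j\mapsto\bar M_j$, $D_j\mapsto\bar D_j$ extends to a homomorphism of the algebra of operators with polynomial coefficients. The relations $[\bar D_1,\bar D_2]=0$ and $[\bar D_j,\bar M_k]=[D_j,M_k]$ follow by direct computation exactly as in \eqref{11}, together with the fact that $P_1^*$ and $P_2^*$ are polynomials in the commuting operators $\bar D_1,\bar D_2$ and hence commute with both of them. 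The identity $[\bar M_1,\bar M_2]=0$ is the crucial one, and it is precisely here that the choice $P_1=\partial_\xi P$, $P_2=\partial_\eta P$ enters: expanding $[\bar M_1,\bar M_2]$, the term $[\frac12(M_1+M_2),\frac12(D_1-D_2)]$ vanishes by a direct computation, the term $[P_1^*,P_2^*]$ vanishes as above, and the two remaining cross terms equal $i(\partial_\xi\partial_\eta P)(\bar D_1,\bar D_2)$ and $-i(\partial_\eta\partial_\xi P)(\bar D_1,\bar D_2)$, which cancel since mixed partial derivatives commute. With the commutation relations established, $\bar B$ in \eqref{24} is well defined and coincides with the operator produced by the induction, which completes the proof.
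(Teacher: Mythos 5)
Your proof is correct, and its engine is the same as the paper's: iterate the four one-letter identities \eqref{13}, \eqref{14}, \eqref{21}, \eqref{22}. The difference is in how the iteration is organized and in one step you add that the paper omits. The paper proves \eqref{23} only for normally ordered monomials $M_1^mM_2^nD_1^hD_2^k$, via \eqref{26} for the derivative part and separate inductions on the exponents of $M_1$ and $M_2$, and then sums over the monomials of $B$; it implicitly reads \eqref{24} as the substitution performed on that normal form and never asks whether the result depends on how $B$ is written. Your induction on the length of an arbitrary word is slightly more general, and your verification that $\bar M_1,\bar M_2,\bar D_1,\bar D_2$ satisfy the same commutation relations as $M_1,M_2,D_1,D_2$ is a genuine addition: it shows that $M_j\mapsto\bar M_j$, $D_j\mapsto\bar D_j$ extends to an algebra homomorphism, so \eqref{24} is well defined independently of the presentation of $B$. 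I checked the commutators and they come out as you claim: $[\bar D_1,\bar D_2]=0$, $[\bar D_j,\bar M_k]=[D_j,M_k]$, and in $[\bar M_1,\bar M_2]$ the cross terms are $i(\partial_\xi\partial_\eta P)(\bar D_1,\bar D_2)$ and $-i(\partial_\eta\partial_\xi P)(\bar D_1,\bar D_2)$, which cancel precisely because $P_1=\partial_\xi P$ and $P_2=\partial_\eta P$ form a gradient. The only thing to make explicit is the derivation identity $[A,F(\bar D_1,\bar D_2)]=[A,\bar D_1]\,(\partial_1F)(\bar D_1,\bar D_2)+[A,\bar D_2]\,(\partial_2F)(\bar D_1,\bar D_2)$ for $[A,\bar D_j]$ central, which you use for the cross terms; with that stated, the argument is complete.
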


\begin{proof}
By \eqref{13}, \eqref{14} and \eqref{11} we immediately get
\beqs
\label{26}
D_1^hD_2^kQ[w]=D_2^kD_1^hQ[w]=Q[(D_1+D_2)^h(M_2-M_1)^kw].
\eeqs

Let us prove by induction on $m\in\N_0$ that
\beqs
\label{25}
M_1^mQ[w]=Q\left[\left(\frac{M_2+M_1}{2}+P^*_1\right)^mw\right].
\eeqs
Indeed, for $m=0$ there is nothing to prove.
Let us assume \eqref{25} true for $m$ and prove it for $m+1$.
By the inductive assumption \eqref{25} and by \eqref{21} we have that
\beqsn
M_1^{m+1}Q[w]=&&M_1Q\left[\left(\frac{M_2+M_1}{2}+P^*_1\right)^mw\right]\\
=&&Q\left[\left(\frac{M_2+M_1}{2}+P_1^*\right)
\left(\frac{M_2+M_1}{2}+P^*_1\right)^mw\right]\\
=&&Q\left[\left(\frac{M_2+M_1}{2}+P^*_1\right)^{m+1}w\right].
\eeqsn

Analogously, by \eqref{22} we can prove by induction on $n\in\N_0$ that
\beqs
\label{27}
M_2^nQ[w]=Q\left[\left(\frac{D_1-D_2}{2}+P_2^*\right)^nw\right].
\eeqs
The thesis then follows from \eqref{26}, \eqref{25} and \eqref{27}.
\end{proof}

Reciprocally, we have the following:
\begin{Th}
\label{th2}
Let $B(x,y,D_x,D_y)$ be a linear partial differential operator with
polynomial coefficients and let $\sigma=\F^{-1}(e^{-iP(\xi,\eta)})\in\Sch'(\R^2)$
for some $P\in\R[\xi,\eta]$. Then, for every $w\in\Sch(\R^2)$, the
time-frequency representation $Q[w]=\sigma*\Wig[w]$ satisfies:
\beqs
\label{30}
Q[B(M_1,M_2,D_1,D_2)w]=\tilde{B}(M_1,M_2,D_1,D_2)Q[w],
\eeqs
where $\tilde{B}$ is the linear partial differential operator with
polynomial coefficients defined by
\beqs
\label{31}
&&\tilde{B}(M_1,M_2,D_1,D_2)\\
\nonumber
=&&B\left(M_1-\frac12D_2-P_1,M_1+\frac12D_2-P_1,
\frac12D_1+M_2-P_2,\frac12D_1-M_2+P_2\right)
\eeqs
with
\beqs
\label{38}
P_1=(iD_1P)(D_1,D_2),\qquad P_2=(iD_2P)(D_1,D_2).
\eeqs
\end{Th}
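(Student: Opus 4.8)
The plan is to run the same kind of induction as in Theorem~\ref{th1}, but now ``reading'' the intertwining relations \eqref{13}--\eqref{16} backwards: we want to express $Q[M_jw]$ and $Q[D_jw]$ in terms of differential operators (with polynomial coefficients) applied to $Q[w]$, and then extend to arbitrary monomials $M_1^mM_2^nD_1^hD_2^k$ and hence to a general $B$. The four ``base'' identities we need are obtained by inverting the change of variables hidden in Proposition~\ref{cor1}: namely from \eqref{13}--\eqref{14} one solves for $Q[D_1w]$ and $Q[D_2w]$, obtaining $Q[D_1w]=\bigl(\tfrac12 D_1+M_2-P_2\bigr)Q[w]$ and $Q[D_2w]=\bigl(\tfrac12 D_1-M_2+P_2\bigr)Q[w]$ after using \eqref{15}--\eqref{16} to handle the $M_2$ term; and from \eqref{15}--\eqref{16} one solves for $Q[M_1w]$ and $Q[M_2w]$, getting $Q[M_1w]=\bigl(M_1-\tfrac12D_2-P_1\bigr)Q[w]$ and $Q[M_2w]=\bigl(M_1+\tfrac12D_2-P_1\bigr)Q[w]$. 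Here $P_1,P_2$ are as in \eqref{38}; the point is that, with the special choice $\widehat\sigma=e^{-iP}$, the correction term $(M_j\sigma)*\Wig[w]$ in \eqref{15}--\eqref{16} becomes $\sigma*((D_jP)(D_1,D_2)\Wig[w])=Q[(iD_jP)(D_1,D_2)$-preimage$]$, which is exactly what feeds the $P_j$ corrections into the coefficients.

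More concretely, I would first establish the four displayed base identities above by direct linear algebra on \eqref{13}, \eqref{14}, \eqref{15}, \eqref{16}, being careful that the operators $\tfrac12D_1\pm M_2\mp P_2$ and $M_1\mp\tfrac12D_2-P_1$ all have polynomial coefficients and, crucially, that the relevant pairs commute so that ``substituting one into another'' is unambiguous: $M_1$ commutes with $D_2$, and $P_1,P_2$ are functions of $D_1,D_2$ alone, so they commute with $M_1$ only up to lower-order terms which one must track exactly as in the computation after \eqref{11}. Then, exactly as in the proof of Theorem~\ref{th1}, I would prove by induction on $h,k,m,n$ that
\beqsn
Q[M_1^mM_2^nD_1^hD_2^kw]
=\Bigl(M_1-\tfrac12D_2-P_1\Bigr)^m\Bigl(M_1+\tfrac12D_2-P_1\Bigr)^n
\Bigl(\tfrac12D_1+M_2-P_2\Bigr)^h\Bigl(\tfrac12D_1-M_2+P_2\Bigr)^k Q[w],
\eeqsn
applying one base identity at a time and using that the four operators on the right, as well as $M_1^mM_2^n=M_2^nM_1^m$ and $D_1^hD_2^k=D_2^kD_1^h$, combine consistently. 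Summing against the coefficients $a_{hk\dots}$ of $B$ gives \eqref{30} with $\tilde B$ as in \eqref{31}.

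The main obstacle is bookkeeping with non-commutativity: when I solve \eqref{15} for $Q[M_1w]$ I get a term $\sigma*(M_1\Wig[w])$ which, via \eqref{5}, is $Q[\tfrac12(M_2+M_1)w]$, and a correction $(M_1\sigma)*\Wig[w]=P_1(D_1,D_2)\sigma*\Wig[w]=\sigma*P_1(D_1,D_2)\Wig[w]$, and by Lemma~\ref{lemma1} this equals $Q[P_1(D_1+D_2,M_2-M_1)w]$ — but I actually want it as an operator applied to $Q[w]$, not as $Q$ of something, so I must instead keep it on the $Q[w]$ side, which forces the $P_j$ in \eqref{38} to be evaluated at $(D_1,D_2)$ rather than at $(D_1+D_2,M_2-M_1)$ as in Theorem~\ref{th1}. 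Keeping straight which ``side'' of $Q$ each correction lives on, and checking that the resulting operators in \eqref{31} are mutually consistent under the induction (i.e.\ that the order in which one peels off $M_1,M_2,D_1,D_2$ from $M_1^mM_2^nD_1^hD_2^kw$ does not matter), is the only real work; everything else is the same template as Theorem~\ref{th1}.
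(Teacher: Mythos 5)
Your proposal is correct and follows essentially the same route as the paper: the paper likewise records the four relations $D_1Q[w]=Q[D_1w]+Q[D_2w]$, $D_2Q[w]=Q[M_2w]-Q[M_1w]$, $M_1Q[w]=\tfrac12Q[M_2w]+\tfrac12Q[M_1w]+P_1Q[w]$, $M_2Q[w]=\tfrac12Q[D_1w]-\tfrac12Q[D_2w]+P_2Q[w]$, solves this decoupled linear system for $Q[D_jw]$ and $Q[M_jw]$ exactly as you describe, and then iterates using the same commutativity observations to reach \eqref{32} and hence \eqref{30}--\eqref{31}. Your remark that the corrections $P_j$ must stay on the $Q[w]$ side, hence be evaluated at $(D_1,D_2)$, is precisely the point the paper's proof turns on.
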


\begin{proof}
From \eqref{13}, \eqref{14}, \eqref{28} and \eqref{29} we have:
\beqs
\label{a}
&&D_1Q[w]=Q[D_1w]+Q[D_2w]\\
\label{b}
&&D_2Q[w]=Q[M_2w]-Q[M_1w]\\
\label{c}
&&M_1Q[w]=\frac12Q[M_2w]+\frac12Q[M_1w]+P_1Q[w]\\
\label{d}
&&M_2Q[w]=\frac12Q[D_1w]-\frac12Q[D_2w]+P_2Q[w].
\eeqs
Therefore, from \eqref{a} and \eqref{d}:
\beqsn
&&Q[D_1w]=\left(\frac12D_1+M_2-P_2\right)Q[w]\\
&&Q[D_2w]=\left(\frac12D_1-M_2+P_2\right)Q[w];
\eeqsn
from \eqref{b} and \eqref{c}:
\beqsn
&&Q[M_1w]=\left(M_1-\frac12D_2-P_1\right)Q[w]\\
&&Q[M_2w]=\left(M_1+\frac12D_2-P_1\right)Q[w].
\eeqsn
Iterating:
\beqs
\nonumber
Q[M_1^mM_2^nD_1^hD_2^kw]
=&&\left(M_1-\frac12D_2-P_1\right)^m\left(M_1+\frac12D_2-P_1\right)^n\\
\label{32}
&&\cdot\left(\frac12D_1+M_2-P_2\right)^h\left(\frac12D_1-M_2+P_2\right)^kQ[w].
\eeqs

Let us remark that $\left(M_1-\frac12D_2-P_1(D_1,D_2)\right)$ and
$\left(M_1+\frac12D_2-P_1(D_1,D_2)\right)$ commute, as also
$\left(\frac12D_1+M_2-P_2(D_1,D_2)\right)$ and
$\left(\frac12D_1-M_2+P_2(D_1,D_2)\right)$. On the other hand,
$M_1$ and $M_2$ commute and also $D_1D_2w=D_2D_1w$ since $w\in
C^\infty(\R^2)$.
The thesis follows therefore from \eqref{32}.
\end{proof}

In order to prove further properties of $Q$, let us define the space
$C^\infty_p$ of $C^\infty$ functions with polynomial growth:
\beqsn
C^\infty_p{(\mathbb{R}^n)}:=\{\varphi\in C^\infty(\R^n):\ \exists N\in\N,\,c>0\
\mbox{s.t.}\
|\partial^\gamma\varphi(x)|\leq c(1+|x|^2)^N\ \forall x\in\R^n,\,\gamma\in
\N_0^n\}.
\eeqsn
The last space is included in the space of multipliers $\mathcal{O}_M(\R^n)$ of the space $\mathcal{S}(\R^n)$, i.e., the space of smooth functions $F$ such that $F\, \mathcal{S}(\R^n)\subset \mathcal{S}(\R^n)$. Indeed, it is known that $F\in  \mathcal{O}_M(\R^n)$ if and only if for each $k\in\N$ there is $C>0$ and $j\in\N$ such that $|F^{(\alpha)}(x)|\le C(1+|x|)^j$ for all multi-index $\alpha$ with $|\alpha|\le k$. Then, the next lemma is obvious \cite{horvath,schwartz}.

\begin{Lemma}
\label{lemma4}
Let $\varphi\in C^\infty_p(\R^n)$. If $u\in\Sch(\R^n)$, then
$\varphi u\in\Sch(\R^n)$; if $w\in\Sch'(\R^n)$, then
$\varphi w\in\Sch'(\R^n)$.

\end{Lemma}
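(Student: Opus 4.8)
The plan is to reduce the statement to the known characterization of the multiplier space $\mathcal{O}_M(\R^n)$, which the paragraph immediately preceding the lemma has already recalled: a smooth function $F$ belongs to $\mathcal{O}_M(\R^n)$ if and only if for every $k\in\N$ there exist $C>0$ and $j\in\N$ with $|F^{(\alpha)}(x)|\le C(1+|x|)^j$ for all multi-indices $\alpha$ with $|\alpha|\le k$. So the first step is simply to check that any $\varphi\in C^\infty_p(\R^n)$ satisfies this condition. By definition of $C^\infty_p$ there is a single $N\in\N$ and $c>0$ with $|\partial^\gamma\varphi(x)|\le c(1+|x|^2)^N\le c(1+|x|^2)^N\le c\,2^N(1+|x|)^{2N}$ for all $\gamma$, which in particular gives, for any fixed $k$, the bound with $j:=2N$ and constant $c\,2^N$ independent of $\alpha$. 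Hence $C^\infty_p(\R^n)\subset\mathcal{O}_M(\R^n)$, i.e.\ $\varphi$ is a multiplier of $\Sch(\R^n)$; this yields $\varphi u\in\Sch(\R^n)$ whenever $u\in\Sch(\R^n)$.

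For the second assertion I would argue by duality. Since $\varphi u\in\Sch$ for all $u\in\Sch$ and, more precisely, multiplication by $\varphi$ is a continuous linear map $\Sch(\R^n)\to\Sch(\R^n)$ (the seminorm estimates follow from the Leibniz rule together with the polynomial bounds on the derivatives of $\varphi$), its transpose is a continuous linear map $\Sch'(\R^n)\to\Sch'(\R^n)$, and one checks that this transpose is exactly multiplication by $\varphi$: for $w\in\Sch'$ and $u\in\Sch$ we define $\langle\varphi w,u\rangle:=\langle w,\varphi u\rangle$, which makes sense precisely because $\varphi u\in\Sch$, and this extends the usual product when $w$ is a function. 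Therefore $\varphi w\in\Sch'(\R^n)$.

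If one prefers to avoid invoking $\mathcal{O}_M$ as a black box, the direct route for the first part is: for $u\in\Sch$ estimate, for arbitrary multi-indices $\alpha,\beta$,
\[
x^\alpha\partial^\beta(\varphi u)=\sum_{\gamma\le\beta}\binom{\beta}{\gamma}x^\alpha(\partial^\gamma\varphi)(\partial^{\beta-\gamma}u),
\]
and bound each term using $|\partial^\gamma\varphi(x)|\le c(1+|x|^2)^N$ and the rapid decay of the derivatives of $u$; since the exponent $N$ is uniform in $\gamma$, finitely many Schwartz seminorms of $u$ control each Schwartz seminorm of $\varphi u$, giving continuity of the multiplication operator, and then duality proceeds as above.

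The main (and essentially only) obstacle is purely bookkeeping: one must be careful that the constant $N$ in the definition of $C^\infty_p$ does not depend on the order of the derivative, so that the polynomial-growth bound is genuinely uniform over all $\gamma$ — this is what makes multiplication continuous on $\Sch$ with control by finitely many seminorms. Everything else (Leibniz, the duality pairing, consistency of $\varphi w$ with the classical product) is routine, which is why the paper simply calls the lemma ``obvious'' and cites \cite{horvath,schwartz}.
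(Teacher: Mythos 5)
Your proposal is correct and follows exactly the route the paper intends: the paper gives no explicit proof, simply noting that $C^\infty_p(\R^n)\subset\mathcal{O}_M(\R^n)$ via the recalled characterization of the multiplier space and declaring the lemma obvious, and your argument just fills in the routine verification of that inclusion plus the standard duality definition $\langle\varphi w,u\rangle:=\langle w,\varphi u\rangle$. No gaps.
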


%\begin{proof}
%For all $\alpha,\beta\in\N^n_0$ we have:
%\beqsn
%|x^\alpha\partial^\beta(\varphi u)(x)|\leq&&\sum_{\gamma\leq\beta}C_{\gamma,\beta}
%|x^\alpha\partial^\gamma\varphi(x)\cdot\partial^{\beta-\gamma}u(x)|\\
%=&&\sum_{\gamma\leq\beta}C_{\gamma,\beta}
%\frac{|\partial^\gamma\varphi(x)|}{(1+|x|^2)^N}\cdot
%|(1+|x|^2)^Nx^\alpha\partial^{\beta-\gamma}u(x)|\leq C_{\alpha,\beta}
%\eeqsn
%for some $C_{\alpha,\beta}>0$, since $|\partial^\gamma\varphi(x)|/(1+|x|^2)^N$ is
%bounded by definition of the space $C^\infty_p$ and
%$(1+|x|^2)^Nx^\alpha\partial^{\beta-\gamma} u(x)$ is bounded for $u\in\Sch$.
%Therefore $\varphi u\in\Sch$. {Observe moreover that, by the same computations,
%  if $\psi_j\in\Sch(\R)$, $\psi_j\to 0$ in $\Sch$, then $\varphi\psi_j\to 0$ in
%  $\Sch$.}
%
%The last statement then also follows, since
%\beqsn
%\langle\varphi w,\psi\rangle=\langle w,\varphi\psi\rangle
%\qquad\forall\psi\in\Sch,
%\eeqsn
%    {so $\varphi w$ is well defined on $\Sch$, and it is continuous since for
%      every sequence $\psi_j\to 0$ in $\Sch$ we have $\langle\varphi w,\psi_j
%      \rangle = \langle w,\varphi\psi_j\rangle\to 0$ from the previous point.}
%\end{proof}

We recall the notion of regularity from \cite{S}:
\begin{Def}
\label{def-reg-S}
A linear operator $A$ on $\Sch'(\R^n)$ is {\em regular} if
\beqsn
Au\in\Sch(\R^n)\quad \Rightarrow\quad
u\in\Sch(\R^n),\qquad\forall u\in\Sch'(\R^n).
\eeqsn
\end{Def}

We have the following:
\begin{Lemma}
\label{lemma2}
For $\sigma=\F^{-1}(e^{-iP(\xi,\eta)})\in\Sch'(\R^2)$
with $P\in\R[\xi,\eta]$ and $Q[w]=\sigma*\Wig[w]$, we have that:
\begin{itemize}
\item[(i)]
$Q:\ \Sch'\to\Sch'$ is invertible;
\item[(ii)]
$Q$ is {\em regular};
\item[(iii)]
$Q:\ \Sch\to\Sch$.
\end{itemize}
\end{Lemma}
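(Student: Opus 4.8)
The plan is to exploit the factorization $\Wig[w]=\F_t(Tw)$ from \eqref{wigner-fourier} together with the fact that $|\widehat\sigma|=|e^{-iP}|=1$, so that convolution with $\sigma$ amounts, on the Fourier side, to multiplication by the unimodular smooth function $e^{-iP(\xi,\eta)}$. For (i), I would write $Q[w]=\sigma*\Wig[w]$ and take Fourier transforms: $\widehat{Q[w]}=\widehat\sigma\cdot\widehat{\Wig[w]}=e^{-iP}\cdot\widehat{\Wig[w]}$. Since $e^{-iP}$ and its reciprocal $e^{iP}$ both lie in $C^\infty_p(\R^2)\subset\mathcal{O}_M(\R^2)$ (a polynomial $P$ has polynomial-growth derivatives, and so do $e^{\pm iP}$ by the chain rule, using $|e^{\pm iP}|=1$), Lemma~\ref{lemma4} shows that multiplication by $e^{-iP}$ is a continuous bijection of $\Sch'(\R^2)$ onto itself with inverse multiplication by $e^{iP}$. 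Composing with the Fourier transform (an isomorphism of $\Sch'$) and with $\Wig:\Sch'\to\Sch'$ (invertible, as recalled after \eqref{1}), we conclude $Q:\Sch'\to\Sch'$ is invertible, with $Q^{-1}[v]=\Wig^{-1}\big(\F^{-1}(e^{iP}\,\widehat v)\big)$.

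For (iii) the same chain of maps applies verbatim with $\Sch$ in place of $\Sch'$: $\Wig:\Sch\to\Sch$, $\F:\Sch\to\Sch$, and multiplication by $e^{-iP}\in C^\infty_p$ sends $\Sch$ to $\Sch$ by the first half of Lemma~\ref{lemma4}; hence $Q:\Sch\to\Sch$. (If continuity is wanted, each factor is continuous, but for the stated conclusion set-theoretic mapping suffices.)

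For (ii), regularity, suppose $u\in\Sch'(\R^2)$ and $Q[u]\in\Sch(\R^2)$. I want $u\in\Sch$. Since $Q$ is invertible on $\Sch'$ with inverse $Q^{-1}[v]=\Wig^{-1}\big(\F^{-1}(e^{iP}\,\widehat v)\big)$, we have $u=Q^{-1}[Q[u]]$. Now apply the mapping property just established to $Q^{-1}$: it is again the composition of $\F^{-1}$, multiplication by $e^{iP}\in C^\infty_p$, and $\Wig^{-1}$, each of which maps $\Sch$ into $\Sch$ (for $\Wig^{-1}$, note it is itself a linear change of variables composed with an inverse partial Fourier transform, so it preserves $\Sch$, exactly as $\Wig$ does). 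Therefore $Q[u]\in\Sch$ forces $u=Q^{-1}[Q[u]]\in\Sch$, which is precisely regularity in the sense of Definition~\ref{def-reg-S}.

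The only genuinely delicate point is the claim $e^{\pm iP(\xi,\eta)}\in C^\infty_p(\R^2)$: one must check that all derivatives of $e^{-iP}$ grow at most polynomially. This follows by induction on the order of differentiation, since each $\partial^\gamma e^{-iP}$ is $e^{-iP}$ times a polynomial in the derivatives of $P$ (Fa\`a di Bruno), and $|e^{-iP}|=1$ because $P$ is real-valued — this is exactly why the hypothesis $P\in\R[\xi,\eta]$ is imposed. Once this is in hand, everything else is a routine chaining of isomorphisms, and the three assertions follow simultaneously from the single identity $\widehat{Q[w]}=e^{-iP}\,\widehat{\Wig[w]}$.
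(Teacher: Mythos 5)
Your proof is correct and takes essentially the same route as the paper's: both reduce everything to the identity $\widehat{Q[w]}=e^{-iP}\,\widehat{\Wig[w]}$, the invertibility of $\F$ and $\Wig$ on $\Sch$ and $\Sch'$, and the fact that $e^{\pm iP}$ is a multiplier (the paper's Lemma~\ref{lemma4}), with your treatment of (ii) via $Q^{-1}[v]=\Wig^{-1}(\F^{-1}(e^{iP}\widehat v))$ being just a repackaging of the paper's direct division by $\widehat\sigma$. Your explicit Fa\`a di Bruno check that $e^{\pm iP}$ has polynomially bounded derivatives is a welcome detail the paper leaves implicit.
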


\begin{proof}
Let us first prove that if $w\in\Sch'$ then $Q[w]$ is a well defined element of
$\Sch'$. As a matter of fact,
$
\widehat{Q[w]}=\widehat{\sigma}\cdot\widehat{\Wig[w]}\in\Sch'
$
because of Lemma~\ref{lemma4}, since $\widehat{\sigma}\in C^\infty_p$
and $\widehat{\Wig[w]}\in\Sch'$ for $w\in\Sch'$.
Then $Q[w]$ is well defined as
$\F^{-1}\left(\widehat{\sigma}\cdot\widehat{\Wig[w]}\right)\in\Sch'$.

The injectivity of $Q:\ \Sch'\to\Sch'$ is trivial. To prove the surjectivity,
take $w\in\Sch'$. Then $\widehat{w}\in\Sch'$ and, by Lemma~\ref{lemma4},
also $\widehat{w}/\widehat{\sigma}\in\Sch'$ since
$1/\widehat{\sigma}\in C^\infty_p$. By the surjectivity of the Fourier
transform there exists $\psi\in\Sch'$ such that
$\widehat{w}/\widehat{\sigma}=\widehat{\psi}$. By the surjectivity of
the Wigner transform, $\psi=\Wig[u]$ for some $u\in\Sch'$ and therefore
\beqsn
\widehat{w}=\widehat{\sigma}\widehat{\psi}=\widehat{\sigma}\cdot
\widehat{\Wig[u]}=\!\!\!\!\!\widehat{\quad\sigma*\Wig[u]}=\widehat{Q[u]}
\eeqsn
and by the injectivity of the Fourier transform $w=Q[u]$. This proves $(i)$.

To prove condition $(ii)$, assume that $Q[w]\in\Sch$ for some $w\in\Sch'$.
From $\widehat{Q[w]}=\widehat{\sigma}\cdot\widehat{\Wig[w]}\in\Sch$ we thus
have that
$\widehat{\Wig[w]}\in\Sch$ since $|\widehat{\sigma}|=1$.
Therefore $\Wig[w]\in\Sch$ and hence $w\in\Sch$. This proves that $Q$ is
regular.

Finally, to prove $(iii)$ let us remark that, for $w\in\Sch$,
\beqsn
Q[w]=\sigma*\Wig[w]=\F^{-1}\left(\widehat{\sigma}\cdot
\widehat{\Wig[w]}\right)\in\Sch
\eeqsn
because of Lemma~\ref{lemma4}, since $\widehat{\sigma}\in C^\infty_p$ and
$\widehat{\Wig[w]}\in\Sch$ for $w\in\Sch$.
\end{proof}

\begin{Th}
\label{th3}
Let $B(x,y,D_x,D_y)$ be a linear partial differential operator with
polynomial coefficients and let $\sigma=\F^{-1}(e^{-iP(\xi,\eta)})\in\Sch'(\R^2)$
for some $P\in\R[\xi,\eta]$.

If $B$ is regular and $\bar{B}$ is defined by \eqref{24}, then also
$\bar{B}$ is regular.
\end{Th}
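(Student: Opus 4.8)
The plan is to reduce the regularity of $\bar B$ to that of $B$ via the intertwining identity \eqref{23} together with the three structural properties of $Q$ established in Lemma~\ref{lemma2}. The key observation is that $Q$ is a bijection on $\Sch'$ that maps $\Sch$ into $\Sch$ and is itself regular, so it is a kind of isomorphism as far as Schwartz regularity is concerned; the identity \eqref{23} then transports regularity from $B$ to $\bar B$.

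First I would take $u\in\Sch'(\R^2)$ with $\bar B u\in\Sch(\R^2)$, and the goal is to show $u\in\Sch(\R^2)$. By Lemma~\ref{lemma2}(i), $Q:\Sch'\to\Sch'$ is surjective, so there is $w\in\Sch'(\R^2)$ with $u=Q[w]$. Applying \eqref{23} gives $\bar B u=\bar B Q[w]=Q[B w]$ (note \eqref{23} is stated for $w\in\Sch$, but both sides are continuous in $w\in\Sch'$ — each of $B$, $\bar B$, $Q$ acts continuously on $\Sch'$ by Lemma~\ref{lemma4} and Lemma~\ref{lemma2}(i) — so the identity extends to $\Sch'$ by density of $\Sch$; alternatively one rereads the inductive proof of Theorem~\ref{th1}, which uses only \eqref{13}--\eqref{16} and \eqref{21}--\eqref{22}, all valid for $w\in\Sch'$). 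Thus $Q[Bw]\in\Sch(\R^2)$.

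Next I would invoke Lemma~\ref{lemma2}(ii): $Q$ is regular, so from $Q[Bw]\in\Sch$ and $Bw\in\Sch'$ we conclude $Bw\in\Sch(\R^2)$. Since $B$ is regular and $w\in\Sch'$, this forces $w\in\Sch(\R^2)$. Finally, by Lemma~\ref{lemma2}(iii), $Q$ maps $\Sch$ into $\Sch$, so $u=Q[w]\in\Sch(\R^2)$, which is exactly what we needed. Hence $\bar B$ is regular.

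The only genuinely delicate point is the justification that \eqref{23} holds for $w\in\Sch'$ rather than merely $w\in\Sch$; everything else is a direct chaining of Lemma~\ref{lemma2}. I expect this to be handled in one line by the density/continuity argument above, since all operators involved ($B$, $\bar B$, and $Q$, the latter because $\widehat\sigma\in C^\infty_p$ so multiplication by $\widehat\sigma$ is continuous on $\Sch'$ by Lemma~\ref{lemma4}) are continuous on $\Sch'$ and agree on the dense subspace $\Sch$.
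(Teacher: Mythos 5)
Your argument hinges on the claim that \eqref{23} gives $\bar B\,Q[w]=Q[Bw]$, and this is where it breaks down. The identity \eqref{23} of Theorem~\ref{th1} reads $B\,Q[w]=Q[\bar B w]$: the operator $\bar B$ sits \emph{inside} $Q$, while the original $B$ acts on $Q[w]$. What you have written is the reversed relation, which is the content of Theorem~\ref{th2}, and there the operator acting outside $Q$ is $\tilde B$ as in \eqref{31}, not $\bar B$ as in \eqref{24}; these are genuinely different operators (for instance, for $B=D_x$ and $P=0$ one has $\bar B=D_1+D_2$ whereas $\tilde B=\frac12 D_1+M_2$). So the chain ``$\bar B u=Q[Bw]\in\Sch\Rightarrow Bw\in\Sch\Rightarrow w\in\Sch\Rightarrow u=Q[w]\in\Sch$'' starts from a false identity; the strategy you describe is the one that proves Theorem~\ref{th4} (for $\tilde B$), not Theorem~\ref{th3}.

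For $\bar B$ the reduction goes in the opposite direction and is in fact simpler: given $u\in\Sch'$ with $\bar B u\in\Sch$, you do not need surjectivity of $Q$ at all. Form $Q[u]\in\Sch'$ (Lemma~\ref{lemma2}(i)); by \eqref{23} and Lemma~\ref{lemma2}(iii), $B\,Q[u]=Q[\bar B u]\in\Sch$; regularity of $B$ gives $Q[u]\in\Sch$, and regularity of $Q$ (Lemma~\ref{lemma2}(ii)) gives $u\in\Sch$. Your observation that \eqref{23} is stated for $w\in\Sch$ but must be applied to elements of $\Sch'$ is a legitimate point (the paper's proof also uses \eqref{23} with $w\in\Sch'$), and the continuity/density justification you sketch would serve equally well in the corrected argument.
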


\begin{proof}
Let us assume that $\bar{B}w\in\Sch$ for $w\in\Sch'$ and prove that
$w\in\Sch$.

Indeed, $Q[w]=\sigma*\Wig[w]\in\Sch'$ by Lemma~\ref{lemma2}~$(i)$ and,
by Theorem~\ref{th1} and Lemma~\ref{lemma2}~$(iii)$, we get that $BQ[w]=Q[\bar{B}w]\in\Sch$.
Since $B$ is regular by assumption, we have that $Q[w]\in\Sch$ and hence
$w\in\Sch$ by the regularity of $Q$ from Lemma~\ref{lemma2}~$(ii)$.
\end{proof}

\begin{Th}
\label{th4}
Let $B(x,y,D_x,D_y)$ be a linear partial differential operator with
polynomial coefficients and let $\sigma=\F^{-1}(e^{-iP(\xi,\eta)})\in\Sch'(\R^2)$
for some $P\in\R[\xi,\eta]$.

If $B$ is regular and $\tilde{B}$ is defined by \eqref{31}, then also
$\tilde{B}$ is regular.
\end{Th}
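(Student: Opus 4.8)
The plan is to run the argument of Theorem~\ref{th3} ``backwards'', using the intertwining identity of Theorem~\ref{th2} in place of the one of Theorem~\ref{th1} and exploiting the surjectivity of $Q$ on $\Sch'$. So suppose $w\in\Sch'(\R^2)$ satisfies $\tilde B(M_1,M_2,D_1,D_2)w\in\Sch(\R^2)$; the goal is to show $w\in\Sch(\R^2)$. First I would use Lemma~\ref{lemma2}~$(i)$: since $Q:\Sch'\to\Sch'$ is a bijection, there is $v\in\Sch'(\R^2)$ with $w=Q[v]$.

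Next I would invoke Theorem~\ref{th2}, applied to $v$, which gives $\tilde B w=\tilde B Q[v]=Q[Bv]$, so that $Q[Bv]\in\Sch$. Because $B$ has polynomial coefficients we have $Bv\in\Sch'$, and $Q$ is regular by Lemma~\ref{lemma2}~$(ii)$; hence $Bv\in\Sch$. The hypothesis that $B$ is regular then yields $v\in\Sch$, and finally $w=Q[v]\in\Sch$ by Lemma~\ref{lemma2}~$(iii)$, which is what we wanted. This is the whole skeleton; it is completely parallel to Theorem~\ref{th3}, only with the roles of $Q$ and $Q^{-1}$ exchanged.

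The one point that requires a word of care --- and this is the main (mild) obstacle --- is that Theorem~\ref{th2} is formulated for $w\in\Sch$, whereas above it is applied to $v\in\Sch'$. This is, however, exactly the same extension already implicitly used in the proof of Theorem~\ref{th3} when Theorem~\ref{th1} is applied to an element of $\Sch'$. It is justified as follows: the representation $Q[v]=\F^{-1}(\widehat\sigma\,\widehat{\Wig[v]})$ is meaningful for $v\in\Sch'$ (as in the proof of Lemma~\ref{lemma2}), the multiplication operators $M_1,M_2$ and the derivations $D_1,D_2$ act continuously on $\Sch'$, and the elementary relations \eqref{a}--\eqref{d} (equivalently \eqref{13}--\eqref{16}) hold verbatim in $\Sch'$; consequently the purely algebraic iteration leading to \eqref{32} and hence to \eqref{30}--\eqref{31} goes through for every $v\in\Sch'$. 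Alternatively, one may note that $\Sch$ is sequentially dense in $\Sch'$ and that both sides of \eqref{30} depend continuously on $w\in\Sch'$, so that the identity extends by continuity. Either way, the step ``$\tilde B Q[v]=Q[Bv]$ for $v\in\Sch'$'' is legitimate, and the proof is complete.
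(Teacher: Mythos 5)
Your proposal is correct and follows essentially the same route as the paper's own proof: surjectivity of $Q$ on $\Sch'$, the intertwining identity $Q[Bv]=\tilde B Q[v]$ from Theorem~\ref{th2}, regularity of $Q$, regularity of $B$, and finally $Q:\Sch\to\Sch$. Your extra remark justifying the application of Theorem~\ref{th2} to elements of $\Sch'$ is a reasonable point of care that the paper passes over silently (and in fact uses again in Theorem~\ref{cor2}, which is stated directly for $w\in\Sch'$).
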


\begin{proof}
Let us assume $\tilde{B}u\in\Sch$ for $u\in\Sch'$ and prove that
$u\in\Sch$.

Indeed, by the surjectivity of $Q[w]=\sigma*\Wig[w]$
(cf. Lemma~\ref{lemma2}~$(i)$),
there exists $w\in\Sch'$ such that
$u=Q[w]$ and hence, from Theorem~\ref{th2},
\beqsn
Q[Bw]=\tilde{B}Q[w]=\tilde{B}u\in\Sch.
\eeqsn
By the regularity of $Q$ (cf. Lemma~\ref{lemma2}~$(ii)$) we have that
$Bw\in\Sch$ and hence $w\in\Sch$ by the regularity of $B$.
Then also $u=Q[w]\in\Sch$ by Lemma~\ref{lemma2}~$(iii)$.
\end{proof}

\vspace{3mm}
Let us now consider
\beqs
\label{34}
\widehat{\sigma}_1(\xi,\eta)=q(\xi,\eta)\widehat{\sigma}(\xi,\eta)
=q(\xi,\eta)e^{-iP(\xi,\eta)},
\eeqs
where $\sigma$ is defined by \eqref{20} for $P(\xi,\eta)\in\R[\xi,\eta]$
and $q(\xi,\eta)\in\C[\xi,\eta]$ is a polynomial
that never vanishes on $\R^2$.
Then
\beqsn
\sigma_1(x,y)=q(D_x,D_y)\sigma(x,y)
\eeqsn
and, by Lemma~\ref{lemma1}:
\beqs
\nonumber
Q^{(\sigma_1)}[w]:=&&\sigma_1*\Wig[w]=(q(D_1,D_2)\sigma)*\Wig[w]\\
\nonumber
=&&\sigma*(q(D_1,D_2)\Wig[w])\\
\label{36}
=&&\sigma*\Wig[q(D_1+D_2,M_2-M_1)w]=Q^{(\sigma)}[Aw],
\eeqs
for $A(M_1,M_2,D_1,D_2):=q(D_1+D_2,M_2-M_1)$.

\begin{Prop}
\label{prop3}
Let $\sigma=\F^{-1}(e^{-iP(\xi,\eta)})\in\Sch'(\R^2)$
with $P\in\R[\xi,\eta]$, $\sigma_1=q(D_1,D_2)\sigma$ for a polynomial
$q(\xi,\eta)$ that never vanishes on $\R^2$, and set
$Q^{(\sigma_1)}[w]=\sigma_1*\Wig[w]$.
 Then:
\begin{itemize}
\item[(i)]
$Q^{(\sigma_1)}:\ \Sch'\to\Sch'$ is invertible;
\item[(ii)]
$Q^{(\sigma_1)}$ is {\em regular};
\item[(iii)]
$Q^{(\sigma_1)}:\ \Sch\to\Sch$.
\end{itemize}
\end{Prop}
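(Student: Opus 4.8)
The plan is to carry over the argument of Lemma~\ref{lemma2} almost verbatim; the only genuinely new point is that the amplitude $\widehat{\sigma}_1$ and its reciprocal are both multipliers of $\Sch(\R^2)$ (in Lemma~\ref{lemma2} one got away with $|\widehat{\sigma}|=1$, but here $|\widehat{\sigma}_1|=|q|$ need not be $1$). From \eqref{34} we have $\widehat{\sigma}_1(\xi,\eta)=q(\xi,\eta)e^{-iP(\xi,\eta)}$. Every derivative of $e^{\pm iP}$ is of the form (polynomial)$\cdot e^{\pm iP}$, hence polynomially bounded, so $e^{\pm iP}\in\mathcal{O}_M(\R^2)$; since $\mathcal{O}_M(\R^2)$ is an algebra containing $\C[\xi,\eta]$, it follows that $\widehat{\sigma}_1=qe^{-iP}\in\mathcal{O}_M(\R^2)$. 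For the reciprocal, $1/\widehat{\sigma}_1=e^{iP}/q$; as $q$ has no zero on $\R^2$, it is classical (by the \L{}ojasiewicz inequality for polynomials without real zeros) that $|q(\xi,\eta)|\ge c(1+|\xi|^2+|\eta|^2)^{-N}$ for some $c>0$ and $N\in\N$, and since each $\partial^\gamma(1/q)$ is a rational function with numerator a polynomial and denominator a power of $q$, this lower bound gives $1/q\in\mathcal{O}_M(\R^2)$, whence $1/\widehat{\sigma}_1=e^{iP}\cdot(1/q)\in\mathcal{O}_M(\R^2)$. In particular multiplication by $\widehat{\sigma}_1$ and by $1/\widehat{\sigma}_1$ maps $\Sch(\R^2)$ into itself and $\Sch'(\R^2)$ into itself, and $(1/\widehat{\sigma}_1)(\widehat{\sigma}_1 T)=T$ for every $T\in\Sch'(\R^2)$.

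Granting this, I would deduce the three items exactly as in Lemma~\ref{lemma2}. For the well-definedness of $Q^{(\sigma_1)}$ and for $(iii)$: if $w\in\Sch$ then $\widehat{\Wig[w]}\in\Sch$ since $\Wig:\Sch\to\Sch$, hence $\widehat{Q^{(\sigma_1)}[w]}=\widehat{\sigma}_1\cdot\widehat{\Wig[w]}\in\Sch$ and $Q^{(\sigma_1)}[w]=\F^{-1}(\widehat{\sigma}_1\cdot\widehat{\Wig[w]})\in\Sch$; the same computation with $\Sch'$ in place of $\Sch$ shows $Q^{(\sigma_1)}$ is well defined on $\Sch'$, which gives $(iii)$. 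For $(i)$: if $Q^{(\sigma_1)}[w]=0$ then $\widehat{\sigma}_1\cdot\widehat{\Wig[w]}=0$, so $\widehat{\Wig[w]}=(1/\widehat{\sigma}_1)(\widehat{\sigma}_1\cdot\widehat{\Wig[w]})=0$ and $w=0$ by invertibility of $\Wig$ and $\F$ on $\Sch'$; for surjectivity, given $u\in\Sch'$ put $v:=\F^{-1}\big((1/\widehat{\sigma}_1)\widehat{u}\big)\in\Sch'$ and $w:=\Wig^{-1}[v]\in\Sch'$, so that $\widehat{\sigma}_1\cdot\widehat{\Wig[w]}=\widehat{\sigma}_1\cdot(1/\widehat{\sigma}_1)\widehat{u}=\widehat{u}$, i.e.\ $Q^{(\sigma_1)}[w]=u$. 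For $(ii)$: if $Q^{(\sigma_1)}[w]\in\Sch$ for some $w\in\Sch'$, then $\widehat{\sigma}_1\cdot\widehat{\Wig[w]}\in\Sch$, hence $\widehat{\Wig[w]}=(1/\widehat{\sigma}_1)(\widehat{\sigma}_1\cdot\widehat{\Wig[w]})\in\Sch$, so $\Wig[w]\in\Sch$ and therefore $w\in\Sch$.

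The only step not already contained in Lemma~\ref{lemma2}, and the only place the hypothesis that $q$ never vanishes on $\R^2$ is used, is the verification that $1/\widehat{\sigma}_1\in\mathcal{O}_M(\R^2)$; I expect this \L{}ojasiewicz-type estimate to be the one (mild) obstacle. Alternatively one could build on the factorization $Q^{(\sigma_1)}[w]=Q^{(\sigma)}[Aw]$ from \eqref{36} with $A=q(D_1+D_2,M_2-M_1)$ together with Lemma~\ref{lemma2}, but proving $(i)$ that way still requires showing $A$ is a bijection of $\Sch'$, which again reduces to the non-vanishing of $q$; the Fourier-side argument above seems the most direct.
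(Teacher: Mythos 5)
Your proof is correct and follows essentially the same route as the paper, whose entire argument for Proposition~\ref{prop3} is the remark that it is ``analogous to that of Lemma~\ref{lemma2}, since $\widehat{\sigma}_1(\xi,\eta)=q(\xi,\eta)\widehat{\sigma}(\xi,\eta)$ and $q(\xi,\eta)$ never vanishes.'' The one point you make explicit that the paper leaves implicit is exactly the right one: the Seidenberg--Tarski/\L{}ojasiewicz lower bound $|q(\xi,\eta)|\ge c(1+|\xi|^2+|\eta|^2)^{-N}$ for a polynomial without real zeros, which is what makes $1/\widehat{\sigma}_1=e^{iP}/q$ a multiplier of $\Sch(\R^2)$ and of $\Sch'(\R^2)$ and lets the Fourier-side argument of Lemma~\ref{lemma2} go through verbatim.
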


\begin{proof}
The proof is analogous to that of Lemma~\ref{lemma2},
since $\widehat{\sigma}_1(\xi,\eta)=q(\xi,\eta)\widehat{\sigma}(\xi,\eta)$
and $q(\xi,\eta)$ never vanishes.
\end{proof}

\begin{Th}
\label{cor2}
Let $B(x,y,D_x,D_y)$ be a linear partial differential operator with polynomial
coefficients.
Let $\sigma=\F^{-1}(e^{-iP(\xi,\eta)})\in\Sch'(\R^2)$ for some
$P\in\R[\xi,\eta]$ and $\sigma_1=q(D_1,D_2)\sigma$
for some $q\in\C[\xi,\eta]$ never vanishing on $\R^2$. Then $Q^{(\sigma)}[w]=
\sigma*\Wig[w]$ and $Q^{(\sigma_1)}[w]=\sigma_1*\Wig[w]$ satisfy, for
$w\in\Sch'(\R^2)$,
\beqs
\label{35}
Q^{(\sigma_1)}[Bw]=\widetilde{AB}\,Q^{(\sigma)}[w],
\eeqs
where $A$ is the operator defined by $A(M_1,M_2,D_1,D_2)=q(D_1+D_2,M_2-M_1)$,
 and $\widetilde{AB}$ is
obtained from $AB$ as in \eqref{31}.
Moreover, $B$ is regular if and only if $\widetilde{AB}$ is regular.
\end{Th}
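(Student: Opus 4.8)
The plan is to first establish the identity \eqref{35} and then deduce the equivalence of regularity from it. For the identity, I would combine the two computations already available: formula \eqref{36}, which reads $Q^{(\sigma_1)}[w]=Q^{(\sigma)}[Aw]$ for $A(M_1,M_2,D_1,D_2)=q(D_1+D_2,M_2-M_1)$, and Theorem~\ref{th2}, which says $Q^{(\sigma)}[Cw]=\tilde C\,Q^{(\sigma)}[w]$ for any linear partial differential operator $C$ with polynomial coefficients, $\tilde C$ being obtained from $C$ via \eqref{31}. Applying \eqref{36} with $Bw$ in place of $w$ gives $Q^{(\sigma_1)}[Bw]=Q^{(\sigma)}[ABw]$, and then Theorem~\ref{th2} applied to the operator $C=AB$ yields $Q^{(\sigma)}[ABw]=\widetilde{AB}\,Q^{(\sigma)}[w]$. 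Chaining these gives \eqref{35}. One should note that $A$, being $q(D_1+D_2,M_2-M_1)$, indeed has polynomial coefficients, so $AB$ is again such an operator and Theorem~\ref{th2} applies; this is the routine check that makes the composition legitimate.

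For the equivalence of regularity, I would argue as in the proofs of Theorems~\ref{th3} and~\ref{th4}, but now exploiting that \emph{both} $Q^{(\sigma)}$ and $Q^{(\sigma_1)}$ are invertible, regular, and map $\Sch$ into $\Sch$ (Lemma~\ref{lemma2} and Proposition~\ref{prop3}). Suppose first $B$ is regular and assume $\widetilde{AB}\,u\in\Sch$ for some $u\in\Sch'$. By surjectivity of $Q^{(\sigma)}$ there is $w\in\Sch'$ with $u=Q^{(\sigma)}[w]$; then \eqref{35} gives $Q^{(\sigma_1)}[Bw]=\widetilde{AB}\,u\in\Sch$, so by regularity of $Q^{(\sigma_1)}$ we get $Bw\in\Sch$, hence $w\in\Sch$ by regularity of $B$, and finally $u=Q^{(\sigma)}[w]\in\Sch$ by Lemma~\ref{lemma2}~$(iii)$. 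For the converse, suppose $\widetilde{AB}$ is regular and assume $Bw\in\Sch$ for some $w\in\Sch'$. Then $Q^{(\sigma_1)}[Bw]\in\Sch$ by Proposition~\ref{prop3}~$(iii)$, so by \eqref{35} $\widetilde{AB}\,Q^{(\sigma)}[w]\in\Sch$; regularity of $\widetilde{AB}$ forces $Q^{(\sigma)}[w]\in\Sch$, and then regularity of $Q^{(\sigma)}$ (Lemma~\ref{lemma2}~$(ii)$) gives $w\in\Sch$.

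The only subtlety — and what I would treat most carefully — is making sure the substitution $w\mapsto Bw$ in \eqref{36} is valid for $w\in\Sch'$, not merely $w\in\Sch$: \eqref{36} was derived for $w\in\Sch(\R^2)$, so one must either observe that both sides of \eqref{36}, read as identities between operators $\Sch'\to\Sch'$, extend by density/continuity, or re-derive it distributionally using that $q(D_1,D_2)\sigma$ acts on $\Wig[w]$ for $w\in\Sch'$ exactly as in the chain of equalities in \eqref{36} (each step being either the commutation of a constant-coefficient operator with convolution or Lemma~\ref{lemma1}, all of which hold on $\Sch'$). Granting that, the proof is a short assembly of already-proven facts. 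I would write it as: "By \eqref{36} applied with $Bw$ in place of $w$, and then Theorem~\ref{th2} applied to the operator $AB$, we obtain \eqref{35}. The equivalence of regularity then follows exactly as in Theorems~\ref{th3} and~\ref{th4}, using Proposition~\ref{prop3} in place of Lemma~\ref{lemma2} where $Q^{(\sigma_1)}$ is involved."
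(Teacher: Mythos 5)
Your proof is correct and follows essentially the same route as the paper: the identity \eqref{35} is obtained by chaining \eqref{36} (with $Bw$ in place of $w$) with Theorem~\ref{th2} applied to $AB$, and the two directions of the regularity equivalence are argued exactly as in the paper, using the surjectivity and regularity of $Q^{(\sigma)}$ and $Q^{(\sigma_1)}$ from Lemma~\ref{lemma2} and Proposition~\ref{prop3}. Your added remark about justifying \eqref{36} for $w\in\Sch'$ (rather than only $w\in\Sch$) is a point the paper passes over silently, and your proposed resolution is the right one.
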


\begin{proof}
The equality \eqref{35} follows from \eqref{36} and Theorem~\ref{th2}.

Assume, now, that $B$ is regular. We prove that $\widetilde{AB}$ is regular.
Let $\widetilde{AB}u\in\Sch$ for some $u\in\Sch'$.
Since $Q^{(\sigma)}$ is surjective because of Lemma~\ref{lemma2}, there
exists $w\in\Sch'$ such that $u=Q^{(\sigma)}[w]$.
By \eqref{35}
\beqs
\label{37}
Q^{(\sigma_1)}[Bw]=\widetilde{AB}\,Q^{(\sigma)}[w]=\widetilde{AB}u\in\Sch.
\eeqs
But $Q^{(\sigma_1)}$ is regular by Proposition~\ref{prop3}~$(ii)$ and hence
$Bw\in\Sch$.
Therefore $w\in\Sch$ since $B$ is regular by assumption.
Then also $u=Q^{(\sigma)}[w]\in\Sch$ by Lemma~\ref{lemma2}~$(iii)$.

Reciprocally, let $\widetilde{AB}$ be regular. We prove that $B$ is regular.
Let $Bw\in\Sch$ with $w\in\Sch'$. Since $Q^{(\sigma_1)}:\ \Sch\to\Sch$
by Proposition~\ref{prop3}~$(iii)$, then $\widetilde{AB}Q^{(\sigma)}[w]
=Q^{(\sigma_1)}[Bw]\in\Sch$ and hence $Q^{(\sigma)}[w]\in\Sch$ by
the regularity of $\widetilde{AB}$.
But $Q^{(\sigma)}$ is regular by Lemma~\ref{lemma2}~$(ii)$ and therefore
$w\in\Sch$.
\end{proof}

\section{Time-frequency representations in the
Cohen's class with kernel in $\Sch'_\omega$}
\label{secSomega}

We now want to obtain similar results in the class $\Sch_\omega$. We start by
defining the class of weights that we consider.

\begin{Def}
\label{def2}
A {\em non-quasianalytic weight function} is a continuous increasing
function $\omega:\ [0,+\infty)\to[0,+\infty)$ satisfying the
following properties:
\begin{itemize}
\item[($\alpha$)]
$\ds\exists\ L>0\ \mbox{s.t.}\ \omega(2t)\leq L(\omega(t)+1)\
  \forall t\geq0$;
\item[($\beta$)]
\vspace{0.5mm}
$\ds\int_1^{+\infty}\frac{\omega(t)}{t^2}dt<+\infty$;
\item[($\gamma$)]
\vspace{0.5mm}
$\exists\ a\in\R,\,b>0$ s.t.
$\quad\ds
\omega(t)\geq a+b\log(1+t)\qquad\forall t\geq 0$;
\item[($\delta$)]
$\varphi_\omega:\ t\mapsto\omega(e^t)$ is convex.
\end{itemize}
We then define $\omega(\xi)=\omega(\vert\xi\vert)$ for $\xi\in\C^n$.
\end{Def}

\begin{Rem}
\begin{em}
Condition $(\beta)$ is the condition of non-quasianalyticity and guarantees that the spaces $\D_{(\omega)}(K)$ defined in \eqref{domegaK} below are non-trivial for any compact set $K\subset \R^n$ with non-empty interior (see \cite[Remark 3.2(1)]{BMT}). When condition $(\beta)$ is not satisfied we say that the weight $\omega$ is
{\em quasianalytic}.
\end{em}
\end{Rem}

The function $\varphi_\omega$ of condition $(\delta)$ clearly depends on
$\omega$; for convenience we shall simply write $\varphi$ instead of
$\varphi_\omega$.

\begin{Def}
\label{def3}
  For a weight $\omega$ as in Definition \ref{def2} we define
  $\Sch_\omega(\R^n)$ as
  the set of all $u\in L^1(\R^n)$ such that $u,\widehat{u}\in C^\infty(\R^n)$ and
\begin{itemize}
\item[(i)] $\forall\lambda>0,\,\alpha\in\N^n_0:\qquad\ds
\sup_{\R^n}e^{\lambda\omega(x)}|D^\alpha u(x)|<+\infty$
\item[(ii)]
$\forall\lambda>0,\,\alpha\in\N^n_0:\qquad\ds
\sup_{\R^n}e^{\lambda\omega(\xi)}|D^\alpha\widehat{u}(\xi)|<+\infty.$
\end{itemize}
As usual, the corresponding dual space is denoted by
$\Sch^\prime_\omega(\R^n)$ and is the set of all the linear and continuous
functionals $u:\Sch_\omega(\R^n)\to\C$. We say that an element of
$\Sch_\omega^\prime(\R^n)$ is an ``$\omega$-temperate distribution''.
\end{Def}

\begin{Rem}
\label{comparison-weights}{\rm
In Definition \ref{def2} we consider weight functions in the sense of \cite{BMT}, then the weights are not necessarily subadditive in general as in \cite{B}. On the other hand, we relax condition $(\gamma)$ with respect to \cite{BMT} since we work only in the Beurling setting, as in \cite{B}.
}
\end{Rem}

Following \cite{BMT}, we define the {\em Young conjugate} $\varphi^*$ of $\varphi$ as
$$
\varphi^*(s):=\sup_{t\geq 0}\{st-\varphi(t)\},
$$
for all $s\geq 0$.
We notice that since we relax condition $(\gamma)$ with respect to
\cite{BMT}, the main properties of $\varphi^*$ hold, but
$\varphi^*(s)$ may take the value $+\infty$ for some
$s$. In this case the expressions involving $\varphi^*$ shall assume a formal
meaning; for example, if $\varphi^*(s_0)=+\infty$, then
$e^{\varphi^*(s_0)}=+\infty$, $e^{-\varphi^*(s_0)}=0$, and so on. From
Fenchel-Moreau Theorem (cf. for example \cite{BL}) we have that $\varphi^*$
is convex and $\varphi^{**}=\varphi$. Moreover, since we can assume without
loss of generality that $\omega$ vanishes on $[0,1]$ we have that
$\varphi^*(s)/s$ is increasing (cf. Lemma 1.5 of \cite{BMT}).

We state the next result, that is well-known in the case of
weights of Braun, Meise and Taylor~\cite{BMT}, and it holds also for weights
as in
Definition \ref{def2} since it is independent of condition $(\gamma)$
(for the proof we refer, for instance, to \cite[Prop. 2.1(e) and Rem. 2.2]{BJ2}):

\begin{Lemma}
\label{estimate-weight}
Let $\omega$ be a weight function
and $D$ be a constant such that
$\omega(et)\leq D\left(\omega(t)+1\right)$ for every $t\geq 0$
(such constant exists from condition $(\alpha)$).
Fix $\lambda,\rho>0$; then for every
$0<\lambda^\prime\leq \lambda/D^{[\log\rho+1]}$ we have
\beqsn
\rho^je^{\lambda\varphi^*\left(\frac j\lambda\right)}\leq
\Lambda_{\rho,\lambda} e^{\lambda'\varphi^*\left(\frac {j}{\lambda'}\right)},
\qquad\forall j\in\N_0,
\eeqsn
with $\Lambda_{\rho,\lambda}=\exp\{\lambda[\log\rho+1]\}$, where $[\log\rho+1]$
is the integer part of $\log\rho+1$.
\end{Lemma}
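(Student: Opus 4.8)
The plan is to exponentiate and reduce the claim to the single inequality
\[
j\log\rho+\lambda\varphi^*\Big(\frac j\lambda\Big)\ \le\ \lambda[\log\rho+1]+\lambda'\varphi^*\Big(\frac j{\lambda'}\Big),\qquad j\in\N_0,
\]
since $\log\Lambda_{\rho,\lambda}=\lambda[\log\rho+1]$. One may assume all terms finite: if $\varphi^*(j/\lambda')=+\infty$ the inequality is trivial, while if it is finite then so is $\varphi^*(j/\lambda)$, because $\varphi^*$ is nondecreasing and $j/\lambda\le j/\lambda'$. Throughout I put $k:=[\log\rho+1]$, note $\log\rho\le k$ (and, in the case of interest $\rho\ge1$, $k\ge1$), and use that we may take $\omega\equiv0$ on $[0,1]$, so that $\varphi(t)=\omega(e^t)$ is nondecreasing, nonnegative and vanishes for $t\le0$; I also take $D\ge1$ (if a smaller constant works, so does $1$).

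First I would write $\lambda\varphi^*(j/\lambda)=\sup_{t\ge0}\big(jt-\lambda\varphi(t)\big)$ and absorb the factor $\rho^j$: after the change of variable $u=t+\log\rho$,
\[
j\log\rho+\lambda\varphi^*\Big(\frac j\lambda\Big)=\sup_{t\ge0}\big(j(t+\log\rho)-\lambda\varphi(t)\big)=\sup_{u\ge\log\rho}\big(ju-\lambda\varphi(u-\log\rho)\big).
\]
It then suffices to establish the pointwise estimate $\lambda\varphi(u-\log\rho)\ge\lambda'\varphi(u)-\lambda k$ for every $u\ge\log\rho$.

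For that I would use condition $(\alpha)$ in the form $\varphi(s+1)\le D(\varphi(s)+1)$, which holds for every $s\in\R$ since $e^s>0$; iterating it $k$ times gives
\[
\varphi(s+k)\le D^k\varphi(s)+\sum_{i=1}^kD^i\le D^k\varphi(s)+kD^k,
\]
the last step using $D\ge1$. Since $\log\rho\le k$ and $\varphi$ is nondecreasing, $\varphi(u)\le\varphi\big((u-\log\rho)+k\big)$, so combining the two displays yields $\varphi(u-\log\rho)\ge D^{-k}\varphi(u)-k$; multiplying by $\lambda$ and invoking $\lambda'\le\lambda D^{-k}$ and $\varphi\ge0$ gives the pointwise estimate. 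Plugging it back,
\[
j\log\rho+\lambda\varphi^*\Big(\frac j\lambda\Big)\le\lambda k+\sup_{u\ge\log\rho}\big(ju-\lambda'\varphi(u)\big),
\]
and since $j\ge0$ and $\varphi$ vanishes on $(-\infty,0]$, the last supremum equals $\sup_{u\ge0}\big(ju-\lambda'\varphi(u)\big)=\lambda'\varphi^*(j/\lambda')$ (the part of the range below $0$, if present, contributes at most $0$). This proves the reduced inequality, hence the lemma.

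The one step I expect to require care is the bookkeeping with the integer part $k=[\log\rho+1]$: it must be large enough ($k\ge\log\rho$) to push $\varphi(u)$ past $\varphi((u-\log\rho)+k)$, yet the geometric constant $\sum_{i=1}^kD^i$ picked up along the $k$ iterations of $(\alpha)$ must still be absorbed by $kD^k$, and one must check that enlarging the supremum from $\{u\ge\log\rho\}$ to $\{u\ge0\}$ goes in the favourable direction — all of which is arranged precisely by the normalization $\omega\equiv0$ on $[0,1]$ and by $D\ge1$. The remaining manipulations are the routine translation between $\omega$ and its Young conjugate $\varphi^*$.
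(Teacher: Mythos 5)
Your proof is correct, and since the paper does not prove Lemma~\ref{estimate-weight} itself (it defers to \cite[Prop.~2.1(e) and Rem.~2.2]{BJ2}), your argument is in effect the standard one from that reference: absorb $\rho^j$ by shifting the variable in the supremum defining $\varphi^*$ by $\log\rho$, and compensate the shift by iterating $\varphi(s+1)\leq D(\varphi(s)+1)$, which is exactly where the factor $D^{[\log\rho+1]}$ in the admissible range of $\lambda'$ comes from. Your restriction to $\rho\geq 1$ (so that $k=[\log\rho+1]\geq 1$ and the iteration makes sense) is harmless: for $e^{-1}\leq\rho<1$ the statement is trivial from the monotonicity of $\varphi^*(s)/s$, for smaller $\rho$ the stated inequality actually fails at $j=0$ (a defect of the statement, not of your proof), and in the paper the lemma is only invoked with $\rho=2$.
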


%\begin{proof}
%Since $\varphi^*(s)/s$ is increasing and $\lambda\geq\lambda^\prime D$ we have
%\beqs
%\label{estimate1}
%N+\lambda\varphi^*\left(\frac{N}{\lambda}\right)\leq N+\lambda^\prime
%D\varphi^*\left(\frac{N}{\lambda^\prime D}\right).
%\eeqs
%Since $\varphi(r)=\omega(e^r)$ we have
%\beqsn
%\varphi^*\left(\frac{N}{\lambda^\prime}\right)\geq &&
%\sup_{r\geq 1}\left\{ \frac{N}{\lambda^\prime}r-\varphi(r)\right\} =
%\sup_{t\geq 0}\left\{\frac{N}{\lambda^\prime}t+\frac{N}{\lambda^\prime}-
%\omega(e e^t)\right\} \\
%\geq &&\frac{N}{\lambda^\prime}-D+D\sup_{t\geq 0}
%\left\{ \frac{N}{\lambda^\prime D}t-\varphi(t)\right\} =
%\frac{N}{\lambda^\prime}-D+D\varphi^*\left(\frac{N}{\lambda^\prime D}\right).
%\eeqsn
%Combining this last estimate with \eqref{estimate1} we get the conclusion.
%\end{proof}

\begin{Rem}{\rm
\label{S-omega-S}
Observe that for $\omega_0(t)=\log(1+t)$ the corresponding space
$\Sch_{\omega_0}(\R^n)$ coincides with the classical Schwartz space
$\Sch(\R^n)$. Moreover, the condition $(\gamma)$ in Definition \ref{def2}
ensures us that for every weight $\omega$ the space $\Sch_\omega(\R^n)$ is
contained in $\Sch(\R^n)$, and so we can rewrite the definition of
$\Sch_\omega(\R^n)$ as
$$
\Sch_\omega(\R^n)=\{ u\in\Sch(\R^n)\ \text{satisfying {\rm (i)} and {\rm (ii)}
  of Definition \ref{def3}}\}.
$$}
\end{Rem}

The following
characterization of the space $\Sch_\omega$ will be useful throughout this section. The theorem below extends the carachterizations of  $\Sch_\omega$ given in \cite{CKL,GZ} and shows different equivalent systems of seminorms that can be used in such space. We begin with a lemma.

\begin{Lemma}
\label{lt}
Let $\omega$ be a weight function as in Definition~\ref{def2}. Then, for every $\lambda>0$, $k\in\N$ and $t\ge 1$ we have:
\begin{itemize}
\item[(i)]
$\ds t^k e^{-\lambda\omega(t)}\leq e^{\lambda\varphi^*\left(\frac k\lambda\right)},$

\item[(ii)]
$\ds
\inf_{j\in\N_0}t^{-j}e^{\lambda\varphi^*\left(\frac{j}{\lambda}\right)}\leq
e^{-\big(\lambda-\frac{1}{b}\big)\omega(t)-a/b},
$ where $a,b$ are the constants of condition $(\gamma)$ of Definition~\ref{def2}.
\end{itemize}
\end{Lemma}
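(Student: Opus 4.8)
The plan is to prove both inequalities directly from the definition of the Young conjugate $\varphi^*(s)=\sup_{t\ge 0}\{st-\varphi(t)\}$, where $\varphi(t)=\omega(e^t)$.

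For part (i), the idea is to take logarithms and rewrite the claim. Since $t\ge 1$, write $t=e^s$ with $s=\log t\ge 0$. The left-hand side becomes $e^{ks}e^{-\lambda\omega(e^s)}=\exp\{ks-\lambda\varphi(s)\}=\exp\{\lambda(\tfrac k\lambda s-\varphi(s))\}$. By the definition of the Young conjugate, $\tfrac k\lambda s-\varphi(s)\le\sup_{s\ge 0}\{\tfrac k\lambda s-\varphi(s)\}=\varphi^*\!\left(\tfrac k\lambda\right)$, so the exponent is $\le\lambda\varphi^*\!\left(\tfrac k\lambda\right)$, which is exactly the claim. (If $\varphi^*(k/\lambda)=+\infty$ the inequality is trivial by the stated convention.) This step is essentially immediate.

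For part (ii), I would again take logarithms. Fix $t\ge 1$ and set $s=\log t\ge 0$. For each $j\in\N_0$ we have $t^{-j}e^{\lambda\varphi^*(j/\lambda)}=\exp\{-js+\lambda\varphi^*(j/\lambda)\}$, so the infimum over $j\in\N_0$ of the left-hand side equals $\exp\{\inf_{j\in\N_0}(\lambda\varphi^*(j/\lambda)-js)\}$. The task is thus to show $\inf_{j\in\N_0}\{\lambda\varphi^*(j/\lambda)-js\}\le-(\lambda-\tfrac1b)\omega(t)-\tfrac ab$. The natural route is: if we were allowed to take the infimum over all real $r\ge 0$ rather than over $j/\lambda$ with $j\in\N_0$, then $\inf_{r\ge 0}\{\lambda\varphi^*(r)-\lambda r s\}=-\lambda\sup_{r\ge 0}\{rs-\varphi^*(r)\}=-\lambda\varphi^{**}(s)=-\lambda\varphi(s)=-\lambda\omega(t)$ by the Fenchel--Moreau theorem ($\varphi^{**}=\varphi$), which is cited in the excerpt. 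The discrepancy between the continuous infimum and the discrete one (over $j\in\N_0$) is precisely where condition $(\gamma)$ enters, and this will be the main obstacle.

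To handle the discretization I would use condition $(\gamma)$: $\omega(t)\ge a+b\log(1+t)\ge a+b\log t=a+bs$, hence $s\le\tfrac1b(\omega(t)-a)$. The strategy is to choose a good integer $j$. Pick $j\in\N_0$ with $j\le\lambda s<j+1$ (using $s\ge 0$); then $j/\lambda\le s$ and $s-j/\lambda<1/\lambda$. Since $\varphi^*(r)/r$ is increasing (stated in the excerpt, after the Fenchel--Moreau remark), one controls $\varphi^*(j/\lambda)$ in terms of $\varphi^*(s)$, and then Young's inequality $\varphi^*(s)\le\varphi^*(s)$ together with $s\varphi^{*\prime}$-type bounds... more concretely, I expect the clean way is: by definition of $\varphi^*$ as a supremum, for the specific choice $r=s$ one has $\varphi^*(s)\ge s\cdot s'-\varphi(s')$ for all $s'$; but what is really needed is an upper bound relating $\lambda\varphi^*(j/\lambda)-js$ to $-\lambda\varphi(s)$ plus an error of size at most $\tfrac1b\omega(t)+\tfrac ab$. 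Using $j\ge\lambda s-1$, we get $-js\le-\lambda s^2+s$; combined with the bound $\lambda\varphi^*(j/\lambda)-\lambda\varphi^*(s)\le 0$ coming from monotonicity of $j\mapsto\varphi^*(j/\lambda)$, and the identity $\lambda\varphi^*(s)-\lambda s\cdot s\le-\lambda\varphi(s)$... I would assemble these, using $s\le\tfrac1b(\omega(t)-a)$ to absorb the linear-in-$s$ error term into $\tfrac1b\omega(t)+\tfrac ab$, arriving at the exponent $\le-(\lambda-\tfrac1b)\omega(t)-\tfrac ab$. The delicate bookkeeping of which term absorbs which is the crux; everything else is a routine application of convex-duality identities already quoted in the excerpt.
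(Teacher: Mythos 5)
Your proof of part (i) is correct and is exactly the paper's argument: substitute $s=\log t\ge 0$ and invoke the definition of $\varphi^*$.

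For part (ii) your skeleton is the right one (and the one the paper uses): compare the discrete infimum over $j\in\N_0$ with the continuous infimum over $r\ge 0$, evaluate the latter as $-\lambda\varphi^{**}(\log t)=-\lambda\omega(t)$ by Fenchel--Moreau, and absorb the discretization error with condition $(\gamma)$. But the concrete assembly you sketch does not close. You discretize at $r=s=\log t$, i.e.\ you take $j$ with $j\le\lambda s<j+1$, and then you need the inequality $\lambda\varphi^*(s)-\lambda s^2\le-\lambda\varphi(s)$. That is the \emph{reverse} of the Fenchel--Young inequality $\varphi(s)+\varphi^*(s)\ge s^2$, and it fails unless $s$ happens to lie in $\partial\varphi(s)$; for rapidly growing $\varphi$ (e.g.\ $\varphi(x)\sim e^{x}$, which is the typical case here) the maximizer $r^*$ of $r\mapsto rs-\varphi^*(r)$ is of order $\varphi'(s)$, far from $s$, so discretizing at $r=s$ gives a bound that is genuinely too weak, not just loosely estimated. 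The repair is to discretize at an \emph{arbitrary} $r\ge 0$ rather than at $r=s$: set $j=\lfloor\lambda r\rfloor$, so that $j/\lambda\le r$ and $j>\lambda r-1$; then monotonicity of $\varphi^*$ gives
$$\lambda\varphi^*\Bigl(\tfrac j\lambda\Bigr)-j\log t\;\le\;\lambda\bigl(\varphi^*(r)-r\log t\bigr)+\log t ,$$
and taking the infimum over $r\ge0$ yields $\inf_j\{\lambda\varphi^*(j/\lambda)-j\log t\}\le-\lambda\omega(t)+\log t$. Condition $(\gamma)$ gives $\log t\le\frac1b(\omega(t)-a)$, whence the exponent is at most $-(\lambda-\frac1b)\omega(t)-\frac ab$, as required. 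This is precisely the paper's computation, written there in supremum form: for each $r$ in the continuous supremum one exhibits a dominating discrete term, at the cost of a single additive $\log t$.
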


\begin{proof}
(i) For $t\ge 1$, we have:
$$k\log t-\lambda\omega(t)\le\sup_{t\ge 1} \{k\log t-\lambda\omega(t)\}=\lambda\sup_{s\ge 0}\left\{\frac{k}{\lambda}s-\varphi(s)\right\}=\lambda\varphi^{*}\left(\frac k\lambda\right).$$
(ii) For all $s,\lambda>0$ there is $j\in\N_0$ such that
$j\leq s\lambda<j+1$ and hence (cf. \cite{BMT}):
\beqs
\nonumber
\sup_{j\in\N_0}\left\{j\log t-\lambda\varphi^*\left(\frac j\lambda\right)\right\}=&&
\lambda\sup_{j\in\N_0}\left\{\frac{j+1}{\lambda}\log t-
\varphi^*\left(\frac j\lambda\right)\right\}-\log t\\
\nonumber
\geq&&\lambda\sup_{s\ge 0}\{s\log t-\varphi^*(s)\}-\log t\\
\nonumber
=&&\lambda\varphi^{**}(\log t)-\log t
=\lambda\omega(t)-\log t\\
\label{44}
\geq&&{\Bigl(\lambda-\frac{1}{b}\Bigr)\omega(t)+\frac{a}{b}}.\nonumber
\eeqs
\end{proof}

%%%%%%%%%%%%%%%%
\begin{Th}
\label{newprop49}
Let $u\in\Sch(\R^n)$ and $\omega$ a non-quasianalytic weight function.
Then $u\in\Sch_\omega$ if and only if one of the following
equivalent conditions is satisfied:
\begin{itemize}
\item[(1)]
$u$ satisfies the conditions:
\begin{itemize}
\item[$(i)$]
$\ds\forall\lambda>0,\alpha\in\N_0^n:\ \sup_{x\in\R^n}e^{\lambda\omega(x)}
|D^\alpha u(x)|<+\infty$;
\item[$(ii)$]
$\ds\forall\lambda>0,\alpha\in\N_0^n:\ \sup_{\xi\in\R^n}e^{\lambda\omega(\xi)}
|D^\alpha \widehat{u}(\xi)|<+\infty$.
\end{itemize}
\item[(2)]
$u$ satisfies the conditions:
\begin{itemize}
\item[$(i)'$]
$\ds\forall\lambda>0,\alpha\in\N_0^n:\ \sup_{x\in\R^n}e^{\lambda\omega(x)}
|x^\alpha u(x)|<+\infty$;
\item[$(ii)'$]
$\ds\forall\lambda>0,\alpha\in\N_0^n:\ \sup_{\xi\in\R^n}e^{\lambda\omega(\xi)}
|\xi^\alpha \widehat{u}(\xi)|<+\infty$.
\end{itemize}
\item[(3)]
$u$ satisfies the conditions:
\begin{itemize}
\item[$(i)''$]
$\ds\forall\lambda>0:\ \sup_{x\in\R^n}e^{\lambda\omega(x)}
|u(x)|<+\infty$;
\item[$(ii)''$]
$\ds\forall\lambda>0:\ \sup_{\xi\in\R^n}e^{\lambda\omega(\xi)}
|\widehat{u}(\xi)|<+\infty$.
\end{itemize}
\item[(4)]
$u$ satisfies the conditions:
\begin{itemize}
\item[(a)]
$\ds \forall\beta\in\N_0^n,\lambda>0\ \exists C_{\beta,\lambda}>0:$
$$
\sup_{x\in\R^n}|x^\beta D^\alpha u(x)|
e^{-\lambda\varphi^*\left(\frac{|\alpha|}{\lambda}\right)}\leq C_{\beta,\lambda}
\qquad\forall\alpha\in\N_0^n;
$$
\item[(b)]
$\ds \forall\alpha\in\N_0^n,\mu>0\ \exists C_{\alpha,\mu}>0:$
$$
\sup_{x\in\R^n}|x^\beta D^\alpha u(x)|
e^{-\mu\varphi^*\left(\frac{|\beta|}{\mu}\right)}\leq C_{\alpha,\mu}
\qquad\forall\beta\in\N_0^n.
$$
\end{itemize}
\item[(5)]
$u$ satisfies the condition:
\begin{eqnarray*}
&&\forall\mu,\lambda>0\ \exists C_{\mu,\lambda}>0\ \mbox{s.t.}\\
&&\sup_{x\in\R^n}|x^\beta D^\alpha u(x)|
e^{-\lambda\varphi^*\left(\frac{|\alpha|}{\lambda}\right)}
e^{-\mu\varphi^*\left(\frac{|\beta|}{\mu}\right)}\leq C_{\mu,\lambda}
\qquad\forall\alpha,\beta\in\N_0^n.
\end{eqnarray*}
\item[(6)]
$u$ satisfies the condition:
\begin{eqnarray*}
&&\forall\lambda>0\ \exists C_{\lambda}>0\ \mbox{s.t.}\\
&&\sup_{x\in\R^n}|x^\beta D^\alpha u(x)|
e^{-\lambda\varphi^*\left(\frac{|\alpha+\beta|}{\lambda}\right)}
\leq C_{\lambda}
\qquad\forall\alpha,\beta\in\N_0^n.
\end{eqnarray*}
\end{itemize}
\end{Th}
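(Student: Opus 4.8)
The plan is to establish the chain of implications $(1)\Leftrightarrow(2)\Leftrightarrow(3)$ first (these are the ``monomial/derivative decoupled'' conditions), then connect to the ``$\varphi^*$-form'' conditions $(4),(5),(6)$, using Lemma~\ref{lt} as the bridge between powers $t^k$ and the weight $e^{\lambda\omega(t)}$. Throughout, the ambient assumption $u\in\Sch(\R^n)$ guarantees that all the quantities below are finite for each fixed $\alpha,\beta$; what has to be tracked is the \emph{uniformity} in the multi-indices and the quantifier structure in $\lambda,\mu$. By Remark~\ref{S-omega-S}, condition $(1)$ is exactly the definition of $\Sch_\omega$, so it suffices to prove that each of $(2)$--$(6)$ is equivalent to $(1)$.

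First I would treat $(1)\Leftrightarrow(3)$ on the ``space side'': the implication $(i)\Rightarrow(i)''$ is trivial (take $\alpha=0$), and for $(i)''\Rightarrow(i)$ one writes $D^\alpha u = \F^{-1}(\xi^\alpha\widehat u)$ (up to constants) and estimates $\|e^{\lambda\omega}D^\alpha u\|_\infty$ by $\|\,|\xi|^{|\alpha|}e^{\lambda\omega}$-weighted $L^1$ norm of $\widehat u$, after using property $(\alpha)$ of $\omega$ to absorb the shift from $e^{\lambda\omega(x)}$ under the inverse transform into a slightly larger $\lambda$ and a one-dimensional integrability gain coming from $(\gamma)$ (the $\log$ lower bound makes $e^{\varepsilon\omega}$ beat any polynomial). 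The symmetric argument on the Fourier side gives $(ii)\Leftrightarrow(ii)''$. The equivalence $(1)\Leftrightarrow(2)$ is the same mechanism: $x^\alpha u$ versus $D^\alpha u$ are interchanged by the Fourier transform, so $(i)'$ is $(ii)$-type information on $\widehat u$ and $(ii)'$ is $(i)$-type information on $u$; combining the four conditions (i),(ii),(i)$'$,(ii)$'$ closes the loop.

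Next, for the $\varphi^*$-conditions I would show $(2)\Leftrightarrow(4)$, then $(4)\Leftrightarrow(5)\Leftrightarrow(6)$. For $(2)\Rightarrow(4)(a)$: fix $\beta,\lambda$; by $(i)'$ applied with exponent $2\lambda$ (say) on the function $D^\alpha u$—wait, rather on $x^\beta u$ after commuting—we get $|x^\beta D^\alpha u(x)|\le C\, |x|^{-|\alpha|-n-1}\cdot(\text{stuff})$; the clean route is: from $(ii)$ with a large parameter, $|\xi^{\alpha}(\text{monomials in }\widehat u)|$ is controlled, and transforming back and using Lemma~\ref{lt}(i) with $t=|x|$, $k=|\alpha|$ converts the factor $|x|^{|\alpha|}$ against $e^{-\lambda\omega(|x|)}$ into $e^{\lambda\varphi^*(|\alpha|/\lambda)}$, which is exactly the weight appearing in $(4)(a)$. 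Conversely $(4)(a)\Rightarrow(i)$: given $\lambda$ and $\alpha$, take $\beta=0$ in $(4)(a)$ with parameter, multiply by $x^\beta$ for $|\beta|$ up to some $N$, and sum; to recover $e^{\lambda\omega(x)}|D^\alpha u(x)|<\infty$ one uses Lemma~\ref{lt}(ii): $\inf_j |x|^{-j}e^{\mu\varphi^*(j/\mu)}\le e^{-(\mu-1/b)\omega(|x|)-a/b}$, so choosing $\mu$ large enough that $\mu-1/b\ge\lambda$ and picking, for each $x$, the optimal $j=|\beta|$ yields the weight $e^{\lambda\omega(x)}$. The Fourier-side conditions $(4)(b)$ are handled the same way with roles of $x,\xi$ (equivalently $\alpha,\beta$) swapped. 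Finally $(4)\Leftrightarrow(5)$ is a bookkeeping step: $(5)\Rightarrow(4)$ is immediate (fix one of the two parameters), and $(4)\Rightarrow(5)$ follows by applying $(4)(a)$ with parameter $\lambda$ and $(4)(b)$ with parameter $\mu$ and taking geometric means, or more simply by noting $(4)(a)$ already gives the bound with the constant $C_{\beta,\lambda}$ depending on $\beta$, then using $(4)(b)$ to absorb that $\beta$-dependence into the factor $e^{\mu\varphi^*(|\beta|/\mu)}$; one must check the constant stays uniform, which is where Lemma~\ref{estimate-weight} (rescaling $\varphi^*$) is invoked. The equivalence $(5)\Leftrightarrow(6)$ rests on the elementary subadditivity-type inequality $\varphi^*(|\alpha|/\lambda)+\varphi^*(|\beta|/\lambda)\le\varphi^*(|\alpha+\beta|/(\lambda/2))+$const (from convexity of $\varphi^*$ and $\varphi^*(s)/s$ increasing), together with the reverse $\varphi^*((|\alpha|+|\beta|)/\lambda)\le\varphi^*(|\alpha|/\lambda')+\varphi^*(|\beta|/\lambda')$-type bound, again via Lemma~\ref{estimate-weight}.

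The main obstacle I anticipate is not any single implication but the \emph{uniform control of constants} when passing between conditions that quantify $\lambda$ (resp.\ $\mu$) universally and conditions where the constant $C_{\beta,\lambda}$ is allowed to depend on a multi-index: the delicate point is $(4)\Rightarrow(5)$ and $(5)\Rightarrow(6)$, where one must re-sum over $\beta$ (or over the splitting of $\alpha+\beta$) without losing the exponential-type control, and this is precisely where Lemma~\ref{estimate-weight} with its constant $\Lambda_{\rho,\lambda}$ is essential to reabsorb geometric-series factors $\rho^j$ into a harmless change of the parameter in $\varphi^*$. A secondary technical nuisance is that, since condition $(\gamma)$ is relaxed, $\varphi^*(s)$ may be $+\infty$ for small $s$; I would dispatch this by the convention already fixed in the paper ($e^{-\varphi^*(s_0)}=0$, etc.) and by noting that all sup's over $x\in\R^n$ with $|x|\ge 1$ are the only ones that matter (the region $|x|\le1$ being trivially bounded since $u\in\Sch$), so Lemma~\ref{lt}, stated for $t\ge1$, applies directly.
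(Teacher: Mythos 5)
Your proposal correctly identifies the easy implications ($(2)\Leftrightarrow(3)$ via condition $(\gamma)$, $(5)\Rightarrow(4)$ and $(5)\Leftrightarrow(6)$ by convexity of $\varphi^*$, and the use of Lemma~\ref{lt} to pass between powers and the weight in $(1)\Leftrightarrow(4)$), but it has two genuine gaps at precisely the two nontrivial steps. First, for $(3)\Rightarrow(1)$ you propose to write $D^\alpha u=\F^{-1}(\xi^\alpha\widehat u)$ and bound $e^{\lambda\omega(x)}|D^\alpha u(x)|$ by a weighted $L^1$ norm of $\widehat u$. This cannot work: since $|e^{i\langle x,\xi\rangle}|=1$, taking the modulus inside the inversion integral destroys all dependence on $x$, so you only obtain boundedness of $D^\alpha u$, not the decay $e^{-\lambda\omega(x)}$. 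There is no ``shift to absorb'' via property $(\alpha)$; to extract decay in $x$ one would need to integrate by parts in $\xi$, which requires control of $D^\beta\widehat u$ --- exactly the information condition $(3)$ does \emph{not} provide. The implication $(3)\Rightarrow(1)$ (decay of $u$ and $\widehat u$ alone forces decay of all derivatives on both sides) is the substantial theorem of Gr\"ochenig--Zimmermann/Chung--Kim--Lee, and the paper does not reprove it: it cites \cite[Cor.~2.9]{GZ}.

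Second, your argument for $(4)\Rightarrow(5)$ fails. Taking geometric means of the bounds in $(4)(a)$ and $(4)(b)$ produces a constant $\sqrt{C_{\beta,\lambda}\,C_{\alpha,\mu}}$ that still depends on \emph{both} multi-indices, and nothing in $(4)$ prevents $C_{\beta,\lambda}$ from growing in $\beta$ faster than $e^{\mu\varphi^*(|\beta|/\mu)}$ for every $\mu$; likewise ``using $(4)(b)$ to absorb the $\beta$-dependence'' presupposes the very uniformity you are trying to prove. Lemma~\ref{estimate-weight} only rescales factors of the form $\rho^{j}e^{\lambda\varphi^*(j/\lambda)}$ and cannot repair an uncontrolled dependence of the constants on the multi-indices. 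The paper's proof of this step is a genuine interpolation argument in the style of \cite{CCK}: one passes to $L^2$ norms via Sobolev embedding, writes $\|x^\beta D^\alpha u\|_{L^2}^2=\int|(x^{2\beta}\partial^\alpha u)\cdot\partial^\alpha u|$, integrates by parts to split it into products $\|\partial^{2\alpha-\gamma}u\|_{L^2}\|x^{2\beta-\gamma}u\|_{L^2}$ of \emph{pure-derivative} and \emph{pure-monomial} norms (each controlled with constants independent of the other index, by $(4)(a)$ with $\beta=0$ and $(4)(b)$ with $\alpha=0$), and then uses \eqref{fact} and the convexity of $\varphi^*$ before returning to $L^\infty$. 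Some argument of this bilinear type is indispensable; your proposal, which you yourself flag as the main obstacle, does not supply one.
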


\begin{proof}

  Note first that $u\in\Sch_\omega(\R^n)$ if and only if $u\in\Sch(\R^n)$ and
  satisfies $(1)$ by Remark~\ref{S-omega-S}.

  $(1)\Leftrightarrow(3)$ is Corollary 2.9 of \cite{GZ}.

  $(2)\Rightarrow(3)$ follows taking $\alpha=0$ in $(2)$.

  $(3)\Rightarrow(2)$ follows from condition $(\gamma)$ of $\omega$,
 since
  \beqsn
  |e^{\lambda\omega(x)}x^\alpha|\leq e^{-\frac{a\alpha}{b}}
  e^{\left(\frac\alpha b+\lambda\right)\omega(x)}.
  \eeqsn

$(1)\Rightarrow(4)$: let us first estimate
\beqs
\nonumber
|x^\beta D^\alpha u(x)|=&&(2\pi)^{-n}\left|\int\xi^\alpha\widehat{u}(\xi)x^\beta
e^{i\langle x,\xi\rangle}d\xi\right|\\
\nonumber
=&&(2\pi)^{-n}\left|\int\xi^\alpha\widehat{u}(\xi)D_\xi^\beta
e^{i\langle x,\xi\rangle}d\xi\right|\\
\nonumber
=&&(2\pi)^{-n}\left|\int D_\xi^\beta\left(\xi^{\alpha}\widehat{u}(\xi)\right)
e^{i\langle x,\xi\rangle}d\xi\right|\\
\nonumber
\leq&&\sum_{\afrac{\gamma\leq\beta}{\gamma\leq\alpha}}\binom\beta\gamma
\int|D_\xi^\gamma\xi^\alpha|\cdot|D_\xi^{\beta-\gamma}\widehat{u}(\xi)|d\xi\\
\nonumber
{\leq}&&\sum_{\afrac{\gamma\leq\beta}{\gamma\leq\alpha}}
\frac{\beta!}{\gamma!(\beta-\gamma)!}
\frac{\alpha!}{(\alpha-\gamma)!}\int|D_\xi^{\beta-\gamma}\widehat{u}(\xi)|
\cdot|\xi|^{|\alpha-\gamma|}d\xi\\
\label{40}
\leq&&2^{|\alpha|}\sum_{\afrac{\gamma\leq\beta}{\gamma\leq\alpha}}
\frac{\beta!}{(\beta-\gamma)!}\int
|D_\xi^{\beta-\gamma}\widehat{u}(\xi)|e^{{2\lambda}\omega(\xi)}
e^{-{\lambda}\omega(\xi)}e^{-\lambda\omega(\xi)+|\alpha-\gamma|\log|\xi|}d\xi.
\eeqs

Now, by condition $(ii)$ of $(1)$, for all $\gamma\leq \beta,$
\beqsn
|D_\xi^{\beta-\gamma}\widehat{u}(\xi)|e^{2\lambda\omega(\xi)}
\leq C_{\beta,\lambda}
\eeqsn
for some $C_{\beta,\lambda}>0$. Since we can assume without loss of generality that $|\xi|\ge 1$, we have by Lemma~\ref{lt}(i),
$$e^{-\lambda\omega(\xi)+|\alpha-\gamma|\log|\xi|}\le e^{-\lambda\omega(\xi)+|\alpha|\log|\xi|}\le e^{\lambda\varphi^*\left(\frac{|\alpha|}{\lambda}\right)}.$$ Therefore, substituting in \eqref{40}:
\beqs
\label{41}
|x^\beta D^\alpha u(x)|\leq C'_{\beta,\lambda}2^{|\alpha|}
e^{\lambda\varphi^*\left(\frac{|\alpha|}{\lambda}\right)}
\int e^{-{\lambda}\omega(\xi)}d\xi
\eeqs
for some $C'_{\beta,\lambda}>0$.

But from Lemma \ref{estimate-weight} we have that for all
$0<\lambda'\leq\lambda/D$, there exists $C_{\lambda'}>0$ such that
\beqs
\label{42}
2^{|\alpha|}e^{\lambda\varphi^*\left(\frac{|\alpha|}{\lambda}\right)}\leq
 C_{\lambda'}e^{\lambda'\varphi^*\left(\frac{|\alpha|}{\lambda'}\right)},
\eeqs
where $C_{\lambda^\prime}=e^{\lambda^\prime D}$. Moreover
\beqs
\label{43}
\int e^{-{\lambda}\omega(\xi)}d\xi\in\R {\qquad\text{for}\
  \lambda\ \text{sufficiently large},}
\eeqs
by condition $(\gamma)$.

Substituting \eqref{42} and \eqref{43} in \eqref{41} we finally have that
for all $\beta\in\N_0^n$, $\lambda'>0$ there exists $C_{\beta,\lambda'}>0$
such that
\beqsn
|x^\beta D^\alpha u(x)|\leq C_{\beta,\lambda'}
e^{\lambda'\varphi^*\left(\frac{|\alpha|}{\lambda'}\right)}
\qquad\forall\alpha\in\N_0^n,
\eeqsn
so that condition $(a)$ of $(4)$ is satisfied.

Condition $(b)$ of $(4)$ easily follows proceeding as before by condition
$(i)$ of $(1)$:
\beqsn
|x^\beta D^\alpha u(x)|e^{-\mu\varphi^*\left(\frac{|\beta|}{\mu}\right)}
\leq&&|D^\alpha u(x)|e^{|\beta|\log|x|-\mu\varphi^*\left(\frac{|\beta|}{\mu}\right)}\\
\leq&&|D^\alpha u(x)|e^{\mu\omega(x)}\leq C_{\alpha,\mu}.
\eeqsn

$(4)\Rightarrow(1)$: by $(4)$(b):
\beqsn
|D^\alpha u(x)|=&&|x^\beta D^\alpha u(x)|
e^{-\mu\varphi^*\left(\frac{|\beta|}{\mu}\right)}
e^{-|\beta|\log|x|+\mu\varphi^*\left(\frac{|\beta|}{\mu}\right)}\\
\leq&& {C_{\alpha,\mu}e^{-|\beta|\log|x|+\mu\varphi^*\left(\frac{|\beta|}{\mu}\right)}}
\qquad\forall\alpha,\beta\in\N_0^n,\ \mu>0.
\eeqsn
Now, since the constant $C_{\alpha,\mu}$ of condition $(b)$ of $(4)$ does not depend on $\beta$, by Lemma~\ref{lt}(ii) we get condition $(i)$ of
$(1)$:
\beqsn
|D^\alpha u(x)|\leq {C^\prime_{\alpha,\mu}e^{-(\mu-\frac{1}{b})\omega(x)}}
\qquad\forall\alpha\in\N_0^n,\ \mu>0,
\eeqsn
{where $C^\prime_{\alpha,\mu}=C_{\alpha,\mu}e^{-a/b}$.}
Let us now prove also condition $(ii)$ of $(1)$:
\beqs
\nonumber
|D^\beta_\xi\widehat{u}(\xi)|=&&|\widehat{x^\beta u}(\xi)|
=\left|\int x^\beta u(x)e^{-i\langle x,\xi\rangle}dx\right|\\
\nonumber
=&&\left|\int D_x^\alpha(e^{-i\langle x,\xi\rangle})\frac{1}{\xi^\alpha}
x^\beta u(x)dx\right|\\
\nonumber
=&&\left|\int\frac{1}{\xi^\alpha}D_x^\alpha\left(x^\beta u(x)\right)
e^{-i\langle x,\xi\rangle}dx\right|\\
\nonumber
\leq&&\sum_{\afrac{\gamma\leq\alpha}{\gamma\leq\beta}}\binom\alpha\gamma
\int|D_x^\gamma x^\beta|\cdot|D_x^{\alpha-\gamma}u(x)|e^{-|\alpha|\log|\xi|}dx\\
\label{45}
\leq&&\sum_{\afrac{\gamma\leq\alpha}{\gamma\leq\beta}}\binom\alpha\gamma
\frac{\beta!}{(\beta-\gamma)!}\int\langle x\rangle^{|\beta-\gamma|+n+1}
|D_x^{\alpha-\gamma}u(x)|
e^{-\lambda\varphi^*\left(\frac{|\alpha-\gamma|}{\lambda}\right)}
e^{\lambda\varphi^*\left(\frac{|\alpha-\gamma|}{\lambda}\right)-|\alpha|\log|\xi|}
\frac{1}{\langle x\rangle^{n+1}}dx
\eeqs
where $\langle x\rangle:=\sqrt{1+|x|^2}$.

By condition $(a)$ of $(4)$,
\beqs
\label{(a)}
\langle x\rangle^{|\beta-\gamma|+n+1}
|D_x^{\alpha-\gamma}u(x)|
e^{-\lambda\varphi^*\left(\frac{|\alpha-\gamma|}{\lambda}\right)}\leq
C_{\beta,\lambda}.
\eeqs
Moreover, by \eqref{42}
for all $0<\lambda'\leq\lambda/{D}$ there exists $C_{\lambda'}>0$ such
that:
\beqs
\label{(b)}
2^{|\alpha|}
e^{\lambda\varphi^*\left(\frac{|\alpha-\gamma|}{\lambda}\right)-|\alpha|\log|\xi|}
\leq C_{\lambda'}
e^{\lambda'\varphi^*\left(\frac{|\alpha|}{\lambda'}\right)-|\alpha|\log|\xi|}.
\eeqs
  Since
$\binom\alpha\gamma\leq 2^{|\alpha|}$, proceeding as before, taking the infimum in $|\alpha|$, by Lemma~\ref{lt}(ii),
we have
\beqsn
|D^\beta\widehat{u}(\xi)|\leq {C_{\beta,\lambda''}e^{-\lambda''\omega(\xi)}
\qquad\forall\beta\in\N_0^n,\,\lambda''>0}
\eeqsn
since $\int\langle x\rangle^{-n-1}dx$ is a constant.

This proves condition $(ii)$ of $(1)$.

$(5)\Rightarrow(4)$ is trivial.

$(4)\Rightarrow(5)$:
let us first remark that there are relations between the $L^\infty$
norms of $x^\beta D^\alpha u$
and the $L^2$ norms of $x^\beta D^\alpha u$. In fact, writing
$N=\left[\frac{n+1}{4}\right]+1$, we have
\beqsn
\|x^\beta D^\alpha u\|_{L^2(\R^n)}^2=&&\int|x^\beta D^\alpha u(x)|^2dx\\
=&&\int\left|x^\beta (1+|x|^2)^ND^\alpha u(x)\right|^2
\frac{1}{(1+|x|^2)^{2N}}dx\\
\leq&& c\left\|x^\beta(1+|x|^2)^ND^\alpha u\right\|_{L^\infty(\R^n)}^2
\eeqsn
for some $c>0$. We then have
\beqs
\label{L2Linfty}
\|x^\beta D^\alpha u\|_{L^2(\R^n)}^2\leq c\sum_{|\gamma|\leq N}
\frac{N!}{\gamma!(N-|\gamma|)!}\left\| x^{\beta+2\gamma}D^\alpha
u\right\|_{L^\infty(\R^n)}^2.
\eeqs

Reciprocally by Sobolev inequality (cf.
\cite[Ch. 3, Lemma 2.5]{KG}) there exists $C>0$ such that
\beqs
\label{LinftyL2}
\|x^\beta D^\alpha u\|_{L^\infty(\R^n)}\leq C\|x^\beta D^\alpha u\|_{H^s(\R^n)}
\eeqs
for $s>n/2$ (note that $x^\beta D^\alpha u\in L^\infty(\R^n)$ for every
$\alpha,\beta\in\N_0^n$ implies that $x^\beta D^\alpha u\in H^s(\R^n)$ for
every
$\alpha,\beta\in\N_0^n$ and for every $s>0$). From point (a) of (4) we
then have from \eqref{L2Linfty} that, for every $\lambda>0$,
\beqs
\label{4a-modif}
\|x^\beta D^\alpha u\|_{L^2(\R^n)}^2\leq c\sum_{|\gamma|\leq N}
\frac{N!}{\gamma!(N-|\gamma|)!} C^2_{\beta+2\gamma,\lambda}
e^{2\lambda\varphi^*\left(\frac{|\alpha|}{\lambda}\right)}=\tilde{C}^2_{\beta,\lambda}
e^{2\lambda\varphi^*\left(\frac{|\alpha|}{\lambda}\right)},
\eeqs
where
\beqsn
\tilde{C}^2_{\beta,\lambda} = c\sum_{|\gamma|\leq N}
\frac{N!}{\gamma!(N-|\gamma|)!} C^2_{\beta+2\gamma,\lambda}
\eeqsn
depends only on $\beta$, $\lambda$ and the dimension $n$.
Now, from \eqref{L2Linfty}, the point (b) of (4)
(rewritten for convenience with $\mu^\prime$ instead of $\mu$) implies that
\beqsn
\|x^\beta D^\alpha u\|_{L^2(\R^n)}^2\leq c\sum_{|\gamma|\leq N}
\frac{N!}{\gamma!(N-|\gamma|)!} C^2_{\alpha,\mu^\prime}
e^{2\mu^\prime\varphi^*\left(\frac{|\beta+2\gamma|}{\mu^\prime}\right)};
\eeqsn
from the convexity of $\varphi^{*}$ we get:
\beqsn
e^{\mu^\prime\varphi^*\left(\frac{|\beta+2\gamma|}{\mu^\prime}\right)}\leq
e^{\frac{\mu^\prime}{2}\varphi^*\left(\frac{2|\beta|}{\mu^\prime}\right)}
e^{\frac{\mu^\prime}{2}\varphi^*\left(\frac{4|\gamma|}{\mu^\prime}\right)}.
\eeqsn
Then we obtain
\beqsn
\|x^\beta D^\alpha u\|_{L^2(\R^n)}^2\leq c\sum_{|\gamma|\leq N}
\frac{N!}{\gamma!(N-|\gamma|)!} C^2_{\alpha,\mu^\prime}
e^{\mu^\prime\varphi^*\left(\frac{2|\beta|}{\mu^\prime}\right)}
e^{\mu^\prime\varphi^*\left(\frac{4|\gamma|}{\mu^\prime}\right)} = C^\prime_{\alpha,\mu^\prime}
e^{\mu^\prime\varphi^*\left(\frac{2|\beta|}{\mu^\prime}\right)},
\eeqsn
where
\beqsn
C^\prime_{\alpha,\mu^\prime}=c\sum_{|\gamma|\leq N}\frac{N!}{\gamma!(N-|\gamma|)!}
C^2_{\alpha,\mu^\prime}
e^{\mu^\prime\varphi^*\left(\frac{4|\gamma|}{\mu^\prime}\right)}
\eeqsn
depends only on $\alpha$, $\mu^\prime$ and the dimension $n$.
Then, writing $\mu:=\mu^\prime/2$ we obtain that for every $\alpha\in \N_0^n$
and for every $\mu>0$ there exists a constant $\tilde{C}_{\alpha,\mu}>0$
satisfying
\beqs
\label{4b-modif}
\|x^\beta D^\alpha u\|_{L^2(\R^n)}\leq \tilde{C}_{\alpha,\mu}
e^{\mu\varphi^*\left(\frac{|\beta|}{\mu}\right)}.
\eeqs
Now, we will use that
\begin{equation}
\gamma!\leq C_{\lambda} e^{\lambda \varphi^{*}(\frac{|\gamma|}{\lambda})},\label{fact}
\end{equation}
for all $\lambda>0$, $\gamma\in\N_{0}^{n}$ and some constant $C_{\lambda}$. This is true because $\omega(t)=o(t)$ as $t\to\infty$ (from condition $(\beta)$ of Definition~\ref{def2}). Therefore, from \eqref{4a-modif} and \eqref{4b-modif}, and following the same
idea as in \cite{CCK}, we thus estimate:
\beqsn
\|x^\beta D^\alpha u\|_{L^2(\R^n)}^2=&&\int_{\R^n}\left|\left(
x^{2\beta}\partial_x^\alpha u(x)
\right)\cdot\partial_x^\alpha u(x)\right|dx\\
\leq &&\sum_{\afrac{\gamma\leq2\beta}{\gamma\leq\alpha}}\binom\alpha\gamma
\binom{2\beta}{\gamma}\gamma!\|\partial^{2\alpha-\gamma}u(x)\|_{L^2(\R^n)}
\|x^{2\beta-\gamma}u(x)\|_{L^2(\R^n)}\\
\leq&&\sum_{\afrac{\gamma\leq2\beta}{\gamma\leq\alpha}}\binom\alpha\gamma
\binom{2\beta}{\gamma}\gamma!
\tilde{C}_{0,\lambda}e^{\lambda\varphi^*\left(\frac{|2\alpha-\gamma|}{\lambda}\right)}
\tilde{C}_{0,\mu}e^{\mu\varphi^*\left(\frac{|2\beta-\gamma|}{\mu}\right)}\\
\leq&&\sum_{\afrac{\gamma\leq2\beta}{\gamma\leq\alpha}}\binom\alpha\gamma
\binom{2\beta}{\gamma} C_\lambda e^{\lambda\varphi^*\left(\frac{|\gamma|}{\lambda}\right)}
e^{\lambda\varphi^*\left(\frac{|2\alpha-\gamma|}{\lambda}\right)}
\tilde{C}_{0,\mu}e^{\mu\varphi^*\left(\frac{|2\beta|}{\mu}\right)}\\
\leq&&2^{|\alpha|}2^{2|\beta|}C_\lambda \tilde{C}_{0,\mu}
e^{\lambda\varphi^*\left(\frac{|2\alpha|}{\lambda}\right)}
e^{\mu\varphi^*\left(\frac{|2\beta|}{\mu}\right)}\\
\leq&&C_{\lambda',\mu'}e^{\lambda'\varphi^*\left(\frac{|2\alpha|}{\lambda'}\right)}
e^{\mu'\varphi^*\left(\frac{|2\beta|}{\mu'}\right)}
\eeqsn
for some $\tilde{C}_{0,\lambda},\tilde{C}_{0,\mu},C_\lambda,C_{\lambda',\mu'}>0$,
because of the properties of $\varphi^*$ and \eqref{42}.
Extracting the square root and writing $\lambda=\lambda^\prime/2$ and
$\mu=\mu^\prime/2$ we have that for every $\lambda,\mu>0$ there exists a
constant $\tilde{C}_{\lambda,\mu}>0$ such that
\beqs
\label{5-modif}
\|x^\beta D^\alpha u\|_{L^2(\R^n)}\leq \tilde{C}_{\lambda,\mu}
e^{\lambda\varphi^*\left(\frac{|\alpha|}{\lambda}\right)}
e^{\mu\varphi^*\left(\frac{|\beta|}{\mu}\right)}.
\eeqs
In order to prove that (5) holds, we have to estimate
$\|x^\beta D^\alpha u\|_{L^\infty(\R^n)}$. Fix $\bar{s}=\left[\frac{n}{2}\right]+1$;
from \eqref{LinftyL2} and \eqref{5-modif} we have
\beqs
\|x^\beta D^\alpha u\|_{L^\infty(\R^n)}\leq&&C \sum_{|\gamma|\leq \bar{s}}
\|D^\gamma (x^\beta D^\alpha u)\|_{L^2(\R^n)} \nonumber \\
\leq &&C\sum_{|\gamma|\leq \bar{s}}\sum_{\afrac{\sigma\leq\gamma}{\sigma\leq\beta}}
\binom{\gamma}{\sigma} \binom{\beta}{\sigma} \sigma! \| x^{\beta-\sigma}
D^{\alpha+\gamma-\sigma}u\|_{L^2(\R^n)} \label{5-Linfty} \\
\leq &&C\sum_{|\gamma|\leq \bar{s}}\sum_{\afrac{\sigma\leq\gamma}{\sigma\leq\beta}}
\binom{\gamma}{\sigma} \binom{\beta}{\sigma} \sigma! \tilde{C}_{\lambda,\mu}
e^{\lambda\varphi^*\left(\frac{|\alpha+\gamma-\sigma|}{\lambda}\right)}
e^{\mu\varphi^*\left(\frac{|\beta-\sigma|}{\mu}\right)}. \nonumber
\eeqs
Now, proceeding as in previous steps, using inequality (\ref{fact}),
the convexity of $\varphi^{*}$ and similar properties as before we
easily get (5).

$(5)\Leftrightarrow(6)$ is trivial from the convexity of $\varphi^{*}$.
\end{proof}

%%%%%%%%%%%%%%%%

We recall quickly the definition of
the space $\E_{(\omega)}(\Omega)$ of
$\omega$-ultradifferentiable functions of Beurling type in an
open subset $\Omega$ of $\R^n$. It is the set
\beqsn
\E_{(\omega)}(\Omega):=\Big\{f\in C^\infty(\Omega):\ &&\forall K\subset\subset
\Omega,\ \forall m\in\N\\
&&\sup_{\alpha\in\N^n}\sup_{x\in K}|D^\alpha f(x)|
e^{-m\varphi^*\left(\frac{|\alpha|}{m}\right)}<+\infty\Big\}.
\eeqsn
To define then the space of
$\omega$-ultradifferentiable functions of Beurling type with compact support,
we first consider, for a compact set $K\subset\Omega$,
\begin{equation}\label{domegaK}
\D_{(\omega)}(K):=\{f\in\E_{(\omega)}(\Omega):\ \supp f\subseteq K\}.
\end{equation}
This space is not trivial because of $(\beta)$ of Definition~\ref{def2} (considering the non-quasianalytic case; for quasianalytic weights the space \eqref{domegaK} contains only the function identically $0$). Finally, we set the space of test functions as follows
\beqsn
\D_{(\omega)}(\Omega) = \indlim_{K\nearrow\Omega} \D_{(\omega)}(K).
\eeqsn

The spaces of
Roumieu type are not used here and a definition can be found in \cite{BMT} with a stronger condition instead of our $(\gamma)$. The use of $(\gamma)$ is clarified for the Beurling case in \cite{BG} (see also \cite{F}).

We recall here some properties of the space $\Sch_\omega(\R^n)$, that we shall
use in the following. For the proofs we refer to \cite[Kap. I, \S6]{F} (see
also \cite{B})
.
\begin{Prop}
\label{PropertiesSomega}
Let $\omega$ be as in Definition \ref{def2}.
\begin{enumerate}[\indent \rm (a)]
\item
  The Fourier transform is a continuous automorphism
$\F :\Sch_\omega(\R^n)\to  \Sch_\omega(\R^n)$.
It can be extended to $\Sch_\omega^\prime(\R^n)$ in the
  standard way, by the formula
\beqsn
\langle\widehat{u},\varphi\rangle=\langle u,\widehat{\varphi}\rangle
\qquad\forall\varphi\in\Sch_\omega.
\eeqsn
\item
$\Sch_\omega(\R^n)$ is an algebra under multiplication and convolution.
\item
The differentiation $D^\alpha$, the multiplication by $x^\alpha$,
for $\alpha\in\N^n_0$, the multiplication by $e^{i\langle \cdot,a\rangle}$
and the translation $\tau_a$ acting as $\tau_a u(x) := u(x-a)$, for
$a\in\R^n$, are continuous on $\Sch_\omega(\R^n)$.
\item
The following inclusions hold: $\D_{(\omega)}(\R^n)\subset\Sch_\omega(\R^n)
\subset\E_{(\omega)}(\R^n)$.
\item
$\D_{(\omega)}(\R^n)$ is dense in $\Sch_\omega(\R^n)$.
\item
For $\psi\in\Sch_\omega(\R^n)$ and $u\in\Sch^\prime_\omega(\R^n)$ we have
$\psi*u\in\Sch^\prime_\omega(\R^n)$ and
$\widehat{\psi*u}=\widehat\psi\cdot\widehat u$.
\end{enumerate}
\end{Prop}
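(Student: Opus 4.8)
The statement collects the standard functional-analytic properties of $\Sch_\omega(\R^n)$ and $\Sch_\omega'(\R^n)$, so the plan is to reduce each item either to the characterization in Theorem~\ref{newprop49} or to the quoted references \cite{F,B}; I would, however, indicate the self-contained arguments. For (a), I would first show $\F:\Sch_\omega\to\Sch_\omega$ is a well-defined linear map: if $u\in\Sch_\omega$ then by definition $\widehat u\in C^\infty$ and $\widehat{\widehat u}(\xi)=(2\pi)^n u(-\xi)$, so conditions (i) and (ii) of Definition~\ref{def3} for $\widehat u$ are exactly conditions (ii) and (i) for $u$; hence $\widehat u\in\Sch_\omega$ and $\F$ is a bijection with $\F^{-1}$ given by the inverse Fourier transform. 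Continuity follows because the seminorms in (i), (ii) are simply interchanged by $\F$; then one extends $\F$ to $\Sch_\omega'$ by transposition, the formula $\langle\widehat u,\varphi\rangle=\langle u,\widehat\varphi\rangle$ being well-posed since $\widehat\varphi\in\Sch_\omega$ by what was just proved.

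For (c), I would use the equivalence $(1)\Leftrightarrow(2)$ (and $(4)$, $(5)$) of Theorem~\ref{newprop49}: multiplication by $x^\alpha$ preserves condition $(i)'$ trivially (it only shifts the polynomial weight, absorbed via condition $(\gamma)$ as in the proof of $(3)\Rightarrow(2)$) and preserves $(ii)'$ because on the Fourier side it becomes $D^\alpha$, which is condition $(ii)$; symmetrically $D^\alpha$ is handled. Multiplication by $e^{i\langle\cdot,a\rangle}$ leaves $|D^\alpha(\cdot)|$ controlled by a finite sum of lower-order terms (Leibniz) with constants depending on $a$, and on the Fourier side it is the translation $\tau_a$, whose effect on $e^{\lambda\omega(\xi)}$ is controlled by the quasi-subadditivity $(\alpha)$: $\omega(\xi)\le\omega(\xi-a)+\omega(a)+C$ up to the constant $L$. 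So all four operations are continuous. For (b), the algebra property under multiplication follows from (c) applied iteratively together with the characterization $(3)$: the product of two rapidly $\omega$-decreasing smooth functions with rapidly $\omega$-decreasing Fourier transforms again has this property, since $\widehat{uv}=(2\pi)^{-n}\widehat u*\widehat v$ and the convolution of two functions each bounded by $C_\lambda e^{-\lambda\omega}$ is bounded by $C_\lambda' e^{-(\lambda-\epsilon)\omega}$ using $(\alpha)$; convolution is then the Fourier-dual statement.

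For (d), the inclusion $\D_{(\omega)}(\R^n)\subset\Sch_\omega(\R^n)$ is immediate since a compactly supported function in $\E_{(\omega)}$ satisfies (i) trivially (it vanishes outside a compact set, so $e^{\lambda\omega(x)}|D^\alpha u(x)|$ is bounded), and its Fourier transform is entire of the appropriate Paley--Wiener--type growth, giving (ii); the inclusion $\Sch_\omega(\R^n)\subset\E_{(\omega)}(\R^n)$ is immediate from condition $(a)$ of Theorem~\ref{newprop49}(4) with $\beta=0$, restricted to a compact set. Item (e) is density of test functions, which I would obtain by the usual truncation-and-mollification argument: given $u\in\Sch_\omega$, multiply by a cutoff $\chi(x/j)$ with $\chi\in\D_{(\omega)}$ equal to $1$ near $0$ — here one needs $(\beta)$ so that such $\chi$ exists — and then convolve with an approximate identity in $\D_{(\omega)}$; one checks convergence in all the seminorms using (b), (c), and the decay estimates. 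Finally (f) follows from (e) and (b) by duality: for $\psi\in\Sch_\omega$ and $u\in\Sch_\omega'$ one defines $\psi*u$ by $\langle\psi*u,\varphi\rangle=\langle u,\check\psi*\varphi\rangle$, well-defined since $\check\psi*\varphi\in\Sch_\omega$ by (b), and the identity $\widehat{\psi*u}=\widehat\psi\cdot\widehat u$ is obtained by testing against $\varphi\in\Sch_\omega$ and using part (a) together with the scalar convolution theorem.

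The only genuinely delicate point is the density statement (e) and the construction of the cutoff functions it requires, which is precisely where non-quasianalyticity $(\beta)$ enters; everything else is bookkeeping with the weight conditions $(\alpha)$ and $(\gamma)$ and the characterization in Theorem~\ref{newprop49}. Since all of these facts are established in \cite[Kap.~I, \S6]{F} (and \cite{B}) for weights satisfying the stated conditions — recalling from Remark~\ref{comparison-weights} that our hypotheses only relax $(\gamma)$, which plays no role in these arguments — I would state the proposition with a reference and include the short Fourier-transform argument for (a) explicitly, as it is the one most frequently invoked in the sequel.
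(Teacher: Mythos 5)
Your proposal is correct and ends up where the paper does: the paper gives no proof of this proposition at all, simply citing \cite[Kap.~I, \S 6]{F} and \cite{B}, which is exactly the course you propose in your final paragraph. The self-contained sketches you supply for (a)--(f) (symmetry of Definition~\ref{def3} under $\F$, the algebra property via $\widehat{uv}=(2\pi)^{-n}\widehat u*\widehat v$ and condition $(\alpha)$, Paley--Wiener for (d), cutoff-and-mollify for (e) using $(\beta)$) are the standard arguments and are sound, so they would only add detail that the paper chose to delegate to the references.
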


We observe that Theorem~\ref{newprop49} allows to define equivalent systems of seminorms for $\mathcal{S}_{\omega}$. For example, from condition (6) of this theorem it is clear that, given $u\in\mathcal{S}_{\omega}$ the family
$$
p_{\lambda}(u):=\sup_{\alpha,\beta\in\N_{0}^{n}}\sup_{x\in\R^n}|x^\beta D^\alpha u(x)|
e^{-\lambda\varphi^*\left(\frac{|\alpha+\beta|}{\lambda}\right)},
$$
for all $\lambda>0$, defines a fundamental system of seminorms for $\mathcal{S}_{\omega}.$ In a similar way, we can construct different equivalent systems of seminorms from the other conditions of the theorem.

\begin{Rem}
\label{reminvertibilita}
  \begin{em}
  By Proposition~\ref{PropertiesSomega}~(a),
    $\Sch_\omega(\R^{2n})$ is invariant by Fourier transform $\F=\F_{(x,y)}$.

    Moreover, it can be proved by direct calculation that $\Sch_\omega(\R^{2n})$
    is also invariant by partial Fourier transform $\F_x$. This can also be deduced
    from the facts that it is clear for $\varphi\in\Sch_\omega(\R^{2n})$ of
the form
    $\varphi(x,y)=f(x)\cdot g(y)$, with $f,g\in\Sch_\omega (\R^n)$, and $\Sch_\omega(\R^n)\otimes\Sch_\omega(\R^n)$ is dense in
  $\Sch_\omega(\R^{2n})$ by
    Proposition~\ref{PropertiesSomega}~(e) and \cite[Thm. 8.1]{BMT}
    (cf. also \cite{BG}, since we assume condition $(\gamma)$ of Definition~\ref{def2}
    instead of $\log(t)=o(\omega(t))$ as $t\to\infty$).

    Furthermore, the linear change of variable $T:\ \Sch_\omega\to\Sch_\omega$
defined in \eqref{wigner-comp-op} is invertible and therefore
from \eqref{wigner-fourier} we deduce that also the Wigner transform
\beqsn
\Wig:\ &&\Sch_\omega\longrightarrow\Sch_\omega\\
&&\Sch'_\omega\longrightarrow\Sch'_\omega
\eeqsn
is invertible.
      \end{em}
\end{Rem}
The following lemma can be deduced as Lemma~\ref{lemma4} above.

\begin{Lemma}
\label{cor3}
If $\varphi\in C^\infty_p(\R^n)$ and $u\in\Sch_\omega$
then $\varphi u\in\Sch_\omega$. If $w\in\Sch'_\omega$
then $\varphi w\in\Sch'_\omega$.
\end{Lemma}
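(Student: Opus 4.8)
The statement to prove is Lemma~\ref{cor3}: if $\varphi\in C^\infty_p(\R^n)$ and $u\in\Sch_\omega$, then $\varphi u\in\Sch_\omega$, and if $w\in\Sch'_\omega$, then $\varphi w\in\Sch'_\omega$. The plan is to mimic the argument for Lemma~\ref{lemma4} but supply the missing $\omega$-weighted bookkeeping. The natural tool is the characterization of $\Sch_\omega$ from Theorem~\ref{newprop49}: it suffices to check that $\varphi u$ satisfies, say, condition $(3)(i)''$--$(ii)''$, or more conveniently condition $(1)(i)$ and $(1)(ii)$, since all these are equivalent descriptions of membership in $\Sch_\omega$.

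For the first assertion, I would argue as follows. Since $u\in\Sch_\omega$, condition $(1)(i)$ gives $\sup_x e^{\lambda\omega(x)}|D^\alpha u(x)|<\infty$ for all $\lambda>0$, $\alpha$; and since $\varphi\in C^\infty_p$, there are $N$ and $c$ with $|D^\gamma\varphi(x)|\le c(1+|x|^2)^N$ for all $\gamma$. By the Leibniz rule, $D^\alpha(\varphi u)=\sum_{\gamma\le\alpha}\binom{\alpha}{\gamma}D^\gamma\varphi\, D^{\alpha-\gamma}u$, so
\[
e^{\lambda\omega(x)}|D^\alpha(\varphi u)(x)|\le c\,(1+|x|^2)^N\sum_{\gamma\le\alpha}\binom{\alpha}{\gamma}e^{\lambda\omega(x)}|D^{\alpha-\gamma}u(x)|.
\]
Here the polynomial factor $(1+|x|^2)^N$ is harmless: by condition $(\gamma)$ of Definition~\ref{def2}, $\log(1+|x|)\le(\omega(x)-a)/b$, hence $(1+|x|^2)^N\le e^{(2N/b)\omega(x)}e^{-2Na/b}$, so we can absorb it by replacing $\lambda$ with $\lambda+2N/b$ in the bound for $u$. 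This proves condition $(1)(i)$ for $\varphi u$. For condition $(1)(ii)$, i.e. the decay of $D^\alpha\widehat{\varphi u}$, the cleanest route is \emph{not} to work on the Fourier side directly (where $\varphi$ becomes an awkward convolution operator), but rather to observe that condition $(4)$ of Theorem~\ref{newprop49} shows membership in $\Sch_\omega$ is equivalently captured by the mixed estimates on $|x^\beta D^\alpha u(x)|$ alone (no Fourier transform appears in $(4)$, $(5)$, $(6)$). So instead I would verify condition $(6)$ for $\varphi u$: starting from $|x^\beta D^\alpha(\varphi u)(x)|\le\sum_{\gamma\le\alpha}\binom{\alpha}{\gamma}|x^\beta D^\gamma\varphi(x)\,D^{\alpha-\gamma}u(x)|\le c\sum_{\gamma\le\alpha}\binom{\alpha}{\gamma}|x^{\beta}|(1+|x|^2)^N|D^{\alpha-\gamma}u(x)|$, and then $|x^\beta|(1+|x|^2)^N\le C|x^{\beta'}|$ for suitable $\beta'$ with $|\beta'|\le|\beta|+2N$, reduce to the estimate $(6)$ for $u$ with $\beta$ replaced by $\beta'$; the shift of the index $\beta$ by a \emph{bounded} amount $2N$ is absorbed using the convexity and subadditivity-type properties of $\varphi^*$ recorded after Definition~\ref{def3} (indeed $\varphi^*((k+2N)/\lambda)\le$ const${}+\varphi^*(k/\lambda)$ type bounds, or simply a change of $\lambda$ via Lemma~\ref{estimate-weight}). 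The binomial coefficients $\binom{\alpha}{\gamma}\le 2^{|\alpha|}$ are again absorbed by Lemma~\ref{estimate-weight}, exactly as in the proof of Theorem~\ref{newprop49}.

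For the second assertion, duality does the work. For $w\in\Sch'_\omega$ and any test function $\psi\in\Sch_\omega$, define $\langle\varphi w,\psi\rangle:=\langle w,\varphi\psi\rangle$; by the first part (applied to $\psi$ in place of $u$) we have $\varphi\psi\in\Sch_\omega$, and by Proposition~\ref{PropertiesSomega}~(c) together with the seminorm estimates just established the map $\psi\mapsto\varphi\psi$ is continuous on $\Sch_\omega$, so $\varphi w$ is a well-defined element of $\Sch'_\omega$. (Continuity of $\psi\mapsto\varphi\psi$ is immediate once we know the seminorm $p_\lambda(\varphi\psi)$ is controlled by some $p_{\lambda'}(\psi)$, which the computation above provides with explicit constants depending only on $\varphi$, $N$, $\lambda$.)

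\textbf{Main obstacle.} None of the steps is deep; the only point requiring a little care is the index shift $\beta\mapsto\beta'$ with $|\beta'|-|\beta|\le 2N$ in the weighted seminorm, i.e. checking that $e^{\lambda\varphi^*(|\alpha+\beta'|/\lambda)}$ is controlled by a constant (depending on $N,\lambda$) times $e^{\lambda'\varphi^*(|\alpha+\beta|/\lambda')}$ for suitable $\lambda'$. This follows from the standard properties of $\varphi^*$ used repeatedly in the proof of Theorem~\ref{newprop49} (monotonicity of $\varphi^*(s)/s$ and Lemma~\ref{estimate-weight}), so in fact the lemma is ``obvious'' in the same sense that Lemma~\ref{lemma4} is, as the sentence preceding the statement already asserts.
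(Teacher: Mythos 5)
Your argument is correct, and it is worth noting that the paper itself offers no proof of this lemma at all: it merely says it ``can be deduced as Lemma~\ref{lemma4} above'', and Lemma~\ref{lemma4} was in turn declared obvious via the inclusion $C^\infty_p\subset\mathcal{O}_M$, an argument that does not transfer verbatim to $\Sch_\omega$ because the paper never identifies the multiplier space of $\Sch_\omega$. So what you have written genuinely supplies the missing content. Your key move --- verifying membership of $\varphi u$ in $\Sch_\omega$ through the Fourier-transform-free conditions $(4)$/$(6)$ of Theorem~\ref{newprop49} rather than through condition $(1)(ii)$ --- is exactly the right way to sidestep the fact that multiplication by $\varphi$ becomes an unpleasant convolution on the Fourier side; note that this requires first knowing $\varphi u\in\Sch(\R^n)$ (Lemma~\ref{lemma4}) so that the theorem applies, which you use implicitly. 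Two small points of hygiene: the inequality $|x^\beta|(1+|x|^2)^N\leq C|x^{\beta'}|$ is not literally true for a single $\beta'$; you should either expand $(1+|x|^2)^N$ into the finite sum of monomials $x^{2\delta}$, $|\delta|\leq N$, and apply condition $(6)$ for $u$ to each term $x^{\beta+2\delta}D^{\alpha-\gamma}u$, or replace $|x^{\beta'}|$ by $\langle x\rangle^{|\beta|+2N}$ throughout; and the index shift $|\alpha+\beta|\mapsto|\alpha+\beta|+2N$ is absorbed by the convexity bound $\varphi^*\bigl(\frac{j+2N}{\mu}\bigr)\leq\frac12\varphi^*\bigl(\frac{2j}{\mu}\bigr)+\frac12\varphi^*\bigl(\frac{4N}{\mu}\bigr)$ with $\mu=2\lambda$, exactly as in the proof of $(4)\Rightarrow(5)$ in Theorem~\ref{newprop49}, so the ``for all $\lambda$'' quantifier saves you as you claim. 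The duality step for $w\in\Sch'_\omega$ is fine once the continuity of $\psi\mapsto\varphi\psi$ is extracted from the explicit seminorm constants (the citation of Proposition~\ref{PropertiesSomega}~(c) is not really needed there, since that item covers only monomial and exponential multipliers).
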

%
%\begin{proof}
%We use Theorem~\ref{newprop49}. Assuming that $u$ satisfies
%conditions $(a)$ and $(b)$ of $(4)$ we prove that also $\varphi u$ satisfies
%the same two conditions, arguing as in the proof of Lemma~\ref{lemma4}.
%The statement for $w\in\Sch'$ follows from the one with $u\in\Sch$, as in the
%proof of Lemma~\ref{lemma4}.
%\end{proof}

\begin{Prop}
\label{prop412}
For every non-quasianalytic weight function
$\omega$ we have  $$\Sch^\prime(\R^n)\subset\Sch_\omega^\prime(\R^n).$$
\end{Prop}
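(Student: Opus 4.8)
The plan is to show that the inclusion $\Sch_\omega(\R^n)\subset\Sch(\R^n)$ is continuous and has dense range, and then dualize. First I would recall from Remark~\ref{S-omega-S} that every $u\in\Sch_\omega(\R^n)$ lies in $\Sch(\R^n)$, so we at least have the set-theoretic inclusion $\Sch_\omega(\R^n)\hookrightarrow\Sch(\R^n)$. To get continuity of this inclusion, it suffices to estimate each Schwartz seminorm $\sup_x|x^\beta D^\alpha u(x)|$ by one of the seminorms of $\Sch_\omega$; but condition $(\gamma)$ of Definition~\ref{def2} gives $\log(1+|x|)\le (\omega(x)-a)/b$, hence $|x^\beta|\le C_\beta e^{\lambda\omega(x)}$ for a suitable $\lambda$ depending on $|\beta|$, and then $\sup_x|x^\beta D^\alpha u(x)|\le C_\beta\sup_x e^{\lambda\omega(x)}|D^\alpha u(x)|$, which is finite by condition $(i)$ of Definition~\ref{def3}. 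Thus the inclusion map is continuous between the Fréchet spaces $\Sch_\omega(\R^n)$ and $\Sch(\R^n)$.

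Next I would establish that $\Sch_\omega(\R^n)$ is dense in $\Sch(\R^n)$. The cleanest route is through the test function space: by Proposition~\ref{PropertiesSomega}~(d) we have $\D_{(\omega)}(\R^n)\subset\Sch_\omega(\R^n)$, and $\D_{(\omega)}(\R^n)$ contains $\D(\R^n)$ — indeed it contains compactly supported functions, and by the non-quasianalyticity condition $(\beta)$ these form a nontrivial space; more to the point $C_c^\infty$-functions built from $\omega$-ultradifferentiable bumps are dense in $\Sch(\R^n)$ since already $\D(\R^n)$ is dense in $\Sch(\R^n)$ in the Schwartz topology, and $\D_{(\omega)}(\R^n)$ is dense in $\D(\R^n)$ for the (weaker) $\Sch$-topology by a standard regularization argument (convolving a $\D$-function with an $\omega$-ultradifferentiable mollifier stays in $\D_{(\omega)}$ and converges in $\Sch$). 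Hence $\Sch_\omega(\R^n)\supseteq\D_{(\omega)}(\R^n)$ is dense in $\Sch(\R^n)$.

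Having a continuous inclusion $\iota:\Sch_\omega(\R^n)\to\Sch(\R^n)$ with dense image, the transpose $\iota^t:\Sch'(\R^n)\to\Sch'_\omega(\R^n)$, defined by $\langle\iota^t u,\varphi\rangle=\langle u,\iota\varphi\rangle$ for $u\in\Sch'$ and $\varphi\in\Sch_\omega$, is well defined (continuity of $\iota$ guarantees $\iota^t u$ is a continuous functional on $\Sch_\omega$) and injective precisely because $\iota$ has dense range: if $\langle u,\varphi\rangle=0$ for all $\varphi\in\Sch_\omega$, then $u$ vanishes on a dense subspace of $\Sch$ and hence $u=0$ in $\Sch'$. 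Identifying $u\in\Sch'(\R^n)$ with $\iota^t u\in\Sch'_\omega(\R^n)$, we obtain $\Sch'(\R^n)\subset\Sch'_\omega(\R^n)$, which is the assertion.

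The only step requiring genuine care is the density of $\Sch_\omega(\R^n)$ (equivalently of $\D_{(\omega)}(\R^n)$) in $\Sch(\R^n)$: one must check that the non-quasianalyticity condition $(\beta)$ really does furnish enough $\omega$-ultradifferentiable functions, and that the mollification argument converges in every Schwartz seminorm, not merely pointwise. The continuity of the inclusion and the dualization are routine once $(\gamma)$ is invoked.
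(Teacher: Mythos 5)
Your argument is correct and follows essentially the same route as the paper: show that $\Sch_\omega(\R^n)$ is continuously and densely embedded in $\Sch(\R^n)$ via the chain $\D_{(\omega)}(\R^n)\hookrightarrow\Sch_\omega(\R^n)\hookrightarrow\Sch(\R^n)$ (the paper simply cites \cite{BMT} for the density of $\D_{(\omega)}(\R^n)$ in $\D(\R^n)$, where you sketch a mollification argument), and then dualize. One slip to correct: $\D_{(\omega)}(\R^n)$ does \emph{not} contain $\D(\R^n)$ --- the inclusion goes the other way --- but your proof never actually uses that claim, only the density of $\D_{(\omega)}(\R^n)$ in $\D(\R^n)$, which is what is needed.
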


\begin{proof}
  We already know that $\Sch_\omega(\R^n)\subset\Sch(\R^n)$,
  cf. Remark \ref{S-omega-S}. It is enough to prove that $\Sch_\omega(\R^n)$
  is dense in $\Sch(\R^n)$.

By \cite[Prop. 3.9]{BMT} we have that $\D_{(\omega)}(\R^n)$
is dense in $\D(\R^n)$.
On the other hand, it is known that $\D(\R^n)$ is dense in the Schwartz
class $\Sch(\R^n)$.
Then $\D_{(\omega)}(\R^n)$ is also dense in $\Sch(\R^n)$.
From the inclusions
\beqsn
\D_{(\omega)}(\R^n)\hookrightarrow
\Sch_\omega(\R^n)\hookrightarrow\Sch(\R^n),
\eeqsn
we can conclude that $\Sch_\omega$ is dense in $\Sch$.
\end{proof}

We give now the definition of regularity in the $\Sch_\omega$ frame and we
extend to $\Sch_\omega$ the results of Sections \ref{sec2} and \ref{sec3}.

\begin{Def}
\label{defomegaregular}
A linear operator $A$ on $\Sch'_\omega(\R^n)$ is {\em $\omega$-regular} if
\beqsn
Au\in\Sch_\omega(\R^n)\quad \Rightarrow\quad
u\in\Sch_\omega(\R^n),\qquad\forall u\in\Sch'_\omega(\R^n).
\eeqsn

\end{Def}

\begin{Prop}
\label{prop2}
Let $\sigma=q(D_1,D_2)\F^{-1}(e^{-iP(\xi,\eta)})$ for some
$P(\xi,\eta)\in\R[\xi,\eta]$  and $q(\xi,\eta)\in\C[\xi,\eta]$ with
$q(\xi,\eta)\neq0$ for all $\xi,\eta\in\R$. Let $u\in\Sch'_\omega$ for a
non-quasianalytic weight function $\omega$. Then $Q[u]=\sigma*\Wig[u]$ is
well defined and satisfies:
\begin{itemize}
\item[(i)]
$Q:\ \Sch'_\omega\to\Sch'_\omega$ is invertible;
\item[(ii)]
$Q$ is $\omega$-regular;
\item[(iii)]
$Q:\ \Sch_\omega\to\Sch_\omega$.
\end{itemize}
\end{Prop}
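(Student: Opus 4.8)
The plan is to reproduce, in the ultradifferentiable framework, the argument already used for Lemma~\ref{lemma2} and Proposition~\ref{prop3}, systematically replacing each classical tool by its $\Sch_\omega$-counterpart: the multiplier Lemma~\ref{cor3} in place of Lemma~\ref{lemma4}, the invertibility of $\Wig$ on $\Sch_\omega$ and on $\Sch'_\omega$ from Remark~\ref{reminvertibilita} in place of the classical invertibility, and Proposition~\ref{PropertiesSomega}~(a) for the Fourier transform on $\Sch_\omega$ and $\Sch'_\omega$. The starting observation is that $\widehat\sigma(\xi,\eta)=q(\xi,\eta)e^{-iP(\xi,\eta)}\in C^\infty_p(\R^2)$, since $q$ is a polynomial and, $P$ being real valued, all derivatives of $e^{-iP}$ have polynomial growth; hence, for $u\in\Sch'_\omega$, we have $\widehat{\Wig[u]}\in\Sch'_\omega$ by Remark~\ref{reminvertibilita} and Proposition~\ref{PropertiesSomega}~(a), and the product $\widehat\sigma\cdot\widehat{\Wig[u]}$ lies in $\Sch'_\omega$ by Lemma~\ref{cor3}, so that $Q[u]$ is well defined as $\F^{-1}(\widehat\sigma\cdot\widehat{\Wig[u]})\in\Sch'_\omega$ (the convolution being understood, as in Lemma~\ref{lemma2}, via the Fourier transform).

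For part (i), injectivity is immediate, since $\widehat\sigma$ is an invertible multiplier and $\F$, $\Wig$ are injective on $\Sch'_\omega$. For surjectivity, given $w\in\Sch'_\omega$ I would set $g:=\widehat w/\widehat\sigma=e^{iP}\widehat w/q$, which belongs to $\Sch'_\omega$ because $1/\widehat\sigma=e^{iP}/q\in C^\infty_p(\R^2)$ and Lemma~\ref{cor3} applies; then $\F^{-1}(g)\in\Sch'_\omega$, and by the invertibility of $\Wig$ there is $u\in\Sch'_\omega$ with $\Wig[u]=\F^{-1}(g)$, whence $\widehat{Q[u]}=\widehat\sigma\,\widehat{\Wig[u]}=\widehat\sigma\,g=\widehat w$ and $Q[u]=w$.

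For part (ii), assume $Q[u]\in\Sch_\omega$ for some $u\in\Sch'_\omega$. From $\widehat{Q[u]}=\widehat\sigma\,\widehat{\Wig[u]}\in\Sch_\omega$ and $1/\widehat\sigma\in C^\infty_p(\R^2)$, Lemma~\ref{cor3} yields $\widehat{\Wig[u]}=(1/\widehat\sigma)\,\widehat{Q[u]}\in\Sch_\omega$; hence $\Wig[u]\in\Sch_\omega$ by Proposition~\ref{PropertiesSomega}~(a), and finally $u\in\Sch_\omega$ by Remark~\ref{reminvertibilita}. For part (iii), if $u\in\Sch_\omega$ then $\Wig[u]\in\Sch_\omega$ and $\widehat{\Wig[u]}\in\Sch_\omega$ by Remark~\ref{reminvertibilita} and Proposition~\ref{PropertiesSomega}~(a), so $\widehat\sigma\,\widehat{\Wig[u]}\in\Sch_\omega$ by Lemma~\ref{cor3}, and therefore $Q[u]=\F^{-1}(\widehat\sigma\,\widehat{\Wig[u]})\in\Sch_\omega$.

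The only step that is not a purely formal transcription of the $\Sch$-case is the assertion $1/\widehat\sigma=e^{iP}/q\in C^\infty_p(\R^2)$, that is, that $1/q$ and all its derivatives have polynomial growth. This rests on the fact that a polynomial $q$ with no real zeros satisfies a lower bound $|q(\xi,\eta)|\ge c(1+|\xi|^2+|\eta|^2)^{-N}$ for suitable $c>0$ and $N\in\N$, from which the estimates for the derivatives of $1/q$ follow via the Leibniz rule; this is precisely the fact already used implicitly in the proof of Proposition~\ref{prop3}. Once it is available, (i)--(iii) follow exactly as in the proof of Lemma~\ref{lemma2}, with the $\Sch_\omega$-tools listed above in place of the classical ones.
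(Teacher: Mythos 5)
Your proof is correct and follows exactly the route the paper takes: the paper's own proof simply states that the argument of Lemma~\ref{lemma2} and Proposition~\ref{prop3} carries over verbatim, using Remark~\ref{reminvertibilita}, Proposition~\ref{PropertiesSomega}~(a) and Lemma~\ref{cor3} in place of their classical counterparts, and noting that $\widehat\sigma,1/\widehat\sigma\in C^\infty_p$ since $|\widehat\sigma|=|q|\neq 0$. Your explicit justification that $1/q\in C^\infty_p$ via the lower bound $|q(\xi,\eta)|\geq c(1+|\xi|^2+|\eta|^2)^{-N}$ for a nonvanishing polynomial is the one detail the paper leaves implicit, and it is the correct one.
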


\begin{proof}
The proof is analogous to that of Lemma~\ref{lemma2}
(or Proposition~\ref{prop3}), because of the invertibility of the Fourier
transform and of the Wigner transform on $\Sch_\omega$ and $\Sch'_\omega$
(cf. Remark~\ref{reminvertibilita}), and by
means of Lemma~\ref{cor3}, since $\widehat{\sigma},
1/\widehat{\sigma}\in C^\infty_p$ and
$|\widehat{\sigma}(\xi,\eta)|=|q(\xi,\eta)|\neq0$ for
all $\xi,\eta\in\R$.
\end{proof}

\begin{Th}
\label{th5}
Let $\omega$ be a non-quasianalytic weight function,
$P(\xi,\eta)\in\R[\xi,\eta]$ and $q(\xi,\eta)\in\C[\xi,\eta]$ with
$q(\xi,\eta)\neq0$ for all $\xi,\eta\in\R$.
Let $\sigma=\F^{-1}(e^{-iP(\xi,\eta)})\in\Sch'\subset\Sch'_\omega$,
$\sigma_1=q(D_1,D_2)\sigma$,
$Q^{(\sigma)}[w]=\sigma*\Wig[w]$ and $Q^{(\sigma_1)}[w]=\sigma_1*\Wig[w]$ for
$w\in\Sch'_\omega$.
Then, if $B(x,y,D_x,D_y)$ is a linear partial differential operator
with polynomial coefficients, we have that
\beqs
\label{48}
Q^{(\sigma_1)}[Bw]=\widetilde{AB}Q^{(\sigma)}[w],
\eeqs
where $A$ is the operator defined by $A(M_1,M_2,D_1,D_1)=q(D_1+D_2,M_2-M_1)$
and $\widetilde{AB}$ is obtained from $AB$ as in \eqref{31}.
Moreover, $B$ is $\omega$-regular if and only if
$\widetilde{AB}$ is $\omega$-regular.
\end{Th}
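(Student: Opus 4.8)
The plan is to follow closely the proof of Theorem~\ref{cor2}, replacing everywhere the role of $\Sch$ and $\Sch'$ by $\Sch_\omega$ and $\Sch'_\omega$, and invoking the $\omega$-versions of the auxiliary results that are now available. First I would establish the identity \eqref{48}: since $\sigma=\F^{-1}(e^{-iP})\in\Sch'\subset\Sch'_\omega$ by Proposition~\ref{prop412}, and $\sigma_1=q(D_1,D_2)\sigma$, the computation \eqref{36} that gave $Q^{(\sigma_1)}[w]=Q^{(\sigma)}[Aw]$ with $A(M_1,M_2,D_1,D_2)=q(D_1+D_2,M_2-M_1)$ is purely formal (it only uses Lemma~\ref{lemma1} and the commutation of convolution with differential operators), hence remains valid for $w\in\Sch'_\omega$; combining it with Theorem~\ref{th2} applied to the operator $AB$ yields $Q^{(\sigma_1)}[Bw]=Q^{(\sigma)}[ABw]=\widetilde{AB}\,Q^{(\sigma)}[w]$. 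Here I must be slightly careful that Theorem~\ref{th2} was stated for $w\in\Sch(\R^2)$; its proof, however, rests only on the algebraic relations \eqref{a}--\eqref{d} and the fact that mixed derivatives commute, which hold for $w\in\Sch'_\omega$ as well since elements of $\Sch'_\omega$ are in particular distributions and $Q^{(\sigma)}[w]$ is a well-defined $\omega$-temperate distribution by Proposition~\ref{prop2}.

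Next I would prove the equivalence ``$B$ is $\omega$-regular $\iff$ $\widetilde{AB}$ is $\omega$-regular''. For the forward direction, assume $B$ is $\omega$-regular and let $\widetilde{AB}u\in\Sch_\omega$ for some $u\in\Sch'_\omega$. By the surjectivity of $Q^{(\sigma)}:\Sch'_\omega\to\Sch'_\omega$ (Proposition~\ref{prop2}~(i)) there is $w\in\Sch'_\omega$ with $u=Q^{(\sigma)}[w]$; then \eqref{48} gives $Q^{(\sigma_1)}[Bw]=\widetilde{AB}u\in\Sch_\omega$. Since $Q^{(\sigma_1)}$ is $\omega$-regular by Proposition~\ref{prop2}~(ii) (applied with kernel $\sigma_1$), we get $Bw\in\Sch_\omega$, hence $w\in\Sch_\omega$ by $\omega$-regularity of $B$, and finally $u=Q^{(\sigma)}[w]\in\Sch_\omega$ by Proposition~\ref{prop2}~(iii). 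For the converse, assume $\widetilde{AB}$ is $\omega$-regular and let $Bw\in\Sch_\omega$ with $w\in\Sch'_\omega$. Since $Q^{(\sigma_1)}:\Sch_\omega\to\Sch_\omega$ by Proposition~\ref{prop2}~(iii), \eqref{48} gives $\widetilde{AB}Q^{(\sigma)}[w]=Q^{(\sigma_1)}[Bw]\in\Sch_\omega$; by $\omega$-regularity of $\widetilde{AB}$, $Q^{(\sigma)}[w]\in\Sch_\omega$, and since $Q^{(\sigma)}$ is $\omega$-regular (Proposition~\ref{prop2}~(ii)), $w\in\Sch_\omega$.

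The only genuine obstacle is making sure that every ingredient used for $\Sch$ has a validated $\Sch_\omega$ counterpart; but this has been arranged in advance: the invertibility and regularity of the Cohen-class maps are Proposition~\ref{prop2}, the inclusion $\Sch'\subset\Sch'_\omega$ (so that $\sigma,\sigma_1$ make sense as $\omega$-temperate distributions, with $\widehat\sigma,1/\widehat\sigma,\widehat{\sigma_1},1/\widehat{\sigma_1}\in C^\infty_p$) is Proposition~\ref{prop412}, the invariance of $\Wig$ and $\F$ on $\Sch_\omega$, $\Sch'_\omega$ is Remark~\ref{reminvertibilita}, and the multiplier property $C^\infty_p\cdot\Sch_\omega\subset\Sch_\omega$, $C^\infty_p\cdot\Sch'_\omega\subset\Sch'_\omega$ is Lemma~\ref{cor3}. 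Thus the proof is essentially a verbatim transcription of the proofs of Theorem~\ref{th2} and Theorem~\ref{cor2} into the $\omega$-setting, and I would phrase it briefly as such, pointing to those proofs and to Proposition~\ref{prop2} rather than reproducing the computations.
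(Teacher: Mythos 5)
Your proposal is correct and follows essentially the same route as the paper: formula \eqref{48} is obtained from the identity $Q^{(\sigma_1)}[w]=Q^{(\sigma)}[Aw]$ together with Theorem~\ref{th2} (the paper simply cites Theorem~\ref{cor2} for this, while you add the—worthwhile—observation that the purely algebraic derivation extends from $\Sch'$ to the larger space $\Sch'_\omega$), and both directions of the regularity equivalence are proved exactly as you do, via parts (i), (ii), (iii) of Proposition~\ref{prop2} applied to $Q^{(\sigma)}$ and $Q^{(\sigma_1)}$.
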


\begin{proof}
Formula \eqref{48} has already been proved in Theorem~\ref{cor2}.

Let $B$ be $\omega$-regular and prove that $\widetilde{AB}$ is
$\omega$-regular. So take $u\in\Sch'_\omega$ and assume that
$\widetilde{AB}u\in\Sch_\omega$. By Proposition~\ref{prop2}~$(i)$ (with
$q(\xi,\eta)\equiv1$) there
exists $w\in\Sch'_\omega$ such that $u=Q^{(\sigma)}[w]$.
By \eqref{48} we have that
$Q^{(\sigma_1)}[Bw]=\widetilde{AB}Q^{(\sigma)}[w]
=\widetilde{AB}u\in\Sch_\omega$ and hence
$Bw\in\Sch_\omega$ by Proposition~\ref{prop2}~$(ii)$.
Since $B$ is $\omega$-regular by assumption, $w\in\Sch_\omega$.
Finally, from Proposition~\ref{prop2}~$(iii)$, we have that
$u=Q^{(\sigma)}[w]\in\Sch_\omega$ and we have
proved that $\widetilde{AB}$ is $\omega$-regular.

Reciprocally, assuming that $\widetilde{AB}$ is $\omega$-regular, if
$Bu\in\Sch_\omega$ for some $u\in\Sch'_\omega$, then
$Q^{(\sigma_1)}[Bu]\in\Sch_\omega$
by Proposition~\ref{prop2}~$(iii)$ and therefore
$\widetilde{AB}Q^{(\sigma)}[u]=Q^{(\sigma_1)}[Bu]\in\Sch_\omega$.
By the $\omega$-regularity of $\widetilde{AB}$ we have that
$Q^{(\sigma)}[u]\in\Sch_\omega$ and hence $u\in\Sch_\omega$ by
Proposition~\ref{prop2}~$(ii)$. This proves that $B$ is
$\omega$-regular.
\end{proof}

\begin{Rem}
\label{rem417}
\begin{em}
  Theorem \ref{th5} is an extension to $\mathcal{S}_\omega$ of Theorem \ref{cor2}. Observe in particular that for $q\equiv1$, and hence $A\equiv I$, Theorem~\ref{th5} implies that $B$
  is $\omega$-regular if and only if $\tilde{B}$ is $\omega$-regular, extending
  therefore to $\Sch_\omega$, for every weight function $\omega$, the
  results obtained for $\Sch$ in the previous sections.
  \end{em}
  \end{Rem}

\begin{Rem}
\begin{em}
All the results of the present section may be proved also in the quasianalytic case,
and more precisely when the weight function $\omega$ satisfies
$\omega(t)=o(t)$ as $t\to+\infty$,
instead of $(\beta)$.
In this case $\Sch_\omega$ does not contain functions with compact support,
so that conditions $(d)$ and $(e)$ of Proposition~\ref{PropertiesSomega} will drop.
However, Proposition~\ref{prop412} is still valid, since the density of $\Sch_\omega(\R^n)$
in $\Sch(\R^n)$ can be proved by \cite[Lemma 3.2]{L}, which shows that the
Hermite functions, that are a Shauder basis in $\Sch(\R^n)$, are in
$\Sch_\omega(\R^n)$ because of Theorem~\ref{newprop49}(6) and the following
property:
\beqsn
\forall B>0,\,\lambda>0\ \exists C_{B,\lambda}>0\ \mbox{s.t.}\qquad
B^nn!\leq C_{B,\lambda}e^{\lambda\varphi^*\left(\frac{n}{\lambda}\right)},
\quad\forall n\in\N,
\eeqsn
which follows from \eqref{fact} and Lemma~\ref{estimate-weight}.
\end{em}
\end{Rem}

\section{Examples}

In this section we give some examples of applications of our results in
order to find classes of regular partial differential operators with
polynomial coefficients. Recall from \cite{S} that a polynomial $a(x,\xi)$
of order $m$, with $x,\xi\in\R^n$, is said to be hypoelliptic if there exists
$m^\prime\leq m$, $\rho\in (0,1]$, and positive constants $c,C$ such that for
every $\alpha,\beta\in\N_0$,
\begin{equation}
\begin{split}
\label{g-hypoelliptic}
\vert a(x,\xi)\vert &\geq c\langle (x,\xi)\rangle^{m^\prime} \\
\vert\partial^\alpha_x \partial^\beta_\xi a(x,\xi)\vert &\leq C\vert
a(x,\xi)\vert \,\langle(x,\xi)\rangle^{-\rho(\vert\alpha\vert+\vert\beta\vert)}
\end{split}
\end{equation}
for $| (x,\xi)|\geq B$, where $\langle (x,\xi)\rangle
:=(1+\vert x\vert^2+\vert\xi\vert^2)^{1/2}$.

\begin{Rem}
\begin{em}
From the results of \cite{S}
we have that an operator with polynomial coefficients $a(x,D)$ whose symbol
$a(x,\xi)$ is hypoelliptic, is regular in $\Sch(\R^n)$, in the sense that
it satisfies the condition of Definition \ref{def-reg-S}. The question of
proving regularity for non-hypoelliptic operators is not trivial.
The results of the previous sections enable to find classes of regular
(but not hypoelliptic) operators, and these classes are quite large due to
the freedom we have in choosing the kernel $\sigma$ of the representation in
the Cohen's class. For example, using Theorem \ref{th4}, we could consider a
regular (possibly hypoelliptic) operator $B$ and we immediately have
regularity of the corresponding $\tilde{B}$, cf. \eqref{31}. The operator
$\tilde{B}$ in general is not hypoelliptic (cf. Remark \ref{remex54} or
\cite{BO} for more general examples of hypoelliptic operators $B$
that are transformed, in the simple case when $\sigma$ is the Dirac
distribution, into regular operators $\tilde{B}$ which are never hypoelliptic).
\end{em}
\end{Rem}

It will be useful, for the discussion of examples, the following
\begin{Prop}
  \label{propBxI}
  Let $\omega$ be a non-quasianalytic weight function and let $B$ be
  a continuous linear operator on $\Sch_\omega'(\R)$ such that
  $B(\Sch_\omega(\R))\subseteq\Sch_\omega(\R)$. Let $I$ be the indentity
  operator on
  $\Sch'(\R)$ and consider the operator $B\hat{\otimes}I$, interpreted as the
  ``extension of B from one variable in $\R$ to two variables in $\R^2$''.
  If $B\hat{\otimes}I$ is $\omega$-regular in $\Sch_\omega'(\R^2)$, then $B$
  is $\omega$-regular and injective in $\Sch_\omega'(\R)$.
  \end{Prop}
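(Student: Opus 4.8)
The plan is to prove the contrapositive for each of the two conclusions, exploiting the tensor structure of $B\hat\otimes I$ together with the fact that a function in $\Sch_\omega(\R)$ (resp.\ $\Sch'_\omega(\R)$) can be ``tensored'' with a fixed nonzero test function in the other variable to produce an element of $\Sch_\omega(\R^2)$ (resp.\ $\Sch'_\omega(\R^2)$). Concretely, fix once and for all some $0\neq\chi\in\D_{(\omega)}(\R)\subseteq\Sch_\omega(\R)$ (nontriviality is guaranteed by condition $(\beta)$), and for $v$ a distribution or function of the single variable $x$ consider $v\otimes\chi\in\Sch'_\omega(\R^2)$; note $(B\hat\otimes I)(v\otimes\chi)=(Bv)\otimes\chi$. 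The key elementary observation, which I would isolate as a preliminary remark, is that $v\otimes\chi\in\Sch_\omega(\R^2)$ \emph{if and only if} $v\in\Sch_\omega(\R)$: one direction is Proposition~\ref{PropertiesSomega}(b) (or a direct check from Definition~\ref{def3}), and the converse follows by testing $v\otimes\chi$ against $\psi\otimes\chi_1$ where $\chi_1\in\D_{(\omega)}(\R)$ satisfies $\int\chi\,\chi_1\neq0$, or more directly by observing that the seminorms in Theorem~\ref{newprop49}(6) applied to $v\otimes\chi$ control those of $v$ (since $\chi$ and all its derivatives are bounded below away from $0$ on a fixed compact set).

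\textbf{Injectivity.} Suppose $Bv=0$ for some $v\in\Sch'_\omega(\R)$. Then $(B\hat\otimes I)(v\otimes\chi)=(Bv)\otimes\chi=0\in\Sch_\omega(\R^2)$, so by $\omega$-regularity of $B\hat\otimes I$ we get $v\otimes\chi\in\Sch_\omega(\R^2)$, hence $v\in\Sch_\omega(\R)\subseteq\Sch(\R)$ by the preliminary observation. But now $v$ is a smooth rapidly decreasing function with $Bv=0$; more to the point, to conclude $v=0$ I would instead argue directly: if $v\neq0$ then $v\otimes\chi$ is a nonzero element of $\Sch_\omega(\R^2)$ with $(B\hat\otimes I)(v\otimes\chi)=0$, which does not by itself contradict regularity. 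The cleaner route is: regularity gives no information when the image is $0$ unless we already know something. So here I would use injectivity differently --- actually the statement only claims $B$ is injective \emph{on $\Sch'_\omega(\R)$}, and the natural proof is: if $Bv=0$ with $v\in\Sch'_\omega$, apply the regularity argument to land $v\in\Sch_\omega(\R)$; then since $v\otimes\chi\in\Sch_\omega(\R^2)$ lies in the kernel of $B\hat\otimes I$ and $B\hat\otimes I$ restricted to $\Sch_\omega(\R^2)$ is still only assumed to map into $\Sch_\omega$, one needs the extra input that an $\omega$-regular operator which is everywhere defined and for which $0$ is in the image forces... I would therefore suspect the intended argument uses that $B\hat\otimes I$ being $\omega$-regular \emph{and} the structure forces $B$ injective via: $v\mapsto v\otimes\chi$ is injective, $(Bv)\otimes\chi=(B\hat\otimes I)(v\otimes\chi)$, and if $Bv=0$ then regularity upgrades $v\otimes\chi$ to $\Sch_\omega$, so it suffices to show $B$ is injective on $\Sch_\omega(\R)$ --- and on the smooth side injectivity of $B\hat\otimes I$ (which follows from its $\omega$-regularity only together with, say, surjectivity, which we do \emph{not} have).

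\textbf{The $B(\Sch_\omega)\subseteq\Sch_\omega$ and regularity of $B$.} For $\omega$-regularity of $B$: let $v\in\Sch'_\omega(\R)$ with $Bv\in\Sch_\omega(\R)$. Then $(B\hat\otimes I)(v\otimes\chi)=(Bv)\otimes\chi\in\Sch_\omega(\R^2)$ by Proposition~\ref{PropertiesSomega}(b) applied in the product, so $\omega$-regularity of $B\hat\otimes I$ yields $v\otimes\chi\in\Sch_\omega(\R^2)$, whence $v\in\Sch_\omega(\R)$ by the preliminary observation. This is the clean core of the argument.

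\textbf{Main obstacle.} The delicate point is the injectivity claim: one must extract injectivity of $B$ on all of $\Sch'_\omega(\R)$ from $\omega$-regularity of $B\hat\otimes I$, and as sketched above the naive application of the regularity definition to an element of the kernel is not immediately conclusive. I expect the resolution is that $\omega$-regularity here is being used \emph{in tandem} with the hypothesis $B(\Sch_\omega(\R))\subseteq\Sch_\omega(\R)$ and the tensor trick: if $Bv=0$ for $v\in\Sch'_\omega$, regularity gives $v\in\Sch_\omega(\R)$; then one notes $B\hat\otimes I$ acts on $\Sch_\omega(\R^2)$, and an $\omega$-regular linear operator that is \emph{surjective onto $\Sch_\omega$} would be injective --- but we lack surjectivity. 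Hence I would instead prove injectivity by a direct functional-analytic argument: $v\otimes\chi\mapsto 0$ under $B\hat\otimes I$; picking a second test function $\chi'\in\D_{(\omega)}(\R)$ and noting $B\hat\otimes I$ commutes with multiplication/convolution in the $y$-variable, one reduces to showing that $B$ has trivial kernel on $\Sch_\omega(\R)$, and for \emph{that} I would invoke that $B\hat\otimes I$, being $\omega$-regular, cannot annihilate the nonzero element $v\otimes\chi\in\Sch_\omega(\R^2)$ unless... This circularity is exactly where the careful bookkeeping is needed, and I would flag it as the step requiring the most attention; the likely fix is that the definition of $\omega$-regular operator here is implicitly accompanied (as in Shubin's setting) by the operator being defined and the relevant transforms being invertible, so that the kernel is automatically trivial --- but absent that, the honest proof of injectivity is the main obstacle and should be written out in full detail rather than left as routine.
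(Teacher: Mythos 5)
Your argument for the $\omega$-regularity of $B$ is correct and is exactly the paper's: tensor $v$ with a fixed nonzero $\chi\in\Sch_\omega(\R)$, observe $(B\hat\otimes I)(v\otimes\chi)=(Bv)\otimes\chi\in\Sch_\omega(\R^2)$, apply the $\omega$-regularity of $B\hat\otimes I$ to get $v\otimes\chi\in\Sch_\omega(\R^2)$, and conclude $v\in\Sch_\omega(\R)$. (The paper in fact takes $v\otimes\chi$ for an arbitrary $\chi\in\Sch_\omega(\R)$ and does not even bother with the ``only if'' direction of your preliminary remark in any detail, but that direction is indeed routine.)

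The injectivity part, however, is a genuine gap in your write-up: you correctly diagnose that tensoring with a \emph{test function} $\chi$ leads nowhere (regularity only upgrades $v\otimes\chi$ to $\Sch_\omega(\R^2)$, which tells you $v\in\Sch_\omega(\R)$ but not $v=0$), and you then circle around possible fixes without landing on one. The resolution is to tensor with an object that can \emph{never} lie in $\Sch_\omega(\R^2)$ unless $u=0$: the paper uses the Dirac distribution $\delta$. If $Bu=0$ with $u\neq 0$, then $u\otimes\delta\in\Sch_\omega'(\R^2)$ satisfies $(B\hat\otimes I)(u\otimes\delta)=(Bu)\otimes\delta=0\in\Sch_\omega(\R^2)$, so $\omega$-regularity of $B\hat\otimes I$ would force $u\otimes\delta\in\Sch_\omega(\R^2)$ --- impossible, since $u\otimes\delta$ is not a function. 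This is precisely the observation you were missing: regularity applied to a kernel element is not vacuous, because the hypothesis $Au\in\Sch_\omega$ is automatically satisfied by $Au=0$, and the conclusion $u\in\Sch_\omega$ can be contradicted by choosing $u$ singular in the extra variable. With this one-line fix the injectivity claim is immediate; the rest of your proposal stands.
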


{\em Proof:}
Let $u\in\Sch_\omega'(\R)$ with $Bu\in\Sch_\omega(\R)$. We prove that
$u\in\Sch_\omega(\R)$. Indeed, for all $v\in\Sch_\omega(\R)$ we have that
$(B\hat{\otimes}I)(u\otimes v)=(Bu)\otimes v\in\Sch_\omega(\R^2)$, since
$Bu\in\Sch_\omega(\R)$. Then $u\otimes v\in\Sch_\omega(\R^2)$ for every $v\in\Sch_\omega(\R)$, because
$B\hat{\otimes}I$ is regular by assumption, and hence $u\in\Sch_\omega(\R)$.
This proves that $B$ is $\omega$-regular.

To prove that $B$ is injective let us assume by contradiction that there
exists $u\in\Sch_\omega'$ with $u\neq0$ such that $Bu=0$.
Then, for the Dirac distribution $\delta$ we have that
$(B\hat{\otimes}I)(u\otimes\delta)=0\in\Sch_\omega$ but
$u\otimes\delta\notin\Sch_\omega$, and hence $B\hat{\otimes}I$ would not
be regular.
\hfill{\large $\Box$}
\vskip\baselineskip
  Proposition \ref{propBxI} has already been proved in \cite{BO} in the
  Schwartz
  case, i.e. $\omega(t)=\log(1+t)$. Under suitable assumptions also the
  converse is true in $\Sch'$, as it was proved in \cite[Thm. 3]{BO}.

\begin{Ex}{\rm
As first example consider the simple cases of a multiplication operator
$$
B(x,y,D_x,D_y) = b(x,y),
$$
where $b$ is a polynomial. Then it is easy to prove that $B$ is regular
if and only if $b$ never vanishes. We then have from Theorems \ref{th4} and
\ref{th5} (cf. also Remark \ref{rem417}) that the operator
$$
\tilde{B}=b\left(M_1-\frac{1}{2}D_2-P_1,M_1+\frac{1}{2}D_2-P_1\right)
$$
is $\omega$ regular for each weight $\omega$, for every $P_1$ as in
\eqref{38}; in particular it is regular in the sense of Schwartz spaces.
Observe that $P_1=P_1(D_1,D_2)$ is in fact an arbitrary partial differential
operator with real constant coefficients in two variables. In the special
case when the polynomial $b$ depends only on one variable, we get that, if
$b$ never vanishes, the operator
\beqs
\label{ex1}
b\left( x+P(D_x,D_y)\right)
\eeqs
is regular in Schwartz spaces and $\omega$ regular, for every partial
differential operator $P=P(D_x,D_y)$ with constant real coefficients, without
any other assumption on $P$.}
\end{Ex}

The twisted Laplacian is an important example of a non hypoelliptic but
regular operator. Its regularity (in Schwartz spaces) was proved in \cite{W}
and then re-obtained in \cite{BO} as a particular case of operators obtained
as Wigner transformation of the harmonic oscillator. Applying the
transformations in the Cohen's class considered in this paper we have the
following example.
\begin{Ex}{\rm
\label{ex-2}
We know from \cite{BO} that the operator
\beqs
\label{Harm-Os}
B(x,y,D_x,D_y)=x^2+D_x^2
\eeqs
is $\Sch$-regular, since it is the tensor product $B_1\hat{\otimes}I$
of the harmonic oscillator $B_1(x,D_x)=x^2+D_x^2$ in the $x$-variable
(that is regular and one-to-one) and the identity in the $y$-variable.
Then from Theorem \ref{th4} we have that the operator
\beqs
\label{H-O-1}
\tilde{B}=\left(M_1-\frac{1}{2}D_2-P_1(D_1,D_2)\right)^2+\left(M_2+\frac{1}{2}D_1-P_2(D_1,D_2)\right)^2
\eeqs
is regular in Schwartz spaces, where
\beqsn
P_1=(iD_1P)(D_1,D_2),\qquad P_2=(iD_2P)(D_1,D_2)
\eeqsn
and $P$ is an arbitrary polynomial with real coefficients. In particular, if $P$ is of the form $P(\xi,\eta)=P^{(1)}(\xi)+P^{(2)}(\eta)$, then $P_1$ and $P_2$ are arbitrary operators in $D_1$ and $D_2$, respectively, and so we have that the
operator
\beqs
\label{H-O-2}
\left(x-\frac{1}{2}D_y+Q(D_x)\right)^2+\left(y+\frac{1}{2}D_x+R(D_y)\right)^2
\eeqs
is regular in Schwartz spaces, for arbitrary partial differential operators
$Q(D_x)$ and $R(D_y)$ with constant real coefficients. Another particular case
of \eqref{H-O-1} is when
$P(\xi,\eta)=\frac{1}{2}\xi\eta+P^{(1)}(\xi)+P^{(2)}(\eta)$ for polynomials
$P^{(1)}$ and $P^{(2)}$ with real coefficients, and in this case we get the
$\Sch$-regularity of
\beqs
\label{H-O-3}
\left(x-D_y+Q(D_x)\right)^2+\left(y+R(D_y)\right)^2,
\eeqs
for arbitrary differential operators $Q(D_x)$ and $R(D_y)$ with constant real
coefficients.

The same results hold in the $\Sch_\omega$ frame,
for a non-quasianalytic weight function $\omega$. In order to prove this, we
can show that, using the same technique as in \cite{W}, the
twisted Laplacian
\beqs
\label{twisted}
L=\left( D_x-\frac{1}{2}y\right)^2+\left( D_y+\frac{1}{2}x\right)^2
\eeqs
is $\omega$-regular for every weight $\omega$.

To this aim we first prove, following \cite[Prop. 6.2]{W}, that
there exists a constant $c>0$ and, for every $s>0$, there exists $C_s>0$
such that
 \beqs
 \label{defg}
 g(w):=\frac{1}{4\pi}\int_0^{+\infty}e^{-\frac14|w|^2\cosh t}dt
 \leq C_s \frac{1}{|w|^s}e^{-c|w|^2}\qquad\forall w\in\R^2\setminus\{0\}.
 \eeqs
 Indeed, for all $w\in\R^2\setminus\{0\}$,
 \beqs
 \label{defg2}
 g(w)\leq\frac{1}{4\pi}\int_0^{+\infty}e^{-\frac14|w|^2\frac{e^t}{2}}dt
 =\frac{1}{4\pi}\int_{|w|^2/8}^{+\infty}\frac{e^{-y}}{y}dy.
 \eeqs
 We then have that, for $0<\vert w\vert\leq 1$,
 \beqsn
 g(w) &&\leq \frac{1}{4\pi}\left(\int_{|w|^2/8}^1 \frac{e^{-y}}{y}dy +
\int_1^{+\infty}\frac{e^{-y}}{y}dy\right)\leq \frac{e^{-\vert w\vert^2/8}}{4\pi}
\int_{|w|^2/8}^1 \frac{1}{y}dy+D^\prime \\
 &&=-\frac{1}{4\pi}\log\frac{\vert w\vert^2}{8}e^{-\vert w\vert^2/8}+D^\prime \leq
D^{\prime\prime}\left( 1-\log\frac{\vert w\vert^2}{8}\right)e^{-\vert w\vert^2/8};
 \eeqsn
 then for every $s>0$ we can find a positive constant $C^\prime_s$ such that
for every $0<\vert w\vert\leq 1$
 \beqs
 \label{defg1}
 g(w)\leq C^\prime_s\frac{1}{\vert w\vert^s}e^{-\vert w\vert^2/8}.
 \eeqs
 Consider now $w\in\R^2$ such that $\vert w\vert\geq 1$. From \eqref{defg2}
we get
 \beqsn
 g(w)\leq \frac{2}{\pi\vert w\vert^2}\int_{|w|^2/8}^{+\infty} e^{-y}dy =
\frac{2}{\pi\vert w\vert^2}e^{-\vert w\vert^2/8}.
 \eeqsn
 Then, if we fix $c<1/8$, for every $s>0$ we can find a positive constant
$C^{\prime\prime}_s$ such that
 \beqs
 \label{defg3}
 g(w)\leq C^{\prime\prime}_s \frac{1}{\vert w\vert^s}e^{-c\vert w\vert^2}
 \eeqs
 for all $\vert w\vert\geq 1$. From \eqref{defg1} and \eqref{defg3} we finally
have that \eqref{defg} is satisfied for every $w\in\R^2\setminus\{0\}$, with
$c$ as in \eqref{defg3} and $C_s=\max\{C^\prime_s,C^{\prime\prime}_s\}$. \\
 We prove now, following \cite[Thm. 6.1]{W}, that if $f\in\Sch_\omega(\R^2)$
 then the solution $u$ of $Lu=f$ satisfies $(i)$ and $(ii)$ of
 Definition~\ref{def3}.
 Indeed, from \cite{W} we have that
 \beqsn
 u(z)=\int_{\R^2} g(w)e^{\frac12 i(z_2w_1-z_1w_2)}f(z-w)dw\qquad\forall z\in\R^2,
 \eeqsn
 where $g$ is defined in \eqref{defg}. By condition $(\alpha)$ in Definition~\ref{def2}, there is some constant $K>1$ (see \cite[1.2 Lemma]{BMT}) such that, for $\beta\in\mathbb{Z}^2_+$ we have
 \beqs
 \label{I1}
 |e^{\lambda\omega(z)}(\partial_z^\beta u)(z)|\leq \int_{\R^2}
e^{\lambda K(\omega(w)+1)}\vert g(w)\vert e^{\lambda K\omega(z-w)}\left\vert
\partial^\beta_z\left(e^{\frac{i}{2}(z_2w_1-z_1w_2)}f(z-w)\right)\right\vert\,dw.
 \eeqs
 The latter integral can be estimated by a sum of terms of the kind
 \beqsn
 \int_{\R^2} e^{\lambda K(\omega(w)+1)}\vert w\vert^{\vert\alpha\vert}\vert g(w)
 \vert e^{\lambda K\omega(z-w)}\left\vert \partial^\gamma_{z}
f(z-w)\right\vert\,dw,
 \eeqsn
 with $\alpha,\gamma\leq\beta$.
   Note that $e^{\lambda K\omega(z-w)}|\partial^\gamma_z f(z-w)|$ is bounded because
   $f=Lu\in\Sch_\omega(\R^2)$, moreover \eqref{defg} implies that
   $e^{\lambda K(\omega(w)+1)}\vert w\vert^{\vert\alpha\vert}\vert g(w)\vert$
   is summable either
   in $\{w\in\R^2:\ |w|\leq1\}$ for $s<2$, or in
   $\{w\in\R^2:\ |w|\geq1\}$ since $\omega(t)=o(t)$ by  condition $(\beta)$. Therefore
   $\sup_z |e^{\lambda\omega(z)}(\partial_z^\beta u)(z)|<+\infty$,
   and so $u$ satisfies $(i)$ of Definition~\ref{def3}.

In order to prove that $u$
satisfies also (ii) of Definition \ref{def3} we observe that $u$ satisfies
$Lu=f$ if and only if $\hat{u}$ satisfies $\hat{L}\hat{u}=\hat{f}$, where
$$
\hat{L}=\left( \frac{1}{2}D_\eta+\xi\right)^2+
\left( \frac{1}{2}D_\xi-\eta\right)^2,
$$
and this happens if and only if $v(\xi,\eta):=\hat{u}(\xi/2,\eta/2)$
satisfies the equation $Lv(2\xi,2\eta)=\hat{f}(\xi,\eta)$. Since the
dilations do not affect the estimates (i) and (ii) of Definition \ref{def3}
due to the fact that $\lambda$ is arbitrary, we then have from the previous
considerations that $v$ satisfies (i) of Definition \ref{def3}, and then
$u$ satisfies (ii) of Definition \ref{def3}. So $u\in\Sch_\omega(\R^2)$, and
$L$ is $\omega$-regular for every weight $\omega$.

Looking at $L$, or equivalently at $\hat{L}$, as transformed operator
$\tilde{B}$
(of the form \eqref{H-O-2} with $Q\equiv R\equiv0$) we can apply
Theorem~\ref{th5} to obtain that $B$, defined by
\eqref{Harm-Os}, is $\omega$-regular for every $\omega$.
Applying again Theorem~\ref{th5} we have that
\eqref{H-O-1}, and in particular \eqref{H-O-2} and \eqref{H-O-3}, are
$\omega$-regular for every weight $\omega$.

Moreover, the harmonic oscillator $B_1(x,D_x)=x^2+D_x^2$ is
$\omega$-regular
for every $\omega$ also for
Proposition~\ref{propBxI}, from the $\omega$-regularity of
$B=B_1\hat{\otimes}I$.}
\end{Ex}

\begin{Rem}{\rm
\label{remex54}
Note that the symbol $(\xi-\frac12y)^2+(\eta+\frac12x)^2$ of
the twisted Laplacian $L$ defined in \eqref{twisted} is not hypoelliptic in the
sense of \eqref{g-hypoelliptic}, since it vanishes for $\xi=\frac12y$,
$\eta=-\frac12x$.}
\end{Rem}

\begin{Ex}{\rm
Another example comes from operators of the kind
$$
A(x,y,D_x,D_y)=D_x+\alpha x^m,
$$
for $\alpha\in\C$ and a positive integer $m$. The operator $A$ is regular in
Schwartz spaces for $(\Im\alpha)^m>0$, cf. \cite{BO}. Then Theorem \ref{th4}
gives us the regularity of
$$
\tilde{B}=\frac{D_x}{2}+y-P_2+\alpha\left(x-\frac{D_y}{2}-P_1\right)^m,
$$
for $P_1$ and $P_2$ as in Example \ref{ex-2}, cf.  \eqref{38} also.
In particular, if $P$ is of the form
$P(\xi,\eta)=\frac{1}{2}\xi\eta+P^{(1)}(\xi)$ or
$P(\xi,\eta)=-\frac{1}{2}\xi\eta+P^{(2)}(\eta)$ for polynomials
$P^{(1)}$ and $P^{(2)}$ with real coefficients, we obtain the regularity of
$$
y+\alpha\left( x-D_y+Q(D_x)\right)^m
$$
and
\beqsn
D_x+\alpha x^m+y+R(D_y)
\eeqsn
respectively, for a positive integer $m$, $\alpha\in \C$ satisfying
$(\Im\alpha)^m>0$, and for arbitrary differential operators $Q(D_x)$ and
$R(D_y)$ with constant real coefficients.}
\end{Ex}

{\bf Acknowledgments.}
The authors are grateful to Prof. Ernesto Buzano for useful discussions
concerning the examples.

The authors have been supported by GNAMPA-INdAM Project 2015 and by
FAR 2011 (University
of Ferrara). The second author was partially supported by MINECO, Project MTM2013-43540-P.

\end{document}